\title{Operads from posets and Koszul duality}
\keywords{Tree; Rewrite rule; Poset; Operad; Koszul duality; Koszul operad.}
\subjclass[2010]{05E99, 05C05, 06A11, 18D50.}
\date{\today}
\author{Samuele Giraudo}
\address{Laboratoire d'Informatique Gaspard-Monge, Université Paris-Est
    Marne-la-Vallée, 5 boulevard Descartes, Champs-sur-Marne,
    77454 Marne-la-Vallée cedex 2, France}
\email{samuele.giraudo@u-pem.fr}
\numberwithin{equation}{subsection}
\renewcommand{\leq}{\leqslant}
\renewcommand{\geq}{\geqslant}
\newtheorem{Theorem}{Theorem}[subsection]
\newtheorem{Proposition}[Theorem]{Proposition}
\newtheorem{Lemma}[Theorem]{Lemma}
\tikzstyle{Vertex}=[circle,draw=LimeGreen!80,fill=LimeGreen!8,
\tikzstyle{Node}=[Vertex,draw=RoyalBlue!80,fill=RoyalBlue!8,inner sep=1.5pt]
\tikzstyle{Leaf}=[rectangle,draw=Black!70,fill=Black!16,
\tikzstyle{Edge}=[Maroon!80,cap=round,line width=1pt]
\tikzstyle{Mark1}=[draw=BrickRed!80,fill=BrickRed!8]
\tikzstyle{Mark2}=[draw=BurntOrange!80,fill=BurntOrange!8]
\tikzstyle{EdgeRew}=[->,RedOrange!80,cap=round,thick]
\newcommand{\N}{\mathbb{N}}
\newcommand{\K}{\mathbb{K}}
\newcommand{\Aca}{\mathcal{A}}
\newcommand{\Cca}{\mathcal{C}}
\newcommand{\Eca}{\mathcal{E}}
\newcommand{\Fca}{\mathcal{F}}
\newcommand{\Hca}{\mathcal{H}}
\newcommand{\Ica}{\mathcal{I}}
\newcommand{\Nca}{\mathcal{N}}
\newcommand{\Oca}{\mathcal{O}}
\newcommand{\Pca}{\mathcal{Q}}
\newcommand{\Sca}{\mathcal{S}}
\newcommand{\Xbb}{\mathbb{X}}
\newcommand{\Cfr}{\mathfrak{c}}
\newcommand{\Tfr}{\mathfrak{t}}
\newcommand{\Sfr}{\mathfrak{s}}
\newcommand{\Rfr}{\mathfrak{r}}
\newcommand{\La}{\mathtt{a}}
\newcommand{\Lb}{\mathtt{b}}
\newcommand{\Lc}{\mathtt{c}}
\newcommand{\Min}{\uparrow}
\newcommand{\As}{\mathsf{As}}
\newcommand{\Op}{\star}
\newcommand{\OpA}{\square}
\newcommand{\OpB}{\triangle}
\newcommand{\OpDual}{\bar \Op}
\newcommand{\OpDualA}{\bar \OpA}
\newcommand{\OpDualB}{\bar \OpB}
\newcommand{\Free}{\mathrm{Free}}
\newcommand{\FreeGen}{\mathfrak{G}}
\newcommand{\FreeRel}{\mathfrak{R}}
\newcommand{\Ord}{\preccurlyeq}
\newcommand{\OrdStrict}{\prec}
\newcommand{\Rew}{\to}
\newcommand{\RewTrans}{\overset{*}{\rightarrow}}
\newcommand{\RewEquiv}{\overset{*}{\leftrightarrow}}
\newcommand{\RewS}{\rightarrowtriangle}
\newcommand{\NbInterv}{\mathrm{int}}
\newcommand{\Hilbert}{\Hca}
\newcommand{\Alg}{\Aca}
\newcommand{\Unit}{\mathds{1}}
\newcommand{\UnitSet}{\Eca}
\newcommand{\Ideal}{\Ica}
\newcommand{\ASchr}{\mathsf{ASchr}}
\newcommand{\ASchrMap}{\mathrm{s}}
\newcommand{\Corolla}{\Cfr}
\newcommand{\NormalF}{\Nca}
\newcommand{\SetASchr}{\Sca}
\newcommand{\Pos}{\mathrm{pos}}
\newcommand{\TwoAs}{\mathit{2as}}
\newcommand{\AntiForestPattern}{%
\begin{tikzpicture}
        [baseline=(current bounding box.center),xscale=.2,yscale=.25]
    \node[Vertex](1)at(0,0){};
    \node[Vertex](2)at(-1,1){};
    \node[Vertex](3)at(1,1){};
    \draw[Edge](1)--(2);
    \draw[Edge](1)--(3);
\end{tikzpicture}}
\newcommand{\LinePattern}{%
\begin{tikzpicture}[baseline=(current bounding box.center),yscale=.28]
    \node[Vertex](1)at(0,0){};
    \node[Vertex](2)at(0,-1){};
    \draw[Edge](1)--(2);
\end{tikzpicture}}
\newcommand{\OnePoset}{%
\begin{tikzpicture}[baseline=(current bounding box.center)]
    \node[Vertex](1)at(0,0){};
\end{tikzpicture}}
\newcommand{\PosetEmpty}{\emptyset}
\newcommand{\Hide}[1]{}
\newcommand{\Sloane}[1]{\href{http://oeis.org/#1}{{\bf #1}}}
\begin{document}

\maketitle

\begin{abstract}
    We introduce a functor $\As$ from the category of posets to the
    category of nonsymmetric binary and quadratic operads, establishing
    a new connection between these two categories. Each operad obtained
    by the construction $\As$ provides a generalization of the
    associative operad because all of its generating operations are
    associative. This construction has a very singular property: the
    operads obtained from $\As$ are almost never basic. Besides, the
    properties of the obtained operads, such as Koszulity, basicity,
    associative elements, realization, and dimensions, depend on
    combinatorial properties of the starting posets. Among others, we
    show that the property of being a forest for the Hasse diagram of
    the starting poset implies that the obtained operad is Koszul.
    Moreover, we show that the construction $\As$ restricted to a certain
    family of posets with Hasse diagrams satisfying some combinatorial
    properties is closed under Koszul duality.
\end{abstract}

\tableofcontents

\section*{Introduction}
Very recent examples show that the theory of operads, since its
introduction~\cite{May72,BV73} in the 1970s, is developing year after
year many fruitful connections with combinatorics. In fact, there are at
least three important instances to warrant this assertion. First, the
Hilbert series of a Koszul operad and its Koszul dual are related by an
inversion operation on series~\cite{GK94,BB10}, a classical combinatorial
operation. Moreover, the Koszulity property for operads is strongly
related to the theory of rewrite rules on trees, this last providing a
sufficient combinatorial condition to establish that an operad is
Koszul~\cite{Hof10,DK10,LV12}. Finally, another strategy to prove that
an operad is Koszul consists in constructing a family of posets from an
operad~\cite{MY91}, so that the Koszulity of the considered operad is a
consequence of a combinatorial property of the posets of the obtained
family~\cite{Val07} (see also~\cite{LV12}).
\medskip

Moreover, by endowing a set of combinatorial objects with an operad
structure, the theory of operads helps to establish combinatorial
properties of its objects and leads to new ways of understanding it.
Indeed, since operads are algebraic structures modeling the composition
of operators, operads on combinatorial objects allow to decompose objects
into elementary pieces. This may potentially bring and highlight
combinatorial properties. For instance, several operads on various sorts
of trees emphasize some properties of trees~\cite{Cha08}, the definition
of an operad structure on a sort of noncrossing configurations in
polygons leads to a formula for their enumeration~\cite{CG14}, and the
definition of several operads on combinatorial objects such as Motzkin
paths and directed animals leads to see these as gluings of small
objects~\cite{Gir15a} and to derive properties.
\medskip

On the other side, purely combinatorial observations lead to algebraic
properties on operads. For instance, computations in the dendriform
operad, introduced by Loday~\cite{Lod01}, can be performed by
considering intervals of the Tamari order~\cite{Tam62} (see~\cite{LR02}).
Furthermore, pre-Lie algebras, introduced independently by
Vinberg~\cite{Vin63} and by Gerstenhaber~\cite{Ger63}, form an important
class of algebras appearing among others in the study of homogeneous cones
and affine manifolds. The pre-Lie operad, defined by Chapoton and
Livernet~\cite{CL01}, encodes the category of pre-Lie algebras. This
operad involves rooted trees and its partial composition is described by
means of tree grafts. The existence of this operad and of its combinatorial
description in terms of rooted trees lead to the construction of free
pre-Lie algebras.
\medskip

This work is devoted to enrich these connections between operads and
combinatorics by establishing a new link between posets and nonsymmetric
operads by means of a construction associating a nonsymmetric operad
$\As(\Pca)$, called {\em $\Pca$-associative operad}, with any finite
poset~$\Pca$. This construction is a functor $\As$ from the category of
finite posets to the category of nonsymmetric binary and quadratic
operads. The will to generalize two families of operads Koszul dual to
each other, constructed in a previous work~\cite{Gir15b}, is the first
impetus of this paper. The operads of these families, called
multiassociative operads and dual multiassociative operads, are
respectively denoted by $\As_\gamma$ and $\As_\gamma^!$, and depend on a
nonnegative integer parameter $\gamma$. In this present work, we retrieve
$\As_\gamma$ by applying the construction $\As$ on the total order on a
set of $\gamma$ elements and we retrieve $\As_\gamma^!$ by applying the
construction $\As$ on the trivial order on the same set.
\medskip

Let us describe some main properties of $\As$. First, each operad
obtained by our construction provides a generalization of the
associative operad since all its generating operations are associative.
Besides, many combinatorial properties of the starting poset $\Pca$ lead
to algebraic properties for $\As(\Pca)$ (see Table~\ref{tab:properties}).
\begin{table}[ht]
    \centering
    \begin{tabular}{c|c|c}
        Properties of the poset $\Pca$ & Properties of the operad $\As(\Pca)$
                & Statement \\ \hline \hline
        None & Binary and quadratic
                & Definition, Section~\ref {subsubsec:definition_as}
                \\ \hline
        Forest & Koszul & Theorem~\ref{thm:koszulity} \\ \hline
        Thin forest & Closed under Koszul duality
                & Theorem~\ref{thm:isomorphism_thin_forests_dual_operad}
                \\ \hline
        Trivial & Basic & Proposition~\ref{prop:basic_operad}
    \end{tabular}
    \bigskip
    \caption{Summary of the properties satisfied by a poset $\Pca$
    implying properties for the operad $\As(\Pca)$. Note that any trivial
    poset is also a thin forest poset, and that a thin forest poset is
    also a forest poset. In particular, if $\Pca$ is a trivial poset,
    $\As(\Pca)$ has all properties mentioned in the middle column.}
    \label{tab:properties}
\end{table}
For instance, when $\Pca$ is a forest (with the meaning that no element
of $\Pca$ covers two different elements), $\As(\Pca)$ is a Koszul operad.
Moreover, when $\Pca$ is not a trivial poset, $\As(\Pca)$ is not a basic
operad (property defined in~\cite{Val07}). This last property seems to
be interesting since almost all common operads are basic, such as
the associative operad or the diassociative operad~\cite{Lod01} (see
additionally~\cite{Zin12} and the references therein for a census of the
most famous operads and their main properties). This gives to our
construction a very unique flavor.
\medskip

The further study of the operads obtained by the construction $\As$ is
driven by computer exploration. Indeed, computer experiments bring us
the observation that some operads obtained by the construction $\As$ are
Koszul dual to each other. This observation raises several questions.
The first one consists in describing the family of posets such that the
construction $\As$ restrained to this family is closed under Koszul duality.
The second one consists in defining an operation $^\perp$ on this family
of posets such that for any poset $\Pca$ of this family,
$\As(\Pca^\perp)$ is isomorphic to the Koszul dual $\As(\Pca)^!$ of
$\As(\Pca)$. The last one relies on an expression of an explicit
isomorphism between $\As(\Pca)^!$ and $\As(\Pca^\perp)$. We answer all
these questions in this work, forming its main result.
\medskip

This paper and the presented results are organized as follows.
Section~\ref{sec:definitions_lemmas} contains basic definitions about
posets, rewrite rules on trees, and operads. We state therein some
lemmas used in the sequel about these objects. Most important are
Lemma~\ref{lem:confluent_few_nodes} establishing a criterion to prove
that a rewrite rule is confluent, and
Lemma~\ref{lem:koszulity_criterion_pbw} establishing a criterion to
prove that an operad is Koszul. These results are already known: the
first one belongs to folklore and we provide a proof for it, and the
second one is a consequence of works of Dotsenko and
Khoroshkin~\cite{DK10}, and Hoffbeck~\cite{Hof10}.
\medskip

Section~\ref{sec:posets_to_operads} is concerned with the description of
the construction $\As$ and the first general properties of the obtained
operads. We begin by showing that $\As$ is functorial
(Theorem~\ref{thm:as_poset_functor}). Next, for any poset $\Pca$, we
describe the associative elements of $\As(\Pca)$
(Proposition~\ref{prop:associative_elements}) and show that $\As(\Pca)$
is basic if and only if $\Pca$ is a trivial poset
(Proposition~\ref{prop:basic_operad}). We end this section by describing
algebras over the operads $\As(\Pca)$, called
{\em $\Pca$-associative algebras}, and by introducing a notion of units
for these algebras accompanied with certain properties
(Proposition~\ref{prop:units}).
\medskip

In Section~\ref{sec:forest_posets}, we focus on the case where the poset
$\Pca$ as input of the construction $\As$ is a forest poset. We show
that in this case, $\As(\Pca)$ is a Koszul operad
(Theorem~\ref{thm:koszulity}) and derive some consequences. In
particular, we give a functional equation for the Hilbert series of
$\As(\Pca)$ (Proposition~\ref{prop:hilbert_series}) and provide a
combinatorial realization of $\As(\Pca)$ involving Schröder
trees~\cite{Sta11} endowed with a labeling satisfying some constraints
(Theorem~\ref{thm:realization}). We next use this realization of
$\As(\Pca)$ to describe free algebras over $\As(\Pca)$ over one generator.
We end this section by establishing two presentations for the Koszul
dual of $\As(\Pca)$ (Propositions~\ref{prop:presentation_koszul_dual}
and~\ref{prop:alternative_presentation_koszul_dual}).
\medskip

This work ends by introducing in Section~\ref{sec:thin_forest_posets}
a class of posets whose elements are called {\em thin forest posets}.
The construction $\As$ restricted to this class of posets has the
property to be closed under Koszul duality. This property relies upon
an involution $^\perp$ on thin forest posets. We show
(Theorem~\ref{thm:isomorphism_thin_forests_dual_operad}) that if $\Pca$
is a thin forest poset, there is a thin forest poset $\Pca^\perp$ such
that $\As(\Pca)^!$ and $\As(\Pca^\perp)$ are isomorphic. This addresses
the questions previously raised.
\bigskip

{\it Notations and general conventions.}
All the algebraic structures of this article have a field of
characteristic zero $\K$ as ground field. For any integers $a$ and $c$,
$[a, c]$ denotes the set $\{b \in \N : a \leq b \leq c\}$ and $[n]$, the
set $[1, n]$. The cardinality of a finite set $S$ is denoted by $\# S$.
If $u$ is a word, its letters are indexed from left to right from $1$ to
its length $|u|$ and for any $i \in [|u|]$, $u_i$ is the letter of $u$ at
position $i$. If $\Op$ is a generator of an operad $\Oca$, we denote by
$\OpDual$ the associated generator in the Koszul dual of $\Oca$.
\medskip

{\it Acknowledgements.} The author warmly thanks the referees for their
very careful reading and their suggestions, improving the quality of the
paper.
\medskip

\section{Elementary definitions and lemmas}%
\label{sec:definitions_lemmas}
In this preliminary section, basic notions about posets, trees, and
operads are recalled. In particular, we describe the notions of patterns
in posets and pattern avoidance, rewrite rules on trees, and Koszulity
of operads. Besides, this section contains four lemmas, used further in
the text.
\medskip

\subsection{Posets and poset patterns}
Unless otherwise specified, we shall use in the sequel the standard
terminology about posets~\cite{Sta11}. For the sake of completeness,
let us recall the most important definitions and set our notations.
\medskip

\subsubsection{Posets}
Let $(\Pca, \Ord_\Pca)$ be a poset. We denote by $\OrdStrict_\Pca$ the
strict order relation obtained from $\Ord_\Pca$ and by $\not \Ord_\Pca$
the complementary binary relation of $\Ord_\Pca$. We denote by
$\Min_\Pca$ the binary partial operation $\min$ on $\Pca$. All posets
considered in this work are finite, so that $\Pca$ is a finite set and
has a cardinality, called {\em size} of $\Pca$.
\medskip

A {\em chain} of $\Pca$ is a subset $\{c_1, \dots, c_\ell\}$ of $\Pca$
such that $c_i \OrdStrict_\Pca c_{i + 1}$ for all $i \in [\ell - 1]$. An
{\em antichain} of $\Pca$ is a subset $\{a_1, \dots, a_\ell\}$ of $\Pca$
such that $a_i \Ord_\Pca a_j$ implies $i = j$ for all $i, j \in [\ell]$.
If $a$ and $c$ are two elements of $\Pca$ such that $a \Ord_\Pca c$,
the {\em interval} $[a, c]$ is the set
$\{b \in \Pca : a \Ord_\Pca b \Ord_\Pca c\}$. The number of intervals of
$\Pca$ is denoted by $\NbInterv(\Pca)$. An {\em order filter} of $\Pca$
is a subset $\Fca$ of $\Pca$ such that for all $a \in \Fca$ and all
$b \in \Pca$ satisfying $a \Ord_\Pca b$, $b$ is in $\Fca$. A
{\em subposet} of $\Pca$ is a subset $\Pca'$ of $\Pca$ endowed with the
order relation of $\Pca$ restricted to $\Pca'$.
\medskip

We shall define posets $\Pca$ by drawing Hasse diagrams, where minimal
elements are drawn uppermost and vertices are labeled by the elements
of $\Pca$. For instance, the Hasse diagram
\begin{equation}
    \begin{tikzpicture}
        [baseline=(current bounding box.center),xscale=.4,yscale=.4]
        \node[Vertex](1)at(-1.5,1){\begin{math}1\end{math}};
        \node[Vertex](2)at(1,1){\begin{math}2\end{math}};
        \node[Vertex](3)at(0,0){\begin{math}3\end{math}};
        \node[Vertex](4)at(2,0){\begin{math}4\end{math}};
        \node[Vertex](5)at(1,-1){\begin{math}5\end{math}};
        \node[Vertex](6)at(3,-1){\begin{math}6\end{math}};
        \draw[Edge](2)--(3);
        \draw[Edge](2)--(4);
        \draw[Edge](3)--(5);
        \draw[Edge](4)--(5);
        \draw[Edge](4)--(6);
    \end{tikzpicture}\,,
\end{equation}
denotes the poset $([6], \Ord)$ satisfying among others $3 \Ord 5$
and~$2 \Ord 6$.
\medskip

Let $(\Pca_1, \Ord_{\Pca_1})$ and $(\Pca_2, \Ord_{\Pca_1})$ be two
posets. A {\em morphism of posets} is a map $\phi : \Pca_1 \to \Pca_2$
such that $x \Ord_{\Pca_1} y$ implies $\phi(x) \Ord_{\Pca_2} \phi(y)$.
\medskip

\begin{Lemma} \label{lem:morphism_posets_min}
    Let $\Pca_1$ and $\Pca_2$ be two posets and
    $\phi : \Pca_1 \to \Pca_2$ be a morphism of posets. Then, for all
    comparable elements $a$ and $b$ of $\Pca_1$,
    \begin{equation}
        \phi\left(a \Min_{\Pca_1} b\right) =
        \phi(a) \Min_{\Pca_2} \phi(b).
    \end{equation}
\end{Lemma}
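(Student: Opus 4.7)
The plan is a straightforward case analysis exploiting the comparability hypothesis. Since $a$ and $b$ are assumed comparable in $\Pca_1$, I would first observe that without loss of generality one may suppose $a \Ord_{\Pca_1} b$, the symmetric case $b \Ord_{\Pca_1} a$ being identical by swapping the roles of $a$ and $b$. Under this assumption, by the very definition of $\Min_{\Pca_1}$ as the partial $\min$ operation, $a \Min_{\Pca_1} b = a$, so the left-hand side of the claimed equation reduces to $\phi(a)$.

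For the right-hand side, I would invoke the defining property of a morphism of posets: from $a \Ord_{\Pca_1} b$ we deduce $\phi(a) \Ord_{\Pca_2} \phi(b)$. In particular $\phi(a)$ and $\phi(b)$ are comparable in $\Pca_2$, so $\phi(a) \Min_{\Pca_2} \phi(b)$ is defined and equals $\phi(a)$. This gives the desired equality
\begin{equation}
    \phi(a \Min_{\Pca_1} b) = \phi(a) = \phi(a) \Min_{\Pca_2} \phi(b).
\end{equation}

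There is no real obstacle here: the lemma is essentially a bookkeeping statement saying that morphisms preserve the binary $\min$ on comparable pairs, which is an immediate consequence of the order-preservation axiom together with the fact that $\Min$ only sees the smaller element when both inputs are comparable. The one point worth flagging in the write-up is that $\Min_{\Pca_1}$ and $\Min_{\Pca_2}$ are only partial operations; the argument must therefore establish definedness of both sides before asserting their equality, and this is precisely what the comparability hypothesis on $a$ and $b$ in $\Pca_1$, combined with the morphism property to transport comparability to $\Pca_2$, guarantees.
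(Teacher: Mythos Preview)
Your proof is correct; it is precisely the natural case split on which of $a$ or $b$ is smaller, followed by an application of the order-preservation axiom. The paper itself omits the proof entirely, calling the lemma an ``easy property,'' so there is nothing further to compare.
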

\medskip

This easy property pointed out by Lemma~\ref{lem:morphism_posets_min}
intervenes later to show that our construction from posets to operads is
functorial.
\medskip

\subsubsection{Poset patterns}
Let $(\Pca_1, \Ord_{\Pca_1})$ and $(\Pca_2, \Ord_{\Pca_1})$ be two
posets. We say that $\Pca_1$ admits an {\em occurrence} of (the
{\em pattern}) $\Pca_2$ if there is an isomorphism of posets
$\phi : \Pca_1' \to \Pca_2$ where $\Pca_1'$ is a subposet of $\Pca_1$.
Conversely, we say that $\Pca_1$ {\em avoids} $\Pca_2$ if there is no
occurrence of $\Pca_2$ in~$\Pca_1$. Since only the isomorphism class of
a pattern is important to say if a poset admits an occurrence of it, we
shall draw unlabeled Hasse diagrams to specify patterns. For instance,
the poset
\begin{equation}
    \Pca :=
    \begin{tikzpicture}
        [baseline=(current bounding box.center),xscale=.4,yscale=.4]
        \node[Vertex](1)at(0,0){\begin{math}1\end{math}};
        \node[Vertex](2)at(-1,-1){\begin{math}2\end{math}};
        \node[Vertex](3)at(-1,-2){\begin{math}3\end{math}};
        \node[Vertex](4)at(1,-1.5){\begin{math}4\end{math}};
        \node[Vertex](5)at(0,-3){\begin{math}5\end{math}};
        \draw[Edge](1)--(2);
        \draw[Edge](2)--(3);
        \draw[Edge](3)--(5);
        \draw[Edge](1)--(4);
        \draw[Edge](4)--(5);
    \end{tikzpicture}
\end{equation}
admits an occurrence of the pattern
\begin{equation}
    \begin{tikzpicture}
        [baseline=(current bounding box.center),xscale=.4,yscale=.4]
        \node[Vertex](1)at(1,1){};
        \node[Vertex](2)at(0,0){};
        \node[Vertex](3)at(2,0){};
        \node[Vertex](4)at(1,-1){};
        \draw[Edge](1)--(2);
        \draw[Edge](1)--(3);
        \draw[Edge](2)--(4);
        \draw[Edge](3)--(4);
    \end{tikzpicture}
\end{equation}
since $1 \Ord_\Pca  3$, $1 \Ord_\Pca 4$, $3 \Ord_\Pca 5$, and
$4 \Ord_\Pca 5$. Moreover, $\Pca$ avoids the pattern
\begin{equation}
    \begin{tikzpicture}
        [baseline=(current bounding box.center),xscale=.4,yscale=.4]
        \node[Vertex](1)at(0,0){};
        \node[Vertex](2)at(1,0){};
        \node[Vertex](3)at(2,0){};
    \end{tikzpicture}
\end{equation}
since $\Pca$ has no antichain of cardinality~$3$.
\medskip

We call {\em forest poset} any poset avoiding the pattern
$\AntiForestPattern$. In other words, a forest poset is a poset for
which its Hasse diagram is a forest of rooted trees (where roots are
minimal elements).
\medskip

\begin{Lemma} \label{lem:forest_poset_max_three_elements}
    Let $\Pca$ be a forest poset and $a$, $b$, and $c$ be three elements
    of $\Pca$ such that $a$ and $b$ are comparable and $b$ and $c$ are
    comparable. Then, $a \Min_\Pca b \Min_\Pca c$ is a well-defined
    element of~$\Pca$.
\end{Lemma}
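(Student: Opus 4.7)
\medskip

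\textbf{Proof plan.} The strategy is to perform a case analysis on the relative order of the pairs $(a,b)$ and $(b,c)$, show that in each of the four resulting cases the set $\{a,b,c\}$ admits a minimum element in $\Pca$, and then conclude that any parenthesization of $a \Min_\Pca b \Min_\Pca c$ is defined and evaluates to this minimum. The forest hypothesis will be used only in one case; the others are immediate from transitivity.

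First I recall what the forbidden pattern $\AntiForestPattern$ means at the level of posets: since minimal elements are drawn at the top, an occurrence in $\Pca$ is a triple of elements $x$, $y_1$, $y_2$ with $y_1 \OrdStrict_\Pca x$, $y_2 \OrdStrict_\Pca x$, and $y_1$, $y_2$ incomparable. Thus a forest poset is a poset in which any two elements that are both below a common element must themselves be comparable.

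I would then split into the four sub-cases produced by the comparabilities of $(a,b)$ and $(b,c)$. If $a \Ord_\Pca b \Ord_\Pca c$, then $a$ is the minimum; symmetrically, if $c \Ord_\Pca b \Ord_\Pca a$, then $c$ is the minimum. If $b \Ord_\Pca a$ and $b \Ord_\Pca c$, then $b$ is the minimum. The only non-trivial case is $a \Ord_\Pca b$ and $c \Ord_\Pca b$: here both $a$ and $c$ lie below $b$, so by the forest hypothesis (applied to $x := b$, $y_1 := a$, $y_2 := c$, after noting that the degenerate subcases where $a = b$ or $c = b$ are immediate), $a$ and $c$ are comparable, and the smaller of the two is the minimum of $\{a,b,c\}$ in $\Pca$.

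Finally, once a minimum $m \in \{a,b,c\}$ of the three elements is identified, $m$ is comparable to each of $a$, $b$, $c$, so both $(a \Min_\Pca b) \Min_\Pca c$ and $a \Min_\Pca (b \Min_\Pca c)$ are defined and equal to $m$; this is precisely the well-definedness claim. The only real obstacle is ensuring that the pattern-avoidance argument is correctly applied in the case $a, c \Ord_\Pca b$, in particular handling the equality subcases so that the three elements $\{a, c, b\}$ really form a subposet isomorphic to $\AntiForestPattern$ when $a$ and $c$ are assumed incomparable.
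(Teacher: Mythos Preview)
Your proof is correct. The paper itself omits the proof of this lemma, treating it as an easy observation, so there is no alternative argument to compare against; your case analysis and use of the forbidden pattern in the case $a \Ord_\Pca b$, $c \Ord_\Pca b$ are exactly the natural way to establish the claim.
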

\medskip

This easy property pointed out by Lemma~\ref{lem:forest_poset_max_three_elements}
is crucial to establish the Koszulity of some operads constructed further.
\medskip

\subsection{Trees and rewrite rules}
Unless otherwise specified, we use in the sequel the standard
terminology ({\em i.e.}, {\em node}, {\em edge}, {\em root}, {\em parent},
{\em child}, {\em ancestor}, {\em etc.}) about planar rooted
trees~\cite{Knu97}. For the sake of completeness, let us recall the
most important definitions and set our notations.
\medskip

\subsubsection{Trees}
Let $\Tfr$ be a planar rooted tree. The {\em arity} of a node of $\Tfr$
is its number of children. An {\em internal node} (resp. a {\em leaf})
of $\Tfr$ is a node with a nonzero (resp. null) arity. Internal nodes
can be {\em labeled}, that is, each internal node of a tree is associated
with an element of a certain set. Given an internal node $x$ of $\Tfr$,
due to the planarity of $\Tfr$, the children of $x$ are totally ordered
from left to right and are thus indexed from $1$ to the arity of $x$.
Similarly, the leaves of $\Tfr$ are totally ordered from left to right
and thus are indexed from $1$ to the number of its leaves. In our
graphical representations, each planar rooted tree is depicted so that
its root is the uppermost node. Since we consider in the sequel only
planar rooted trees, we shall call these simply {\em trees}.
\medskip

Let $\Tfr$ be a tree. A tree $\Sfr$ is a {\em bottom subtree} of $\Tfr$
if there exists a node $x$ of $\Tfr$ such that the tree consisting in
the nodes of $\Tfr$ having $x$ as ancestor is $\Sfr$. A tree $\Sfr$ is
a {\em top subtree} of~$\Tfr$ if $\Sfr$ can be obtained from $\Tfr$ by
replacing certain of nodes of $\Tfr$ by leaves and by forgetting their
descendants. A tree $\Sfr$ is a {\em middle subtree} of $\Tfr$ if $\Sfr$
is a top subtree of a bottom subtree of~$\Tfr$. The {\em position}
$\Pos_\Tfr(x)$ of a node $x$ of $\Tfr$ is the word of positive integers
recursively defined by $\Pos_\Tfr(x) := \epsilon$ if $x$ is the root of
$\Tfr$ and otherwise
\begin{math}
    \Pos_\Tfr(x) := i\, \Pos_\Sfr(x),
\end{math}
where $x$ is in the bottom subtree $\Sfr$ rooted at the $i$th child of
the root of $\Tfr$.
\medskip

If $\Tfr$ is a labeled tree wherein all internal nodes have exactly two
children, the {\em infix reading word} of $\Tfr$ is the word obtained
recursively by concatenating the infix reading word of the bottom
subtree of $\Tfr$ rooted at the first child of its root, the label
of the root of $\Tfr$, and the infix reading word of the bottom
subtree of $\Tfr$ rooted at the second child of its root.
\medskip

Let $\Tfr$ and $\Sfr$ be two trees. An {\em occurrence} of (the
{\em pattern}) $\Sfr$ in $\Tfr$ is a position $u$ of a node $x$ of $\Tfr$
such that $\Sfr$ is a middle subtree of $\Tfr$ rooted at $x$.
Conversely, we say that $\Tfr$ {\em avoids} $\Sfr$ if there is no
occurrence of $\Sfr$ in $\Tfr$. Let $\Sfr'$ be a tree with the same
number of leaves as $\Sfr$ and $u$ be an occurrence of $\Sfr$ in $\Tfr$.
The {\em replacement} of the occurrence $u$ of $\Sfr$ by $\Sfr'$ in
$\Tfr$ is the tree obtained by replacing the middle subtree of $\Tfr$
rooted at a node of position $u$ equal to $\Sfr$ by $\Sfr'$.
\medskip

\subsubsection{Rewrite rules}
Let $S$ be a set of trees. A {\em rewrite rule} on $S$ is a binary
relation $\Rew'$ on $S$ whenever for all trees $\Sfr$ and $\Sfr'$ of $S$,
$\Sfr \Rew' \Sfr'$ only if $\Sfr$ and $\Sfr'$ have the same number of
leaves. We say that a tree $\Tfr$ is {\em rewritable in one step} by
$\Rew'$ into $\Tfr'$ if there exist two trees $\Sfr$ and $\Sfr'$
satisfying $\Sfr \Rew' \Sfr'$ and $\Tfr$ admits an occurrence $u$ of
$\Sfr$ such that the tree obtained by replacing the occurrence $u$ of
$\Sfr$ by $\Sfr'$ in $\Tfr$ is $\Tfr'$. We denote by $\Tfr \Rew \Tfr'$
this property, so that $\Rew$ is a binary relation on $S$. When
$\Tfr = \Tfr'$ or when there exists a sequence of trees
$(\Tfr_1, \dots, \Tfr_{k - 1})$ with $k \geq 1$ such that
\begin{math}
    \Tfr \Rew \Tfr_1 \Rew \cdots \Rew \Tfr_{k - 1} \Rew \Tfr',
\end{math}
we say that $\Tfr$ is {\em rewritable} by $\Rew$ into $\Tfr'$ and we
denote this property by $\Tfr \RewTrans \Tfr'$. In other words,
$\RewTrans$ is the reflexive and transitive closure of $\Rew$. We denote
by $\RewEquiv$ the reflexive, transitive, and symmetric closure of
$\Rew$. The {\em vector space induced} by $\Rew'$ is the subspace of the linear
span of all trees of $S$ generated by the family of all $\Tfr - \Tfr'$
such that~$\Tfr \RewEquiv \Tfr'$.
\medskip

For instance, let $S$ be the set of all trees where internal nodes are
labeled on $\{\La, \Lb, \Lc\}$ and consider the rewrite rule $\Rew'$ on
$S$ satisfying
\begin{equation}
    \begin{split}
    \begin{tikzpicture}[xscale=.4,yscale=.23]
        \node(0)at(0.00,-2.00){};
        \node(2)at(1.00,-2.00){};
        \node(3)at(2.00,-2.00){};
        \node(1)at(1.00,0.00){\begin{math}\Lb\end{math}};
        \draw(0)--(1);
        \draw(2)--(1);
        \draw(3)--(1);
        \node(r)at(1.00,1.75){};
        \draw(r)--(1);
    \end{tikzpicture}
    \end{split}
    \begin{split} \enspace \Rew' \enspace \end{split}
    \begin{split}
    \begin{tikzpicture}[xscale=.27,yscale=.25]
        \node(0)at(0.00,-3.33){};
        \node(2)at(2.00,-3.33){};
        \node(4)at(4.00,-1.67){};
        \node(1)at(1.00,-1.67){\begin{math}\La\end{math}};
        \node(3)at(3.00,0.00){\begin{math}\La\end{math}};
        \draw(0)--(1);
        \draw(1)--(3);
        \draw(2)--(1);
        \draw(4)--(3);
        \node(r)at(3.00,1.5){};
        \draw(r)--(3);
    \end{tikzpicture}
    \end{split}
    \qquad \mbox{and} \qquad
    \begin{split}
    \begin{tikzpicture}[xscale=.27,yscale=.25]
        \node(0)at(0.00,-3.33){};
        \node(2)at(2.00,-3.33){};
        \node(4)at(4.00,-1.67){};
        \node(1)at(1.00,-1.67){\begin{math}\La\end{math}};
        \node(3)at(3.00,0.00){\begin{math}\Lc\end{math}};
        \draw(0)--(1);
        \draw(1)--(3);
        \draw(2)--(1);
        \draw(4)--(3);
        \node(r)at(3.00,1.5){};
        \draw(r)--(3);
    \end{tikzpicture}
    \end{split}
    \begin{split} \enspace \Rew' \enspace \end{split}
    \begin{split}
    \begin{tikzpicture}[xscale=.27,yscale=.25]
        \node(0)at(0.00,-1.67){};
        \node(2)at(2.00,-3.33){};
        \node(4)at(4.00,-3.33){};
        \node(1)at(1.00,0.00){\begin{math}\La\end{math}};
        \node(3)at(3.00,-1.67){\begin{math}\Lc\end{math}};
        \draw(0)--(1);
        \draw(2)--(3);
        \draw(3)--(1);
        \draw(4)--(3);
        \node(r)at(1.00,1.5){};
        \draw(r)--(1);
    \end{tikzpicture}
    \end{split}\,.
\end{equation}
We then have the following steps of rewritings by $\Rew'$:
\begin{equation}
    \begin{split}
    \begin{tikzpicture}[xscale=.2,yscale=.16]
        \node(0)at(0.00,-6.50){};
        \node(10)at(8.00,-6.50){};
        \node(12)at(10.00,-6.50){};
        \node(2)at(1.00,-9.75){};
        \node(4)at(3.00,-9.75){};
        \node(5)at(4.00,-9.75){};
        \node(7)at(5.00,-9.75){};
        \node(8)at(6.00,-9.75){};
        \node[text=BrickRed](1)at(2.00,-3.25){\begin{math}\Lb\end{math}};
        \node(11)at(9.00,-3.25){\begin{math}\La\end{math}};
        \node(3)at(2.00,-6.50){\begin{math}\Lc\end{math}};
        \node(6)at(5.00,-6.50){\begin{math}\Lb\end{math}};
        \node(9)at(7.00,0.00){\begin{math}\Lc\end{math}};
        \draw[draw=BrickRed](0)--(1);
        \draw[draw=BrickRed](1)--(9);
        \draw(10)--(11);
        \draw(11)--(9);
        \draw(12)--(11);
        \draw(2)--(3);
        \draw[draw=BrickRed](3)--(1);
        \draw(4)--(3);
        \draw(5)--(6);
        \draw[draw=BrickRed](6)--(1);
        \draw(7)--(6);
        \draw(8)--(6);
        \node(r)at(7.00,2.5){};
        \draw(r)--(9);
    \end{tikzpicture}
    \end{split}
    \begin{split} \enspace \Rew \enspace \end{split}
    \begin{split}
    \begin{tikzpicture}[xscale=.18,yscale=.18]
        \node(0)at(0.00,-8.40){};
        \node(11)at(10.00,-5.60){};
        \node(13)at(12.00,-5.60){};
        \node(2)at(2.00,-11.20){};
        \node(4)at(4.00,-11.20){};
        \node(6)at(6.00,-8.40){};
        \node(8)at(7.00,-8.40){};
        \node(9)at(8.00,-8.40){};
        \node(1)at(1.00,-5.60){\begin{math}\La\end{math}};
        \node[text=BrickRed](10)at(9.00,0.00){\begin{math}\Lc\end{math}};
        \node(12)at(11.00,-2.80){\begin{math}\La\end{math}};
        \node(3)at(3.00,-8.40){\begin{math}\Lc\end{math}};
        \node[text=BrickRed](5)at(5.00,-2.80){\begin{math}\La\end{math}};
        \node(7)at(7.00,-5.60){\begin{math}\Lb\end{math}};
        \draw(0)--(1);
        \draw[draw=BrickRed](1)--(5);
        \draw(11)--(12);
        \draw[draw=BrickRed](12)--(10);
        \draw(13)--(12);
        \draw(2)--(3);
        \draw(3)--(1);
        \draw(4)--(3);
        \draw[draw=BrickRed](5)--(10);
        \draw(6)--(7);
        \draw[draw=BrickRed](7)--(5);
        \draw(8)--(7);
        \draw(9)--(7);
        \node(r)at(9.00,2.25){};
        \draw[draw=BrickRed](r)--(10);
    \end{tikzpicture}
    \end{split}
    \begin{split} \enspace \Rew \enspace \end{split}
    \begin{split}
    \begin{tikzpicture}[xscale=.18,yscale=.15]
        \node(0)at(0.00,-7.00){};
        \node(11)at(10.00,-10.50){};
        \node(13)at(12.00,-10.50){};
        \node(2)at(2.00,-10.50){};
        \node(4)at(4.00,-10.50){};
        \node(6)at(6.00,-10.50){};
        \node(8)at(7.00,-10.50){};
        \node(9)at(8.00,-10.50){};
        \node(1)at(1.00,-3.50){\begin{math}\La\end{math}};
        \node(10)at(9.00,-3.50){\begin{math}\Lc\end{math}};
        \node(12)at(11.00,-7.00){\begin{math}\La\end{math}};
        \node(3)at(3.00,-7.00){\begin{math}\Lc\end{math}};
        \node(5)at(5.00,0.00){\begin{math}\La\end{math}};
        \node[text=BrickRed](7)at(7.00,-7.00){\begin{math}\Lb\end{math}};
        \draw(0)--(1);
        \draw(1)--(5);
        \draw(10)--(5);
        \draw(11)--(12);
        \draw(12)--(10);
        \draw(13)--(12);
        \draw(2)--(3);
        \draw(3)--(1);
        \draw(4)--(3);
        \draw[draw=BrickRed](6)--(7);
        \draw[draw=BrickRed](7)--(10);
        \draw[draw=BrickRed](8)--(7);
        \draw[draw=BrickRed](9)--(7);
        \node(r)at(5.00,2.75){};
        \draw(r)--(5);
    \end{tikzpicture}
    \end{split}
    \begin{split} \enspace \Rew \enspace \end{split}
    \begin{split}
    \begin{tikzpicture}[xscale=.2,yscale=.16]
        \node(0)at(0.00,-6.00){};
        \node(10)at(10.00,-9.00){};
        \node(12)at(12.00,-9.00){};
        \node(14)at(14.00,-9.00){};
        \node(2)at(2.00,-9.00){};
        \node(4)at(4.00,-9.00){};
        \node(6)at(6.00,-12.00){};
        \node(8)at(8.00,-12.00){};
        \node(1)at(1.00,-3.00){\begin{math}\La\end{math}};
        \node(11)at(11.00,-3.00){\begin{math}\Lc\end{math}};
        \node(13)at(13.00,-6.00){\begin{math}\La\end{math}};
        \node(3)at(3.00,-6.00){\begin{math}\Lc\end{math}};
        \node(5)at(5.00,0.00){\begin{math}\La\end{math}};
        \node(7)at(7.00,-9.00){\begin{math}\La\end{math}};
        \node(9)at(9.00,-6.00){\begin{math}\La\end{math}};
        \draw(0)--(1);
        \draw(1)--(5);
        \draw(10)--(9);
        \draw(11)--(5);
        \draw(12)--(13);
        \draw(13)--(11);
        \draw(14)--(13);
        \draw(2)--(3);
        \draw(3)--(1);
        \draw(4)--(3);
        \draw(6)--(7);
        \draw(7)--(9);
        \draw(8)--(7);
        \draw(9)--(11);
        \node(r)at(5.00,2.25){};
        \draw(r)--(5);
    \end{tikzpicture}
    \end{split}\,.
\end{equation}
\medskip

We shall use the standard terminology ({\em terminating},
{\em normal form}, {\em confluent}, {\em convergent},
{\em critical pair}, {\em etc.}) about rewrite rules~\cite{BN98}. Let us
recall the most important definitions. Let $\Rew'$ be a rewrite rule.
The {\em degree} of $\Rew'$ is the maximal number of internal nodes of
the trees appearing as left members of $\Rew'$. Besides, $\Rew'$ is
{\em terminating} if there is no infinite chain
$\Tfr \Rew \Tfr_1 \Rew \Tfr_2 \Rew \cdots$. In this case, any tree $\Tfr$
of $S$ that that cannot be rewritten by $\Rew'$ is a {\em normal form}
for $\Rew'$. We say that $\Rew'$ is {\em confluent} if for any trees
$\Tfr$, $\Rfr_1$, and $\Rfr_2$ such that $\Tfr \RewTrans \Rfr_1$ and
$\Tfr \RewTrans \Rfr_2$, there exists a tree $\Tfr'$ such that
$\Rfr_1 \RewTrans \Tfr'$ and $\Rfr_2 \RewTrans \Tfr'$.
We call {\em critical tree} of $\Rew'$ any tree $\Tfr$ such
that there exist two different trees $\Rfr_1$ and $\Rfr_2$ satisfying
$\Tfr \Rew \Rfr_1$ and $\Tfr \Rew \Rfr_2$. In this case, the pair
$\{\Rfr_1, \Rfr_2\}$ is a {\em critical pair} of $\Rew'$ associated with
$\Tfr$. Moreover, the critical pair $\{\Rfr_1, \Rfr_2\}$ is
{\em joinable} if there exists a tree $\Tfr'$ such that
$\Rfr_1 \RewTrans \Tfr'$ and $\Rfr_2 \RewTrans \Tfr'$.
By the diamond lemma~\cite{New42}, when $\Rew'$ is terminating and when
for any tree $\Tfr$, all critical pairs associated with $\Tfr$ are
joinable, $\Rew'$ is confluent. When $\Rew'$ is both terminating and
confluent, we say that $\Rew'$ is {\em convergent}.
\medskip

\begin{Lemma} \label{lem:confluent_few_nodes}
    Let $\Rew'$ be a terminating rewrite rule of degree $\ell$. Then,
    $\Rew'$ is confluent if and only if all critical pairs of $\Rew'$
    consisting in trees with $2 \ell - 1$ internal nodes or less are
    joinable.
\end{Lemma}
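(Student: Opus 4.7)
Plan: The $(\Rightarrow)$ direction is immediate from the definition of confluence. For $(\Leftarrow)$, since $\Rew'$ is terminating, Newman's diamond lemma (invoked just above this statement) reduces the problem to showing that \emph{every} critical pair of $\Rew'$ is joinable; the task is therefore to upgrade the hypothesis, which concerns only critical pairs arising from critical trees having at most $2\ell - 1$ internal nodes, to all critical pairs.

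Let $\Tfr$ be any critical tree, let $\{\Rfr_1, \Rfr_2\}$ be the associated critical pair, and let the two witnessing rewrites be applications at positions $u_1, u_2$ of $\Tfr$ of rules $\Sfr_1 \Rew' \Sfr'_1$ and $\Sfr_2 \Rew' \Sfr'_2$. Let $I_1$ and $I_2$ denote the sets of internal nodes of $\Tfr$ that correspond to internal nodes of the two pattern occurrences. The first case is $I_1 \cap I_2 = \emptyset$: the two rewrites then act on disjoint pieces of $\Tfr$, so each occurrence survives the other's rewriting, and both $\Rfr_1$ and $\Rfr_2$ rewrite in one step to the common tree obtained by performing both replacements. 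Hence the critical pair is joinable.

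The second case is $I_1 \cap I_2 \neq \emptyset$. Since $|I_1|, |I_2| \leq \ell$, one has $|I_1 \cup I_2| \leq 2\ell - 1$. The union $I_1 \cup I_2$ is connected and hangs from the meet $u_1 \wedge u_2$ of $u_1$ and $u_2$, so there is a well-defined reduced tree $\Tfr_0$ whose internal nodes are exactly those of $I_1 \cup I_2$ and whose leaves replace every bottom subtree of $\Tfr$ dangling from the boundary of this core. In $\Tfr_0$, the two rule applications still make sense at the corresponding positions and define a critical pair $\{\Rfr_1^0, \Rfr_2^0\}$, with $\Tfr_0$ having at most $2\ell - 1$ internal nodes. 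By hypothesis this pair is joinable: $\Rfr_1^0 \RewTrans \Tfr'_0$ and $\Rfr_2^0 \RewTrans \Tfr'_0$ for some common $\Tfr'_0$. Grafting back the discarded bottom subtrees onto the matching leaves at every step of these two rewrite sequences — which is legitimate because a rewrite step depends only on the middle subtree it replaces and not on its ambient context — produces rewrite sequences $\Rfr_1 \RewTrans \Tfr'$ and $\Rfr_2 \RewTrans \Tfr'$, witnessing the joinability of the original pair.

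The main obstacle is precisely the second case: one must justify that $I_1 \cup I_2$ determines a well-formed middle subtree of $\Tfr$ so that $\Tfr_0$ is well-defined and the two pattern occurrences persist in it, and then argue that re-grafting commutes with rewriting step by step. Both points rest on the locality of rewriting — a rewrite is specified by a pattern occurrence and is indifferent to what hangs below the leaves of that pattern — but this locality must be articulated precisely enough to transport rewrite sequences between $\Tfr_0$ and $\Tfr$ without losing validity at any intermediate step.
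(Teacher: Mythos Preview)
Your proposal is correct and follows essentially the same approach as the paper's proof: both reduce to Newman's diamond lemma, split on whether the two pattern occurrences share an internal node, handle the disjoint case by independent rewriting, and in the overlapping case extract the minimal middle subtree $\Tfr_0$ (the paper calls it~$\Sfr$) containing $I_1\cup I_2$, apply the hypothesis there, and transport the joining rewrites back to~$\Tfr$. Your observation that $|I_1\cup I_2|\leq 2\ell-1$ and that $I_1\cup I_2$ is rooted at $u_1\wedge u_2$ (which, since a shared node forces $u_1$ and $u_2$ to be comparable, is simply the higher of the two) is exactly the key step the paper uses.
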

\begin{proof}
    If $\Rew'$ is confluent, by definition of the confluence, all
    critical pairs of $\Rew'$ are joinable. In particular, critical
    pairs of trees with $2 \ell - 1$ internal nodes of less are joinable.
    \smallskip

    Conversely, assume that all critical pairs of trees with $2 \ell - 1$
    internal nodes of less are joinable. Let $\Tfr$ be a critical tree
    of $\Rew'$ and $\{\Rfr_1, \Rfr_2\}$ be a critical pair associated
    with $\Tfr$. We have thus $\Tfr \Rew \Rfr_1$ and $\Tfr \Rew \Rfr_2$.
    By definition of $\Rew'$, there are four trees $\Tfr'_1$, $\Rfr'_1$,
    $\Tfr'_2$, and $\Rfr'_2$ such that $\Tfr'_1 \Rew' \Rfr'_1$,
    $\Tfr'_2 \Rew' \Rfr'_2$, and $\Rfr_1$ (resp. $\Rfr_2$) is obtained
    by replacing an occurrence $u_1$ (resp. $u_2$) of $\Tfr'_1$
    (resp. $\Tfr'_2$) by $\Rfr'_1$ (resp. $\Rfr'_2$) in $\Tfr$. We have
    now two cases to consider, depending on the positions $u_1$ and $u_2$
    of $\Tfr'_1$ and $\Tfr'_2$ in $\Tfr$.
    \begin{enumerate}[fullwidth,label={\it Case \arabic*.}]
        \item If the occurrences of $\Tfr'_1$ and $\Tfr'_2$ at positions
        $u_1$ and $u_2$ in $\Tfr$ do not share any internal node of
        $\Tfr$, $\Rfr_1$ (resp. $\Rfr_2$) admits the considered
        occurrence of $\Tfr'_2$ (resp. $\Tfr'_1$). For these reasons, let
        $\Tfr'$ be the tree obtained by replacing the considered
        occurrence of $\Tfr'_2$ by $\Rfr'_2$ in $\Rfr_1$. Equivalently,
        since the considered occurrences of $\Tfr'_1$ and $\Tfr'_2$
        do not overlap in $\Tfr$, $\Tfr'$ is the tree obtained by
        replacing the considered occurrence of $\Tfr'_1$ by $\Rfr'_1$ in
        $\Rfr_2$. Thereby, we have $\Rfr_1 \Rew \Tfr'$ and
        $\Rfr_2 \Rew \Tfr'$, showing that the critical pair
        $\{\Rfr_1, \Rfr_2\}$ is joinable.
        \smallskip

        \item Otherwise, the occurrences of $\Tfr'_1$ and $\Tfr'_2$ at
        positions $u_1$ and $u_2$ in $\Tfr$ share at least one internal
        node of $\Tfr$. Denote by $\Sfr$ the middle subtree of $\Tfr$
        consisting in the smallest number of internal nodes such that
        all the internal nodes of the considered occurrences of $\Tfr'_1$
        and $\Tfr'_2$ in $\Tfr$ are in $\Sfr$. Let $\Sfr_1$ be the tree
        obtained by replacing the considered occurrence of $\Tfr'_1$ by
        $\Rfr'_1$ in $\Sfr$  and let $\Sfr_2$ be the tree obtained by
        replacing the considered  occurrence of $\Tfr'_2$ by $\Rfr'_2$
        in $\Sfr$. Since the occurrences of $\Tfr'_1$ and $\Tfr'_2$
        overlap in $\Tfr$, by the minimality of the number of nodes of
        $\Sfr$, $\Sfr$ has at most $2 \ell - 1$ internal nodes. Now,
        since we have $\Sfr \Rew \Sfr_1$ and $\Sfr \Rew \Sfr_2$, $\Sfr$
        is a critical tree of $\Rew'$ and $\{\Sfr_1, \Sfr_2\}$ is a
        critical pair associated with $\Sfr$. By hypothesis,
        $\{\Sfr_1, \Sfr_2\}$ is joinable, so that there is a tree $\Sfr'$
        such that $\Sfr_1 \RewTrans \Sfr'$ and $\Sfr_2 \RewTrans \Sfr'$.
        By setting $\Tfr'$ as the tree obtained by replacing the
        considered occurrence of $\Sfr$ by $\Sfr'$ in $\Tfr$, we finally
        have $\Rfr_1 \RewTrans \Tfr'$ and $\Rfr_2 \RewTrans \Tfr'$,
        showing that $\{\Rfr_1, \Rfr_2\}$ is joinable.
    \end{enumerate}
    We have shown that all critical pairs of $\Rew'$ are joinable.
    Since $\Rew'$ is terminating, this implies by the diamond
    lemma~\cite{New42} that $\Rew'$ is a confluent rewrite rule.
\end{proof}
\medskip

Lemma~\ref{lem:confluent_few_nodes} provides a criterion to prove that
a terminating rewrite rule $\Rew'$ is confluent by considering only
critical trees and associated critical pairs with a bounded number of
internal nodes. We shall work with rewrite rules arising in the context
of quadratic operads, and thus with degree~$2$. In this case, we will
consider only critical trees with at most three internal nodes.
\medskip

\subsection{Operads and Koszulity}
We adopt most of notations and conventions of~\cite{LV12} about operads.
For the sake of completeness, we recall here the elementary notions
about operads employed thereafter.
\medskip

\subsubsection{Nonsymmetric operads}
A {\em nonsymmetric operad in the category of vector spaces}, or a
{\em nonsymmetric operad} for short, is a graded vector space
$\Oca := \bigoplus_{n \geq 1} \Oca(n)$ together with linear maps
\begin{equation}
    \circ_i : \Oca(n) \otimes \Oca(m) \to \Oca(n + m - 1),
    \qquad n, m \geq 1, i \in [n],
\end{equation}
called {\em partial compositions}, and a distinguished element
$\Unit \in \Oca(1)$, the {\em unit} of $\Oca$. This data has to satisfy
the three relations
\begin{subequations}
\begin{equation} \label{equ:operad_axiom_1}
    (x \circ_i y) \circ_{i + j - 1} z = x \circ_i (y \circ_j z),
    \qquad x \in \Oca(n), y \in \Oca(m),
    z \in \Oca(k), i \in [n], j \in [m],
\end{equation}
\begin{equation} \label{equ:operad_axiom_2}
    (x \circ_i y) \circ_{j + m - 1} z = (x \circ_j z) \circ_i y,
    \qquad x \in \Oca(n), y \in \Oca(m),
    z \in \Oca(k), i < j \in [n],
\end{equation}
\begin{equation} \label{equ:operad_axiom_3}
    \Unit \circ_1 x = x = x \circ_i \Unit,
    \qquad x \in \Oca(n), i \in [n].
\end{equation}
\end{subequations}
Since we consider in this paper only nonsymmetric operads, we shall
call these simply {\em operads}. Moreover, in this work, we shall only
consider operads $\Oca$ where $\Oca(1)$ has dimension $1$.
\medskip

If $x$ is an element of $\Oca$ such that $x \in \Oca(n)$ for a $n \geq 1$,
we say that $n$ is the {\em arity} of $x$ and we denote it by $|x|$. An
element $x$ of $\Oca$ of arity $2$ is {\em associative} if
$x \circ_1 x = x \circ_2 x$. If $\Oca_1$ and $\Oca_2$ are two operads, a
linear map $\phi : \Oca_1 \to \Oca_2$ is an {\em operad morphism} if it
respects arities, sends the unit of $\Oca_1$ to the unit of $\Oca_2$,
and commutes with partial composition maps. We say that $\Oca_2$ is a
{\em suboperad} of $\Oca_1$ if $\Oca_2$ is a graded subspace of $\Oca_1$,
and $\Oca_1$ and $\Oca_2$ have the same unit and the same partial
compositions. For any set $G \subseteq \Oca$, the {\em operad generated
by} $G$ is the smallest suboperad of $\Oca$ containing $G$. When the
operad generated by $G$ is $\Oca$ itself and $G$ is minimal with respect
to inclusion among the subsets of $\Oca$ satisfying this property, $G$
is a {\em generating set} of $\Oca$ and its elements are {\em generators}
of $\Oca$. An {\em operad ideal} of $\Oca$ is a graded subspace $I$ of
$\Oca$ such that, for any $x \in \Oca$ and $y \in I$, $x \circ_i y$ and
$y \circ_j x$ are in $I$ for all valid integers $i$ and $j$. Given an
operad ideal $I$ of $\Oca$, one can define the {\em quotient operad}
$\Oca/_I$ of $\Oca$ by $I$ in the usual way. When $\Oca$ is such that
all $\Oca(n)$ are finite for all $n \geq 1$, the {\em Hilbert series} of
$\Oca$ is the series $\Hca_\Oca(t)$ defined by
\begin{equation}
    \Hca_\Oca(t) := \sum_{n \geq 1} \dim \Oca(n)\, t^n.
\end{equation}
\medskip

\subsubsection{Syntax trees and free operads}
Let $\FreeGen := \sqcup_{n \geq 1} \FreeGen(n)$ be a graded set. We say
that the {\em arity} of an element $x$ of $\FreeGen$ is $n$ provided
that $x \in \FreeGen(n)$. A {\em syntax tree on $\FreeGen$} is a planar
rooted tree such that its internal nodes of arity $n$ are labeled on
elements of arity $n$ of~$\FreeGen$. The {\em degree} (resp. {\em arity})
of a syntax tree is its number of internal nodes (resp. leaves). For
instance, if $\FreeGen := \FreeGen(2) \sqcup \FreeGen(3)$ with
$\FreeGen(2) := \{\La, \Lc\}$ and $\FreeGen(3) := \{\Lb\}$,
\begin{equation}
    \begin{split}
    \begin{tikzpicture}[xscale=.26,yscale=.15]
        \node(0)at(0.00,-6.50){};
        \node(10)at(8.00,-9.75){};
        \node(12)at(10.00,-9.75){};
        \node(2)at(2.00,-6.50){};
        \node(4)at(3.00,-3.25){};
        \node(5)at(4.00,-9.75){};
        \node(7)at(5.00,-9.75){};
        \node(8)at(6.00,-9.75){};
        \node(1)at(1.00,-3.25){\begin{math}\Lc\end{math}};
        \node(11)at(9.00,-6.50){\begin{math}\La\end{math}};
        \node(3)at(3.00,0.00){\begin{math}\Lb\end{math}};
        \node(6)at(5.00,-6.50){\begin{math}\Lb\end{math}};
        \node(9)at(7.00,-3.25){\begin{math}\La\end{math}};
        \node(r)at(3.00,2.75){};
        \draw(0)--(1); \draw(1)--(3); \draw(10)--(11); \draw(11)--(9);
        \draw(12)--(11); \draw(2)--(1); \draw(4)--(3); \draw(5)--(6);
        \draw(6)--(9); \draw(7)--(6); \draw(8)--(6); \draw(9)--(3);
        \draw(r)--(3);
    \end{tikzpicture}
    \end{split}
\end{equation}
is a syntax tree on $\FreeGen$ of degree $5$ and arity $8$. Its root is
labeled by $\Lb$ and has arity $3$.
\medskip

The {\em free operad over $\FreeGen$} is the operad  $\Free(\FreeGen)$
wherein for any $n \geq 1$, $\Free(\FreeGen)(n)$ is the linear span of
the syntax trees on $\FreeGen$ of arity $n$, the partial composition
$\Sfr \circ_i \Tfr$ of two syntax trees $\Sfr$ and $\Tfr$ on $\FreeGen$
consists in grafting the root of $\Tfr$ on the $i$th leaf of $\Sfr$, and
its unit is the tree consisting in one leaf. For instance, if
$\FreeGen := \FreeGen(2) \sqcup \FreeGen(3)$ with
$\FreeGen(2) := \{\La, \Lc\}$ and $\FreeGen(3) := \{\Lb\}$, one has in
$\Free(\FreeGen)$,
\begin{equation}\begin{split}\end{split}
    \begin{split}
    \begin{tikzpicture}[xscale=.28,yscale=.17]
        \node(0)at(0.00,-5.33){};
        \node(2)at(2.00,-5.33){};
        \node(4)at(4.00,-5.33){};
        \node(6)at(5.00,-5.33){};
        \node(7)at(6.00,-5.33){};
        \node[text=RoyalBlue](1)at(1.00,-2.67){\begin{math}\La\end{math}};
        \node[text=RoyalBlue](3)at(3.00,0.00){\begin{math}\La\end{math}};
        \node[text=RoyalBlue](5)at(5.00,-2.67){\begin{math}\Lb\end{math}};
        \node(r)at(3.00,2.25){};
        \draw[draw=RoyalBlue](0)--(1); \draw[draw=RoyalBlue](1)--(3);
        \draw[draw=RoyalBlue](2)--(1); \draw[draw=RoyalBlue](4)--(5);
        \draw[draw=RoyalBlue](5)--(3); \draw[draw=RoyalBlue](6)--(5);
        \draw[draw=RoyalBlue](7)--(5); \draw[draw=RoyalBlue](r)--(3);
    \end{tikzpicture}
    \end{split}
    \enspace \circ_3 \enspace
    \begin{split}
    \begin{tikzpicture}[xscale=.26,yscale=.23]
        \node(0)at(0.00,-1.67){};
        \node(2)at(2.00,-3.33){};
        \node(4)at(4.00,-3.33){};
        \node[text=BrickRed](1)at(1.00,0.00){\begin{math}\Lc\end{math}};
        \node[text=BrickRed](3)at(3.00,-1.67){\begin{math}\La\end{math}};
        \node(r)at(1.00,1.75){};
        \draw[draw=BrickRed](0)--(1); \draw[draw=BrickRed](2)--(3);
        \draw[draw=BrickRed](3)--(1); \draw[draw=BrickRed](4)--(3);
        \draw[draw=BrickRed](r)--(1);
    \end{tikzpicture}
    \end{split}
    \enspace = \enspace
    \begin{split}
    \begin{tikzpicture}[xscale=.25,yscale=.18]
        \node(0)at(0.00,-4.80){};
        \node(10)at(9.00,-4.80){};
        \node(11)at(10.00,-4.80){};
        \node(2)at(2.00,-4.80){};
        \node(4)at(4.00,-7.20){};
        \node(6)at(6.00,-9.60){};
        \node(8)at(8.00,-9.60){};
        \node[text=RoyalBlue](1)at(1.00,-2.40){\begin{math}\La\end{math}};
        \node[text=RoyalBlue](3)at(3.00,0.00){\begin{math}\La\end{math}};
        \node[text=BrickRed](5)at(5.00,-4.80){\begin{math}\Lc\end{math}};
        \node[text=BrickRed](7)at(7.00,-7.20){\begin{math}\La\end{math}};
        \node[text=RoyalBlue](9)at(9.00,-2.40){\begin{math}\Lb\end{math}};
        \node(r)at(3.00,2.40){};
        \draw[draw=RoyalBlue](0)--(1); \draw[draw=RoyalBlue](1)--(3);
        \draw[draw=RoyalBlue](10)--(9); \draw[draw=RoyalBlue](11)--(9);
        \draw[draw=RoyalBlue](2)--(1); \draw[draw=BrickRed](4)--(5);
        \draw[draw=RoyalBlue!50!BrickRed!50](5)--(9); \draw[draw=BrickRed](6)--(7);
        \draw[draw=BrickRed](7)--(5); \draw[draw=BrickRed](8)--(7);
        \draw[draw=RoyalBlue](9)--(3); \draw[draw=RoyalBlue](r)--(3);
    \end{tikzpicture}
    \end{split}\,.
\end{equation}
\medskip

We denote by $\Corolla : \FreeGen \to \Free(\FreeGen)$ the inclusion map,
sending any $x$ of $\FreeGen$ to the {\em corolla} labeled by $x$, that
is the syntax tree consisting in one internal node labeled by $x$
attached to a required number of leaves. In the sequel, if required by
the context, we shall implicitly see any element $x$ of $\FreeGen$ as
the corolla $\Corolla(x)$ of $\Free(\FreeGen)$. For instance, when $x$
and $y$ are two elements of $\FreeGen$, we shall simply denote by
$x \circ_i y$ the syntax tree $\Corolla(x) \circ_i \Corolla(y)$ for all
valid integers $i$.
\medskip

\subsubsection{Presentations by generators and relations}
A {\em presentation} of an operad $\Oca$ consists in a pair
$(\FreeGen, \FreeRel)$ such that
$\FreeGen := \sqcup_{n \geq 1} \FreeGen(n)$ is a graded set, $\FreeRel$
is a subspace of $\Free(\FreeGen)$, and  $\Oca$ is isomorphic to
$\Free(\FreeGen)/_{\langle \FreeRel \rangle}$,  where
$\langle \FreeRel \rangle$ is the operad ideal of $\Free(\FreeGen)$
generated by $\FreeRel$. We call $\FreeGen$ the {\em set of generators}
and $\FreeRel$ the {\em space of relations} of $\Oca$. We say that
$\Oca$ is {\em quadratic} if there is a presentation
$(\FreeGen, \FreeRel)$ of $\Oca$ such that $\FreeRel$ is a homogeneous
subspace of $\Free(\FreeGen)$ consisting in syntax trees of degree $2$.
Besides, we say that $\Oca$ is {\em binary} if there is a presentation
$(\FreeGen, \FreeRel)$ of $\Oca$ such that $\FreeGen$ is concentrated in
arity~$2$. Furthermore, if $\Oca$ admits a presentation
$(\FreeGen, \FreeRel)$ and $\Rew'$ is a rewrite rule on $\Free(\FreeGen)$
such that the space induced by $\Rew'$ is $\langle \FreeRel \rangle$, we
say that $\Rew'$ is an {\em orientation} of $\FreeRel$.
\medskip

\subsubsection{Koszul duality and Koszulity}%
\label{subsubsec:koszul_duality_koszulity_criterion}
In~\cite{GK94}, Ginzburg and Kapranov extended the notion of Koszul
duality of quadratic associative algebras to quadratic operads. Starting
with a binary and quadratic operad $\Oca$ admitting a presentation
$(\FreeGen, \FreeRel)$ where $\FreeGen$ is finite, the {\em Koszul dual}
of $\Oca$ is the operad $\Oca^!$, isomorphic to the operad admitting the
presentation $(\FreeGen, \FreeRel^\perp)$ where $\FreeRel^\perp$ is the
annihilator of $\FreeRel$ in $\Free(\FreeGen)$ with respect to the scalar
product
\begin{equation} \label{equ:scalar_product_koszul}
    \langle -, - \rangle :
    \Free(\FreeGen)(3) \otimes \Free(\FreeGen)(3) \to \K
\end{equation}
linearly defined, for all $x, x', y, y' \in \FreeGen(2)$, by
\begin{equation}
    \left\langle x \circ_i y, x' \circ_{i'} y' \right\rangle :=
    \begin{cases}
        1 & \mbox{if }
            x = x', y = y', \mbox{ and } i = i' = 1, \\
        -1 & \mbox{if }
            x = x', y = y', \mbox{ and } i = i' = 2, \\
        0 & \mbox{otherwise}.
    \end{cases}
\end{equation}
Then, with knowledge of a presentation of $\Oca$, one can compute a
presentation of $\Oca^!$.
\medskip

Besides, a quadratic operad $\Oca$ is {\em Koszul} if its Koszul complex
is acyclic~\cite{GK94,LV12}. Furthermore, when $\Oca$ is Koszul and
admits an Hilbert series, the Hilbert series of $\Oca$ and of its Koszul
dual $\Oca^!$ are related~\cite{GK94} by
\begin{equation}
    \Hca_\Oca\left(-\Hca_{\Oca^!}(-t)\right) = t.
\end{equation}
\medskip

In this work, to prove the Koszulity of an operad $\Oca$, we shall make
use of a tool introduced by Dotsenko and Khoroshkin~\cite{DK10} in
the context of Gröbner bases for operads, which reformulates in our
context, by using rewrite rules on syntax trees, in the following way.
\begin{Lemma} \label{lem:koszulity_criterion_pbw}
    Let $\Oca$ be an operad admitting a quadratic presentation
    $(\FreeGen, \FreeRel)$. If there exists an orientation $\Rew'$ of
    $\FreeRel$ such that $\Rew'$ is a convergent rewrite rule, then
    $\Oca$ is Koszul.
\end{Lemma}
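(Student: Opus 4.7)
The plan is to deduce this result by translating the convergent rewrite rule into a quadratic Gröbner basis (equivalently a PBW basis) for the operad, and then invoking the established Koszulity criterion of Dotsenko--Khoroshkin and Hoffbeck.

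First I would show that the set $\NormalF$ of normal forms of $\Rew'$ in $\Free(\FreeGen)$ descends to a linear basis of $\Oca = \Free(\FreeGen)/_{\langle \FreeRel \rangle}$. Since $\Rew'$ is terminating, every syntax tree $\Tfr \in \Free(\FreeGen)$ rewrites in finitely many steps to some normal form $\mathrm{nf}(\Tfr) \in \NormalF$, and since $\Rew'$ is confluent, $\mathrm{nf}(\Tfr)$ does not depend on the chosen sequence of rewritings. Because $\Rew'$ is an orientation of $\FreeRel$, the equivalence classes of $\Tfr$ and $\mathrm{nf}(\Tfr)$ modulo $\langle \FreeRel \rangle$ coincide, so the image of $\NormalF$ spans $\Oca$. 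Linear independence follows from the fact that any relation $\sum_i \lambda_i \Tfr_i \in \langle \FreeRel \rangle$ with $\Tfr_i \in \NormalF$ pairwise distinct can be rewritten, by confluence, to an identity on normal forms, forcing $\lambda_i = 0$.

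Next I would observe that the orientation $\Rew'$ can be interpreted as a quadratic Gröbner basis for $\langle \FreeRel \rangle$ with respect to an appropriate admissible order on tree monomials: each rule $\Tfr \Rew' \Sfr$ in $\FreeRel$ picks a leading term $\Tfr$ of degree $2$, and the convergence of $\Rew'$ translates into the joinability of all S-polynomial analogues, namely the critical pairs produced by overlaps of leading terms. By Lemma~\ref{lem:confluent_few_nodes}, confluence of $\Rew'$ is entirely controlled by critical trees of degree at most $3$, which is exactly the range tested by the operadic Buchberger-type criterion of Dotsenko--Khoroshkin. Hence $\NormalF$ is a quadratic PBW basis of $\Oca$ in the sense of Hoffbeck~\cite{Hof10}.

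Having established a quadratic PBW basis, the conclusion that $\Oca$ is Koszul is exactly the content of Hoffbeck's theorem (and equivalently of the Gröbner basis criterion of~\cite{DK10}). The main obstacle in a fully self-contained argument would be the compatibility check between the combinatorial data of $\Rew'$ (which is oriented by the chosen presentation of $\FreeRel$) and the requirement of an admissible monomial order on $\Free(\FreeGen)$; however, termination of $\Rew'$ is precisely what supplies such an order, so invoking the cited results suffices.
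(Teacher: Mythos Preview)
Your proposal is correct and matches the paper's own treatment: the paper does not give a self-contained proof of this lemma but explicitly presents it as a reformulation of the Gr\"obner basis criterion of Dotsenko--Khoroshkin~\cite{DK10} and the PBW criterion of Hoffbeck~\cite{Hof10}, which is precisely the route you take. Your additional remarks on why normal forms yield a basis and how convergence corresponds to the Buchberger-type confluence check are a faithful unpacking of that citation; the only point worth tightening is the last sentence, since termination of $\Rew'$ alone does not automatically produce an \emph{admissible} (multiplicatively compatible) monomial order on $\Free(\FreeGen)$---one must either exhibit such an order explicitly or appeal to the more general rewriting framework, but this is a standard technicality handled in the cited references.
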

\medskip

When $\Rew'$ satisfies the conditions contained in the statement of
Lemma~\ref{lem:koszulity_criterion_pbw}, the set of the normal forms
of $\Rew'$ forms a basis of $\Oca$, called
{\em Poincaré-Birkhoff-Witt basis} and arising in the work of
Hoffbeck~\cite{Hof10} (see also~\cite{LV12}).
\medskip

\subsubsection{Algebras over an operad}
Any operad $\Oca$ encodes a category of algebras whose objects are
called {\em $\Oca$-algebras}. An $\Oca$-algebra $\Alg_\Oca$ is a vector
space endowed with a left action
\begin{equation}
    \cdot : \Oca(n) \otimes \Alg_\Oca^{\otimes n} \to \Alg_\Oca,
    \qquad n \geq 1,
\end{equation}
satisfying the relations imposed by the structure of $\Oca$, that are
\begin{multline} \label{equ:algebra_over_operad}
    (x \circ_i y) \cdot (e_1 \otimes \dots \otimes e_{n + m - 1})
    = \\
    x \cdot \left(e_1 \otimes \dots
        \otimes e_{i - 1} \otimes
        y \cdot (e_i \otimes \dots \otimes e_{i + m - 1})
        \otimes e_{i + m} \otimes
        \dots \otimes e_{n + m - 1}\right),
\end{multline}
for all $x \in \Oca(n)$, $y \in \Oca(m)$, $i \in [n]$, and
\begin{math}
    e_1 \otimes \dots \otimes e_{n + m - 1}
    \in \Alg_\Oca^{\otimes {n + m - 1}}.
\end{math}
Notice that, by~\eqref{equ:algebra_over_operad}, if $G$ is a generating
set of $\Oca$, it is enough to define the action of each $x \in G$ on
$\Alg_\Oca^{\otimes |x|}$ to wholly define~$\cdot$.
\medskip

\section{From posets to operads} \label{sec:posets_to_operads}
This section is devoted to the introduction of our construction
producing an operad from a poset. We also establish here some of its
basic general properties. We end this section by presenting algebras
over our operads and some of their properties.
\medskip

\subsection{Construction}
Let us describe the construction $\As$, associating with any poset a
binary and quadratic operad presentation, and prove that it is functorial.
\medskip

\subsubsection{Operad presentations from posets}%
\label{subsubsec:definition_as}
For any poset $(\Pca, \Ord_\Pca)$, we define the
{\em $\Pca$-associative operad} $\As(\Pca)$ as the operad admitting the
presentation $(\FreeGen_\Pca^\Op, \FreeRel_\Pca^\Op)$ where
$\FreeGen_\Pca^\Op$ is the set of generators
\begin{equation}
    \FreeGen_\Pca^\Op := \FreeGen_\Pca^\Op(2) := \{\Op_a : a \in \Pca\},
\end{equation}
and $\FreeRel_\Pca^\Op$ is the space of relations generated by
\begin{subequations}
\begin{equation} \label{equ:relation_1}
    \Op_a \circ_1 \Op_b
    - \Op_{a \Min_\Pca b} \circ_2 \Op_{a \Min_\Pca b},
    \qquad
    a, b \in \Pca \mbox{ and }
        (a \Ord_\Pca b \mbox{ or } b \Ord_\Pca a),
\end{equation}
\begin{equation} \label{equ:relation_2}
    \Op_{a \Min_\Pca b} \circ_1 \Op_{a \Min_\Pca b}
    - \Op_a \circ_2 \Op_b,
    \qquad
    a, b \in \Pca \mbox{ and }
        (a \Ord_\Pca b \mbox{ or } b \Ord_\Pca a).
\end{equation}
\end{subequations}
By definition, $\As(\Pca)$ is a binary and quadratic operad.
\medskip

\begin{Lemma} \label{lem:relations}
    Let $\Pca$ be a poset. For any $a \in \Pca$, let $R_a$ be the set
    \begin{equation}
        R_a := \{\Op_a \circ_1 \Op_b, \; \Op_b \circ_1 \Op_a, \;
        \Op_b \circ_2 \Op_a, \; \Op_a \circ_2 \Op_b
        : b \in \Pca \mbox{ and } a \Ord_\Pca b\}.
    \end{equation}
    Then, for all $x, y \in R_a$, $x - y$ is an element of the space of
    relations $\FreeRel_\Pca^\Op$ of $\As(\Pca)$.
\end{Lemma}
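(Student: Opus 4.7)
The plan is to show that every element of $R_a$ is congruent to the single element $\Op_a \circ_1 \Op_a$ modulo $\FreeRel_\Pca^\Op$; the conclusion then follows immediately by taking differences.

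Fix an arbitrary $b \in \Pca$ with $a \Ord_\Pca b$. The key observation is that $a \Min_\Pca b = a = b \Min_\Pca a$, so all four instantiations of the defining relations~\eqref{equ:relation_1} and~\eqref{equ:relation_2} at the ordered pairs $(a, b)$ and $(b, a)$ take a particularly simple shape:
\begin{subequations}
\begin{equation}
    \Op_a \circ_1 \Op_b - \Op_a \circ_2 \Op_a \in \FreeRel_\Pca^\Op,
    \qquad
    \Op_b \circ_1 \Op_a - \Op_a \circ_2 \Op_a \in \FreeRel_\Pca^\Op,
\end{equation}
\begin{equation}
    \Op_a \circ_1 \Op_a - \Op_a \circ_2 \Op_b \in \FreeRel_\Pca^\Op,
    \qquad
    \Op_a \circ_1 \Op_a - \Op_b \circ_2 \Op_a \in \FreeRel_\Pca^\Op.
\end{equation}
\end{subequations}
Moreover, specializing~\eqref{equ:relation_1} at $b := a$ (which is legal since $a \Ord_\Pca a$) yields $\Op_a \circ_1 \Op_a - \Op_a \circ_2 \Op_a \in \FreeRel_\Pca^\Op$.

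Chaining these five congruences shows that each of the four elements $\Op_a \circ_1 \Op_b$, $\Op_b \circ_1 \Op_a$, $\Op_b \circ_2 \Op_a$, $\Op_a \circ_2 \Op_b$ is congruent to $\Op_a \circ_1 \Op_a$ modulo $\FreeRel_\Pca^\Op$. As this conclusion is uniform in the choice of $b \in \Pca$ with $a \Ord_\Pca b$, every generator of $R_a$ is equivalent to the common element $\Op_a \circ_1 \Op_a$, hence any two elements $x, y \in R_a$ satisfy $x - y \in \FreeRel_\Pca^\Op$.

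There is no real obstacle here: the argument is a direct bookkeeping of the relations, and the only subtlety is to remember that~\eqref{equ:relation_1} and~\eqref{equ:relation_2} are stated for unordered pairs (via the disjunction $a \Ord_\Pca b$ or $b \Ord_\Pca a$), so one is permitted to instantiate them at both $(a, b)$ and $(b, a)$, and also at the diagonal pair $(a, a)$.
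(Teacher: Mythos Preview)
Your proof is correct and follows essentially the same approach as the paper's: both arguments show that every element of $R_a$ is congruent, via the defining relations~\eqref{equ:relation_1} and~\eqref{equ:relation_2} specialized so that $a \Min_\Pca b = a$, to a common pivot element (you use $\Op_a \circ_1 \Op_a$, the paper effectively uses $\Op_a \circ_2 \Op_a$), and then conclude by transitivity. Your presentation is in fact slightly cleaner in that you write out all five needed instances explicitly rather than handling one case and invoking analogy.
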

\begin{proof}
    This statement is a consequence of the definition of the space
    $\FreeRel_\Pca^\Op$ through the elements~\eqref{equ:relation_1}
    and~\eqref{equ:relation_2}. For instance, for
    $x := \Op_a \circ_1 \Op_b$ and $y := \Op_{b'} \circ_1 \Op_a$ where
    $b$ and $b'$ are elements of $\Pca$ such that $a \Ord_\Pca b$ and
    $a \Ord_\Pca b'$, by~\eqref{equ:relation_1}, we have that
    $\Op_a \circ_1 \Op_b - \Op_a \circ_2 \Op_a$ and
    $\Op_{b'} \circ_1 \Op_a - \Op_a \circ_2 \Op_a$ are in
    $\FreeRel_\Pca^\Op$. Thus,
    \begin{math}
        (\Op_a \circ_1 \Op_b - \Op_a \circ_2 \Op_a)
        - (\Op_{b'} \circ_1 \Op_a - \Op_a \circ_2 \Op_a) =
        x - y
    \end{math}
    also is. The proof for the other possibilities for $x$ and $y$
    as elements of $R_a$ is analogous.
\end{proof}
\medskip

By Lemma~\ref{lem:relations}, we observe that all $\Op_a$, $a \in \Pca$,
are associative. For this reason, $\As(\Pca)$ is a generalization of the
associative operad on several generating operations. As we will see in
the sequel, this very simple way to produce operads has many
combinatorial and algebraic properties.
\medskip

\subsubsection{Functoriality}
For any morphism of posets $\phi : \Pca_1 \to \Pca_2$, we denote by
$\As(\phi)$ the map
\begin{equation}
    \As(\phi) : \As(\Pca_1)(2) \to \As(\Pca_2)(2)
\end{equation}
defined by
\begin{equation}
    \As(\phi)\left(\pi_1(\Op_x)\right) :=
    \pi_2\left(\Op_{\phi(x)}\right)
\end{equation}
for all $x \in \Pca_1$, where
$\pi_1 : \Free(\FreeGen_{\Pca_1}^\Op) \to \As(\Pca_1)$ and
$\pi_2 : \Free(\FreeGen_{\Pca_2}^\Op) \to \As(\Pca_2)$ are canonical
projections.
\medskip

\begin{Lemma} \label{lem:as_poset_map}
    Let $\Pca_1$ and $\Pca_2$ be two posets and
    $\phi : \Pca_1 \to \Pca_2$ be a morphism of posets. Then, the map
    $\As(\phi)$ uniquely extends into an operad morphism from
    $\As(\Pca_1)$ to $\As(\Pca_2)$.
\end{Lemma}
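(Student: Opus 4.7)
The plan is to use the universal property of the free operad $\Free(\FreeGen_{\Pca_1}^\Op)$ together with the presentation of $\As(\Pca_1)$ as a quotient of this free operad. Since $\As(\Pca_1)$ is generated by arity~$2$ elements and any operad morphism is determined by its values on a generating set, uniqueness is automatic: I only have to verify that the given assignment on generators extends to an operad morphism at all.

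First, I would lift the assignment to the free level by defining an operad morphism
\begin{equation*}
    \widetilde{\As(\phi)} : \Free(\FreeGen_{\Pca_1}^\Op) \to \As(\Pca_2)
\end{equation*}
determined on generators by $\Op_a \mapsto \pi_2(\Op_{\phi(a)})$ for all $a \in \Pca_1$. This exists and is unique by the universal property of $\Free(\FreeGen_{\Pca_1}^\Op)$. Next, to obtain the desired morphism $\As(\Pca_1) \to \As(\Pca_2)$, it suffices to show that $\widetilde{\As(\phi)}$ vanishes on every generator of the operad ideal $\langle \FreeRel_{\Pca_1}^\Op \rangle$, and hence factors through the quotient $\As(\Pca_1) = \Free(\FreeGen_{\Pca_1}^\Op)/\langle \FreeRel_{\Pca_1}^\Op \rangle$.

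The core of the argument is therefore to check that the generating relations~\eqref{equ:relation_1} and~\eqref{equ:relation_2} of $\FreeRel_{\Pca_1}^\Op$ are sent to zero. Pick $a, b \in \Pca_1$ with $a$ and $b$ comparable. Since $\phi$ is a poset morphism, $\phi(a)$ and $\phi(b)$ are comparable in $\Pca_2$, and by Lemma~\ref{lem:morphism_posets_min} we have $\phi(a \Min_{\Pca_1} b) = \phi(a) \Min_{\Pca_2} \phi(b)$. Applying $\widetilde{\As(\phi)}$ to the relation~\eqref{equ:relation_1} for $(a,b)$ therefore produces
\begin{equation*}
    \Op_{\phi(a)} \circ_1 \Op_{\phi(b)} - \Op_{\phi(a) \Min_{\Pca_2} \phi(b)} \circ_2 \Op_{\phi(a) \Min_{\Pca_2} \phi(b)},
\end{equation*}
which is a generating element of $\FreeRel_{\Pca_2}^\Op$ and so vanishes under $\pi_2$. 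The same check applies to~\eqref{equ:relation_2}.

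The only obstacle is really the point at which comparability and the $\Min$ operation interact with $\phi$; this is handled precisely by Lemma~\ref{lem:morphism_posets_min}, which is why that lemma was stated beforehand. Once this check is done, the induced map is an operad morphism because it comes from a quotient of an operad morphism, its restriction to arity $2$ agrees with the originally given $\As(\phi)$ by construction, and uniqueness follows because $\FreeGen_{\Pca_1}^\Op$ generates $\As(\Pca_1)$.
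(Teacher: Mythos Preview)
Your proof is correct and follows essentially the same approach as the paper: lift to the free operad, use Lemma~\ref{lem:morphism_posets_min} to check that each generating relation of $\FreeRel_{\Pca_1}^\Op$ is sent to a generating relation of $\FreeRel_{\Pca_2}^\Op$ (hence to zero under $\pi_2$), and conclude by the universal property of the quotient. Your framing via the universal property is slightly more explicit than the paper's direct computation, but the argument is the same.
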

\begin{proof}
    Let us denote by
    $\pi_1 : \Free(\FreeGen_{\Pca_1}^\Op) \to \As(\Pca_1)$ and by
    $\pi_2 : \Free(\FreeGen_{\Pca_2}^\Op) \to \As(\Pca_2)$ the canonical
    projections. Let $r$ be an element of $\FreeRel_{\Pca_1}^\Op$
    of the form~\eqref{equ:relation_1}. Then,
    \begin{math}
        r = \Op_a \circ_1 \Op_b -
        \Op_{a \Min_{\Pca_1} b} \circ_2 \Op_{a \Min_{\Pca_1} b}
    \end{math}
    where $a$ and $b$ are two comparable elements of $\Pca_1$. We have,
    by Lemma~\ref{lem:morphism_posets_min},
    \begin{equation}\begin{split}
        \As(\phi)(\pi_1(\Op_a)) \circ_1 & \As(\phi)(\pi_1(\Op_b))
        -  \As(\phi)(\pi_1(\Op_{a \Min_{\Pca_1} b})) \circ_2
            \As(\phi)(\pi_1(\Op_{a \Min_{\Pca_1} b}))
        \\ & =
        \pi_2\left(\Op_{\phi(a)}\right) \circ_1 \pi_2(\Op_{\phi(b)})
        -
        \pi_2\left(\Op_{\phi(a \Min_{\Pca_1} b)}\right) \circ_2
            \pi_2\left(\Op_{\phi(a \Min_{\Pca_1} b)}\right)
        \\ & =
        \pi_2\left(\Op_{\phi(a)}\right) \circ_1 \pi_2\left(\Op_{\phi(b)}\right)
        -
        \pi_2\left(\Op_{\phi(a) \Min_{\Pca_2} \phi(b)}\right) \circ_2
            \pi_2\left(\Op_{\phi(a) \Min_{\Pca_2} \phi(b)}\right)
        \\ & =
        \pi_2\left(\Op_{\phi(a)} \circ_1 \Op_{\phi(b)}
        -
        \Op_{\phi(a) \Min_{\Pca_2} \phi(b)} \circ_2
            \Op_{\phi(a) \Min_{\Pca_2} \phi(b)}\right)
        \\ & = 0,
    \end{split}\end{equation}
    showing that $\As(\phi)(\pi_1(r)) = 0$. An analogous computation shows
    that the same property holds for all elements $r$ of
    $\FreeRel_{\Pca_1}^\Op$ of the form~\eqref{equ:relation_2}. This
    shows that $\As(\phi)$ extends in a unique way into an operad morphism.
\end{proof}
\medskip

\begin{Theorem} \label{thm:as_poset_functor}
    The construction $\As$ is a functor from the category of posets to
    the category of binary and quadratic operads.
\end{Theorem}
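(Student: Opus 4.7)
The plan is to observe that almost all of the work is already done. By the definition in Section~\ref{subsubsec:definition_as}, every poset $\Pca$ is sent to a binary and quadratic operad $\As(\Pca)$, so $\As$ acts correctly on objects. For morphisms, Lemma~\ref{lem:as_poset_map} already provides, for each morphism of posets $\phi : \Pca_1 \to \Pca_2$, a unique operad morphism $\As(\phi) : \As(\Pca_1) \to \As(\Pca_2)$ extending the prescribed action on the generators $\Op_a$. It therefore remains only to verify the two functorial axioms: preservation of identities and compatibility with composition.

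I would first treat identities. Given a poset $\Pca$, the identity map $\mathrm{id}_\Pca : \Pca \to \Pca$ is a poset morphism, and on each generator one has $\As(\mathrm{id}_\Pca)(\pi(\Op_a)) = \pi(\Op_{\mathrm{id}_\Pca(a)}) = \pi(\Op_a)$, where $\pi : \Free(\FreeGen_\Pca^\Op) \to \As(\Pca)$ is the canonical projection. Since the identity of $\As(\Pca)$ is also an operad morphism extending this action on generators, the uniqueness part of Lemma~\ref{lem:as_poset_map} forces $\As(\mathrm{id}_\Pca) = \mathrm{id}_{\As(\Pca)}$.

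Next I would treat composition. Let $\phi : \Pca_1 \to \Pca_2$ and $\psi : \Pca_2 \to \Pca_3$ be morphisms of posets, so that $\psi \circ \phi : \Pca_1 \to \Pca_3$ is again a morphism of posets. Both $\As(\psi \circ \phi)$ and $\As(\psi) \circ \As(\phi)$ are operad morphisms from $\As(\Pca_1)$ to $\As(\Pca_3)$, hence by the uniqueness in Lemma~\ref{lem:as_poset_map} it suffices to check that they agree on each generator $\pi_1(\Op_a)$. A direct unfolding of the definition gives
\begin{equation}
    \As(\psi \circ \phi)(\pi_1(\Op_a))
    = \pi_3(\Op_{\psi(\phi(a))})
    = \As(\psi)(\pi_2(\Op_{\phi(a)}))
    = \As(\psi)(\As(\phi)(\pi_1(\Op_a))),
\end{equation}
which is exactly the desired equality.

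There is no genuine obstacle here; the real content of the functoriality statement is Lemma~\ref{lem:as_poset_map}, which in turn rests on Lemma~\ref{lem:morphism_posets_min} guaranteeing that poset morphisms commute with the partial operation $\Min_\Pca$ used to define the relations~\eqref{equ:relation_1} and~\eqref{equ:relation_2}. Once this lemma is in hand, the theorem reduces to the two short verifications above, both of which amount to evaluating on generators and invoking the uniqueness of the extension.
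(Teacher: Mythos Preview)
Your proposal is correct and follows the same approach as the paper, namely invoking the definition of $\As$ on objects and Lemma~\ref{lem:as_poset_map} on morphisms. In fact you are more thorough: the paper's proof stops after citing these two ingredients and declares the theorem to follow, whereas you explicitly verify preservation of identities and compatibility with composition via the uniqueness clause of Lemma~\ref{lem:as_poset_map}.
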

\begin{proof}
    By definition, the construction $\As$ sends any poset $\Pca$ to an
    operad $\As(\Pca)$, defined by its presentation which is binary and
    quadratic. Moreover, by Lemma~\ref{lem:as_poset_map}, $\As$ sends
    any morphism of posets $\phi$ to a morphism of operads $\As(\phi)$.
    It follows the statement of the theorem.
\end{proof}
\medskip

\subsubsection{First examples} \label{subsubsec:first_examples}
Let us use Theorem~\ref{thm:as_poset_functor} to exhibit some examples
of constructions of binary an quadratic operads from posets.
\medskip

From the poset
\begin{equation}
    \Pca :=
    \begin{tikzpicture}
        [baseline=(current bounding box.center),xscale=.4,yscale=.4]
        \node[Vertex](1)at(0,0){\begin{math}1\end{math}};
        \node[Vertex](2)at(2,0){\begin{math}2\end{math}};
        \node[Vertex](3)at(1,-1){\begin{math}3\end{math}};
        \node[Vertex](4)at(3,-1){\begin{math}4\end{math}};
        \draw[Edge](1)--(3);
        \draw[Edge](2)--(3);
        \draw[Edge](2)--(4);
    \end{tikzpicture}\,,
\end{equation}
the operad $\As(\Pca)$ is binary and quadratic, generated by the set
$\FreeGen_\Pca^\Op = \{\Op_1, \Op_2, \Op_3, \Op_4\}$, and,
by Lemma~\ref{lem:relations}, submitted to the relations
\begin{subequations}
\begin{equation}
    \Op_1 \circ_1 \Op_1
    = \Op_1 \circ_1 \Op_3 = \Op_3 \circ_1 \Op_1
    = \Op_3 \circ_2 \Op_1 = \Op_1 \circ_2 \Op_3
    = \Op_1 \circ_2 \Op_1,
\end{equation}
\begin{equation}\begin{split}
    \Op_2 \circ_1 \Op_2
    = \Op_2 \circ_1 \Op_3 & = \Op_2 \circ_1 \Op_4
    = \Op_3 \circ_1 \Op_2 = \Op_4 \circ_1 \Op_2
    \\ & = \Op_4 \circ_2 \Op_2 = \Op_3 \circ_2 \Op_2
    = \Op_2 \circ_2 \Op_4 = \Op_2 \circ_2 \Op_3
    = \Op_2 \circ_2 \Op_2,
\end{split}\end{equation}
\begin{equation}
    \Op_3 \circ_1 \Op_3 = \Op_3 \circ_2 \Op_3,
\end{equation}
\begin{equation}
    \Op_4 \circ_1 \Op_4 = \Op_4 \circ_2 \Op_4.
\end{equation}
\end{subequations}
\medskip

Besides, when $\Pca$ is the trivial poset on the set $[\ell]$,
$\ell \geq 0$, the operad $\As(\Pca)$ is generated by the set
$\FreeGen_\Pca^\Op = \{\Op_1, \dots, \Op_\ell\}$ and, by
Lemma~\ref{lem:relations}, submitted to the relations
\begin{equation}
    \Op_a \circ_1 \Op_a = \Op_a \circ_2 \Op_a,
    \qquad a \in [\ell].
\end{equation}
In particular, when $\ell = 0$, $\As(\Pca)$ is the trivial operad, when
$\ell = 1$, $\As(\Pca)$ is the associative operad, and when $\ell = 2$,
$\As(\Pca)$ is the operad $\TwoAs$~\cite{LR06}. These operads, for a generic
$\ell \geq 0$, have been introduced in~\cite{Gir15b} under the name of
{\em dual multiassociative operads}. These operads can be realized using
Schröder trees~\cite{Sta11} with labelings satisfying some conditions.
In Section~\ref{subsubsec:realization_as_poset_forest}, we shall
describe a generalized version of this realization.
\medskip

Furthermore, when $\Pca$ is the total order on the set $[\ell]$,
$\ell \geq 0$, the operad $\As(\Pca)$ is generated by the set
$\FreeGen_\Pca^\Op = \{\Op_1, \dots, \Op_\ell\}$ and, by
Lemma~\ref{lem:relations}, submitted to the relations
\begin{equation}
    \Op_a \circ_1 \Op_a =
    \Op_a \circ_1 \Op_b = \Op_b \circ_1 \Op_a =
    \Op_b \circ_2 \Op_a = \Op_a \circ_2 \Op_b =
    \Op_a \circ_2 \Op_a,
    \qquad a \leq b \in [\ell].
\end{equation}
In particular, when $\ell = 0$, $\As(\Pca)$ is the trivial operad and
when $\ell = 1$, $\As(\Pca)$ is the associative operad. These operads,
for a generic $\ell \geq 0$, have been introduced and studied
in~\cite{Gir15b} under the name of {\em multiassociative operads}. They
have the particularity to have stationary dimensions since
$\dim \As(\Pca)(1) = 1$ and $\dim \As(\Pca)(n) = \ell$ for all~$n \geq 2$.
\medskip

\subsection{General properties}
Let us now list some general properties of the operad $\As(\Pca)$ where
$\Pca$ is a poset without particular requirements. We provide the
dimension of the space of relations of $\As(\Pca)$, describe its
associative elements, and give a necessary and sufficient condition for
its basicity.
\medskip

\subsubsection{Space of relations dimensions}

\begin{Proposition} \label{prop:dimension_relations}
    Let $\Pca$ be a poset. Then, the dimension of the space
    $\FreeRel_\Pca^\Op$ of relations of $\As(\Pca)$ satisfies
    \begin{equation} \label{equ:dimension_relations}
        \dim \FreeRel_\Pca^\Op = 4 \, \NbInterv(\Pca) - 3 \, \# \Pca.
    \end{equation}
\end{Proposition}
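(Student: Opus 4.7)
The plan is to compute $\dim \FreeRel_\Pca^\Op$ by viewing it as a subspace of the $2(\#\Pca)^2$-dimensional vector space $\Free(\FreeGen_\Pca^\Op)(3)$, whose basis consists of the syntax trees $\Op_x \circ_i \Op_y$ with $x, y \in \Pca$ and $i \in \{1, 2\}$. Every generator of $\FreeRel_\Pca^\Op$ listed in~\eqref{equ:relation_1} and~\eqref{equ:relation_2} is a difference $u - v$ of two distinct basis trees, so I would invoke the standard fact that the dimension of a subspace spanned by such differences equals the number of basis vectors minus the number of equivalence classes under the relation $\sim$ generated by $u \sim v$ whenever $u - v$ is a generator.

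To enumerate these classes, I would use Lemma~\ref{lem:relations}, which guarantees that for each $a \in \Pca$ all elements of $R_a$ lie in a single $\sim$-class. The crucial observation is that a tree $\Op_x \circ_i \Op_y$ belongs to some $R_a$ exactly when $x$ and $y$ are comparable in $\Pca$, in which case $a$ is forced to equal $x \Min_\Pca y$. Therefore the sets $R_a$ are pairwise disjoint, their union is precisely the set of trees with comparable indices, and they contribute $\#\Pca$ classes. Trees $\Op_x \circ_i \Op_y$ with $x$ and $y$ incomparable are untouched by any generator and each forms a singleton class.

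The final step is a counting exercise. The number of ordered comparable pairs $(x, y) \in \Pca^2$ equals $2 \NbInterv(\Pca) - \#\Pca$, since each interval with $x \OrdStrict_\Pca y$ contributes two ordered pairs while each diagonal pair is counted once. Hence the incomparable-pair trees number $2\bigl((\#\Pca)^2 - 2\NbInterv(\Pca) + \#\Pca\bigr)$, and the total count of classes is
\begin{equation}
    \#\Pca + 2\bigl((\#\Pca)^2 - 2\NbInterv(\Pca) + \#\Pca\bigr)
    = 2(\#\Pca)^2 - 4\NbInterv(\Pca) + 3\#\Pca.
\end{equation}
Subtracting from $2(\#\Pca)^2$ yields the claimed formula. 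The main subtlety to justify is that no two distinct $R_a$ get identified by some longer chain of relations and that no tree outside the $R_a$'s is touched; both points are ensured by the disjointness observation and the fact that each generator has the shape specified in~\eqref{equ:relation_1} and~\eqref{equ:relation_2}.
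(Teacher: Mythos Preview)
Your argument is correct, and it takes a genuinely different route from the paper's. The paper exhibits an explicit direct-sum decomposition $\FreeRel_\Pca^\Op = \FreeRel_1 \oplus \FreeRel_2 \oplus \FreeRel_3$, where $\FreeRel_1$ (resp.\ $\FreeRel_2$) is spanned by the generators~\eqref{equ:relation_1} (resp.~\eqref{equ:relation_2}) with $a \ne b$, and $\FreeRel_3$ by the associativity relations $\Op_a \circ_1 \Op_a - \Op_a \circ_2 \Op_a$; linear independence of each family and directness of the sum are read off from the supports of the generators, and the three dimensions add up to the stated formula. Your approach instead computes the \emph{codimension} of $\FreeRel_\Pca^\Op$ in $\Free(\FreeGen_\Pca^\Op)(3)$ via the graph-theoretic principle that the span of basis-vector differences has codimension equal to the number of connected components. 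This is a cleaner global viewpoint: it identifies the quotient $\Free(\FreeGen_\Pca^\Op)(3)/\FreeRel_\Pca^\Op$ with the span of the classes $R_a$ together with the incomparable-label singletons, which incidentally gives directly that $\dim \As(\Pca)(3) = 2(\#\Pca)^2 - 4\,\NbInterv(\Pca) + 3\,\#\Pca$. The paper's method, on the other hand, produces an explicit basis of $\FreeRel_\Pca^\Op$, which can be handy for later manipulations. One small remark: your appeal to Lemma~\ref{lem:relations} to conclude that each $R_a$ is a single $\sim$-class is legitimate, but it is even more direct to observe combinatorially that every $\Op_x \circ_1 \Op_y$ with $x,y$ comparable is linked by a single generator~\eqref{equ:relation_1} to the hub $\Op_{x \Min_\Pca y} \circ_2 \Op_{x \Min_\Pca y}$, and symmetrically for~\eqref{equ:relation_2}, while the case $a=b$ of~\eqref{equ:relation_1} joins the two hubs.
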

\begin{proof}
    Consider the space $\FreeRel_1$ generated by the elements
    of~\eqref{equ:relation_1} with $a \ne b$. Since these elements are
    linearly independent and are totally specified by two different
    elements $a$ and $b$ of $\Pca$ such that $a$ and $b$ are comparable
    in $\Pca$, we have
    \begin{equation}
        \dim \FreeRel_1 = 2 \left(\NbInterv(\Pca) - \# \Pca\right).
    \end{equation}
    For the same reason, the dimension of the space $\FreeRel_2$
    generated by the elements of~\eqref{equ:relation_2} with $a \ne b$
    satisfies $\dim \FreeRel_2 = \dim \FreeRel_1$. The space $\FreeRel_3$
    generated by the elements of~\eqref{equ:relation_1}
    and~\eqref{equ:relation_2} with $a = b$ consists in the space
    generated by $\Op_a \circ_1 \Op_a - \Op_a \circ_2 \Op_a$,
    $a \in \Pca$, and is hence of dimension $\# \Pca$. Therefore, since
    we have
    \begin{equation}
        \FreeRel_\Pca^\Op =
        \FreeRel_1 \oplus \FreeRel_2 \oplus \FreeRel_3,
    \end{equation}
    we obtain the stated formula~\eqref{equ:dimension_relations} for
    $\dim \FreeRel_\Pca^\Op$ by summing the dimensions of $\FreeRel_1$,
    $\FreeRel_2$, and~$\FreeRel_3$.
\end{proof}
\medskip

\subsubsection{Associative elements}

\begin{Proposition} \label{prop:associative_elements}
    Let $\Pca$ be a poset and
    $C := \{c_1 \OrdStrict_\Pca \dots \OrdStrict_\Pca c_\ell\}$
    be a chain of $\Pca$. Then, the linear span $\Cca_C$ of the set
    $\{\pi(\Op_{c_1}), \dots, \pi(\Op_{c_\ell})\}$ contains only
    associative elements of $\As(\Pca)$, where
    $\pi : \Free(\FreeGen_\Pca^\Op) \to \As(\Pca)$ is the canonical
    projection. Conversely, any associative element of $\As(\Pca)$
    is an element of $\Cca_C$ for a chain $C$ of $\Pca$.
\end{Proposition}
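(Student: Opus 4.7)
The forward direction is essentially immediate from Lemma~\ref{lem:relations}: for any two comparable elements $a, b \in \Pca$, both trees $\Op_a \circ_1 \Op_b$ and $\Op_a \circ_2 \Op_b$ belong to the set $R_{a \Min_\Pca b}$, so their images under $\pi$ coincide in $\As(\Pca)(3)$. Therefore, for $x := \sum_{i = 1}^{\ell} \lambda_i\, \pi(\Op_{c_i})$ in $\Cca_C$, the associator
\begin{equation*}
    x \circ_1 x - x \circ_2 x
    = \sum_{i, j \in [\ell]} \lambda_i \lambda_j\,
        \pi\!\left(\Op_{c_i} \circ_1 \Op_{c_j}
        - \Op_{c_i} \circ_2 \Op_{c_j}\right)
\end{equation*}
vanishes term by term, since every pair $c_i, c_j$ is comparable on the chain~$C$.

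For the converse, I consider an arbitrary associative element $x = \sum_{a \in \Pca} \lambda_a\, \pi(\Op_a)$, which is a well-defined expansion because $\{\pi(\Op_a)\}_{a \in \Pca}$ is a basis of $\As(\Pca)(2)$ (the relations~\eqref{equ:relation_1} and~\eqref{equ:relation_2} being concentrated in arity $3$). The same computation as above reduces the associator to
\begin{equation*}
    x \circ_1 x - x \circ_2 x
    = \sum_{(a, b) \text{ incomparable in } \Pca}
        \lambda_a \lambda_b\,
        \pi\!\left(\Op_a \circ_1 \Op_b - \Op_a \circ_2 \Op_b\right),
\end{equation*}
and the strategy is then to establish that the family of elements $\pi(\Op_a \circ_i \Op_b)$ for ordered incomparable pairs $(a, b)$ and $i \in \{1, 2\}$ is linearly independent in $\As(\Pca)(3)$. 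That independence forces $\lambda_a \lambda_b = 0$ for every incomparable $(a, b)$, whence the support of $\lambda$ must be a chain of~$\Pca$, as wanted.

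To secure the linear independence, I would introduce an auxiliary vector space $V$ freely spanned by $\{e_m : m \in \Pca\} \cup \{f^i_{a, b} : (a, b) \in \Pca^2 \text{ incomparable},\ i \in \{1, 2\}\}$, together with a linear map $\bar\alpha : \As(\Pca)(3) \to V$ induced from the assignment on $\Free(\FreeGen_\Pca^\Op)(3)$ sending each generator $\Op_a \circ_i \Op_b$ to $e_{a \Min_\Pca b}$ when $a$ and $b$ are comparable, and to $f^i_{a, b}$ otherwise. The main obstacle is to verify that this assignment vanishes on the generating relations~\eqref{equ:relation_1} and~\eqref{equ:relation_2} of $\FreeRel_\Pca^\Op$, which amounts to the identity $(a \Min_\Pca b) \Min_\Pca (a \Min_\Pca b) = a \Min_\Pca b$ valid for any comparable $a, b$; once that is checked, $\bar\alpha$ descends to $\As(\Pca)(3)$. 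Evaluating $\bar\alpha$ on the reduced associator then yields $\sum \lambda_a \lambda_b\, (f^1_{a, b} - f^2_{a, b})$, a combination of distinct basis vectors of $V$ indexed by the incomparable pairs in the support of $\lambda$, whose vanishing gives the required chain condition.
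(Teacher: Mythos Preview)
Your proof is correct and follows essentially the same route as the paper: both expand the associator, kill the comparable-pair terms via Lemma~\ref{lem:relations}, and then read off $\lambda_a\lambda_b = 0$ for each incomparable pair. The only difference is that the paper asserts this last implication directly (implicitly using that every generator of $\FreeRel_\Pca^\Op$ involves only comparable labels, so the incomparable trees remain independent in the quotient), whereas you spell it out via the auxiliary map~$\bar\alpha$.
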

\begin{proof}
    Consider the element
    \begin{equation} \label{equ:associative_elements_x}
        x := \sum_{a \in \Pca} \lambda_a \, \Op_a
    \end{equation}
    of $\Free(\FreeGen_\Pca^\Op)$, where $\lambda_a \in \K$ for all
    $a \in \Pca$, such that $\pi(x)$ is associative in $\As(\Pca)$. Then,
    since we have $\pi(r) = 0$ for all elements $r$ of $\FreeRel_\Pca^\Op$,
    by Lemma~\ref{lem:relations}, we have
    \begin{equation}\begin{split} \label{equ:associative_elements}
        \pi(x \circ_1 x - x \circ_2 x)
            & = \sum_{a, b \in \Pca}
            \lambda_a \lambda_b \,
            \pi(\Op_a \circ_1 \Op_b - \Op_a \circ_2 \Op_b) \\
            & = \sum_{\substack{a, b \in \Pca \\ a \Ord_\Pca b}}
            \lambda_a \lambda_b \,
            \pi(\Op_a \circ_1 \Op_b - \Op_a \circ_2 \Op_b)
            +
            \sum_{\substack{a, b \in \Pca \\ b \OrdStrict_\Pca a}}
            \lambda_a \lambda_b \,
            \pi(\Op_a \circ_1 \Op_b - \Op_a \circ_2 \Op_b) \\
            & \qquad +
            \sum_{\substack{a, b \in \Pca \\ a \not \Ord_\Pca b \\
                b \not \Ord_\Pca a}}
            \lambda_a \lambda_b \, \pi(\Op_a \circ_1 \Op_b -
                \Op_a \circ_2 \Op_b) \\
            & = \sum_{\substack{a, b \in \Pca \\ a \not \Ord_\Pca b \\
                b \not \Ord_\Pca a}}
             \lambda_a \lambda_b \,
             \pi(\Op_a \circ_1 \Op_b - \Op_a \circ_2 \Op_b).
    \end{split}\end{equation}
    Now, the fact that $\pi(x)$ is associative, that is
    $\pi(x \circ_1 x - x \circ_2 x) = 0$, is equivalent to the fact that
    the relations
    \begin{equation} \label{equ:associative_elements_coefficients_condition}
        \lambda_a \lambda_b = 0,
        \qquad a, b \in \Pca, a \not \Ord_\Pca b,
        \mbox{ and } b \not \Ord_\Pca a
    \end{equation}
    are satisfied. This implies that $x$ expresses as
    \begin{equation} \label{equ:associative_elements_sum_chain}
        x = \sum_{c \in C} \lambda_c \, \Op_c,
    \end{equation}
    where $C$ is a chain of $\Pca$. Therefore, $\pi(x)$ is an element of
    $\Cca_C$. Conversely, let $x$ be an element of
    $\Free(\FreeGen_\Pca^\Op)$ of the
    form~\eqref{equ:associative_elements_sum_chain} for a chain $C$ of
    $\Pca$. Since the coefficients of $x$
    satisfy~\eqref{equ:associative_elements_coefficients_condition},
    by~\eqref{equ:associative_elements},
    $\pi(x \circ_1 x - x \circ_2 x) = 0$. Hence, $x$ is associative.
\end{proof}
\medskip

\subsubsection{Basicity}
An operad $\Oca$ is {\em basic}~\cite{Val07} if for any
nonzero element $y$ of $\Oca(m)$, $m \geq 1$, all the maps
\begin{equation}
    \circ^y_i : \Oca(n) \to \Oca(n + m - 1),
    \qquad i \in [n],
\end{equation}
linearly defined by
\begin{equation}
    \circ^y_i(x) := x \circ_i y,
    \qquad x \in \Oca(n),
\end{equation}
are injective. Notice that this definition of the basicity property for
operads is a slight but equivalent variant of the one appearing
in~\cite{Val07}.
\medskip

\begin{Proposition} \label{prop:basic_operad}
    Let $\Pca$ be a poset. The operad $\As(\Pca)$ is basic if and only
    if $\Pca$ is a trivial poset.
\end{Proposition}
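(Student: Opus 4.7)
The argument splits in two.

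For the only-if direction, I argue the contrapositive. If $\Pca$ is not trivial, choose two distinct comparable elements $a, b \in \Pca$ with $a \Ord_\Pca b$. By Lemma~\ref{lem:relations}, the set $R_a$ contains both $\Op_a \circ_1 \Op_a$ (taking $b := a$) and $\Op_b \circ_1 \Op_a$, so their difference lies in $\FreeRel_\Pca^\Op$. Hence $(\Op_b - \Op_a) \circ_1 \Op_a = 0$ in $\As(\Pca)$. The space $\FreeRel_\Pca^\Op$ is concentrated in $\Free(\FreeGen_\Pca^\Op)(3)$, so $\{\Op_c : c \in \Pca\}$ remains linearly independent in $\As(\Pca)(2)$, whence $\Op_b - \Op_a \ne 0$. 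Taking $y := \Op_a$, the map $\circ_1^y$ has nontrivial kernel, so $\As(\Pca)$ is not basic.

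For the if direction, suppose $\Pca$ is trivial, so that the relations reduce to $\Op_a \circ_1 \Op_a = \Op_a \circ_2 \Op_a$ for $a \in \Pca$. Orient these as $\Op_a \circ_1 \Op_a \Rew \Op_a \circ_2 \Op_a$. A standard termination argument shows this rule is terminating, and by Lemma~\ref{lem:confluent_few_nodes} it is enough to check joinability of critical pairs on trees with at most three internal nodes; the only such critical tree is, for each $a$, the left-comb of three $a$-labeled nodes, whose two reductions both collapse to $\Op_a \circ_2 (\Op_a \circ_2 \Op_a)$. The rule is therefore convergent, and by Lemma~\ref{lem:koszulity_criterion_pbw} the normal forms constitute a PBW basis $\NormalF$ of $\As(\Pca)$: the syntax trees on $\FreeGen_\Pca^\Op$ in which no internal node has an internal leftmost child with the same label.

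To deduce basicity, given a nonzero $y \in \As(\Pca)(m)$ and an index $i$, I would build a combinatorial left inverse for $\circ_i^y$ on $\NormalF$. The key observation is that, for basis elements $\Sfr \in \As(\Pca)(n)$ and $\Rfr \in \As(\Pca)(m)$, the grafting $\Sfr \circ_i \Rfr$ in the free operad fails to be in $\NormalF$ only at the grafting site, and only when the root label of $\Rfr$ matches the label of the node of $\Sfr$ above leaf $i$; in that case a single rewriting step restores normality. One can thus locate, within the normal form of $\Sfr \circ_i \Rfr$, the middle subtree corresponding to $\Rfr$ and uniquely recover $\Sfr$. For $y = \sum_j c_j \Rfr_j$, one selects a distinguished summand $\Rfr_{j_0}$ using a monomial order refining the rewriting, so that its contribution to the normal form of $\Sfr \circ_i y$ is identifiable and yields $\Sfr$. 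The principal obstacle is to verify that this selection disentangles the summands correctly despite their possibly differing normalization behaviors with respect to a given $\Sfr$, which is ensured by a careful choice of the monomial order respecting the normal-form condition.
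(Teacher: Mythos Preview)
Your only-if direction is correct and essentially identical to the paper's argument.

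The if direction has a genuine gap. After correctly setting up a convergent rewriting and identifying the normal forms for trivial $\Pca$ (this is the trivial-poset special case of what the paper later establishes as Lemma~\ref{lem:normal_forms}), you outline a left-inverse construction for $\circ_i^y$ but do not carry it out: you name its ``principal obstacle'' --- that for $x = \sum_\ell \alpha_\ell \Sfr_\ell$ and $y = \sum_k c_k \Rfr_k$ the normal forms of $\Sfr_\ell \circ_i \Rfr_k$ may coincide across different pairs $(\ell,k)$ --- and then dismiss it via an unspecified ``careful choice of the monomial order''. That is precisely the point requiring proof, and a monomial order is not the natural tool here. The clean fix is to note that your convergent rewriting makes $\As(\Pca)$ a set operad on normal forms (the composite of two normal forms is a single normal form); injectivity of every $\circ_i^y$ then reduces to two-sided cancellation, namely that $(\Sfr,\Rfr) \mapsto \Sfr \circ_i \Rfr$ is injective on pairs of normal forms of fixed arities $(n,m)$. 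This is a direct combinatorial check: knowing $i$ and $m$, one locates the leaves contributed by $\Rfr$ in the composite, and the at-most-one rewriting step at the grafting edge is reversible, so both $\Sfr$ and $\Rfr$ are recovered.

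For comparison, the paper's if direction is much shorter and does not pass through the PBW basis at all: it observes that for trivial $\Pca$ each generating relation $\Op_a \circ_1 \Op_a - \Op_a \circ_2 \Op_a$ has the same root label on both sides, so two syntax trees can be identified in $\As(\Pca)$ only if their root labels agree, and infers basicity from this.
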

\begin{proof}
    Assume that $\Pca$ is not a trivial poset. Then, there are two
    elements $a$ and $b$ in $\Pca$ such that $a \OrdStrict_\Pca b$. By
    denoting by $\pi : \Free(\FreeGen_\Pca^\Op) \to \As(\Pca)$ the
    canonical projection, we have
    \begin{equation}
        \circ^{\pi(\Op_a)}_1(\pi(\Op_a)) =
        \pi(\Op_a \circ_1 \Op_a)
    \end{equation}
    and
    \begin{equation} \label{equ:basic_operad}
        \circ^{\pi(\Op_a)}_1(\pi(\Op_b)) =
        \pi(\Op_b \circ_1 \Op_a) =
        \pi(\Op_a \circ_1 \Op_a).
    \end{equation}
    The second equality of~\eqref{equ:basic_operad} is a consequence
    of the fact that, by Lemma~\ref{lem:relations}, the element
    $\Op_b \circ_1 \Op_a - \Op_a \circ_1 \Op_a$ belongs to the space
    $\FreeRel_\Pca^\Op$ of relations of $\As(\Pca)$. Since $a \ne b$,
    $\pi(\Op_a) \ne \pi(\Op_b)$, showing that the map
    $\circ^{\pi(\Op_a)}_1$ is not injective and thus, that $\As(\Pca)$
    is not basic.
    \smallskip

    Conversely assume that $\Pca$ is a trivial poset. The space
    $\FreeRel_\Pca^\Op$ of relations $\As(\Pca)$ is hence generated by
    the $\Op_a \circ_1 \Op_a - \Op_a \circ_2 \Op_a$, $a \in \Pca$.
    Therefore, for any syntax trees $\Sfr$ and $\Tfr$ of
    $\Free(\FreeGen_\Pca^\Op)$, $\Sfr - \Tfr$ is in
    $\langle \FreeRel_\Pca^\Op \rangle$ only if the roots of $\Sfr$ and
    $\Tfr$ have the same label. This implies that for all nonzero
    elements $y$ of $\Oca$, all the maps $\circ^y_i$ are injective. Thus,
    $\As(\Pca)$ is basic.
\end{proof}
\medskip

\subsection{Algebras over poset associative operads}
Let $\Pca$ be a poset. From the presentation
$(\FreeGen_\Pca^\Op, \FreeRel_\Pca^\Op)$ of the operad $\As(\Pca)$
provided by its definition in Section~\ref{subsubsec:definition_as}, an
$\As(\Pca)$-algebra is a vector space $\Alg$ endowed with linear
operations
\begin{equation}
    \Op_a : \Alg \otimes \Alg \to \Alg,
    \qquad a \in \Pca,
\end{equation}
satisfying, for all $x, y, z \in \Alg$, the relations
\begin{equation} \label{equ:relations_algebras}
    (x \Op_a y) \Op_b z
    = x \Op_a (y \Op_b z)
    = (x \Op_c y) \Op_a z
    = x \Op_c (y \Op_a z),
    \qquad a, b, c \in \Pca
    \mbox{ and } a \Ord_\Pca b \mbox{ and } a \Ord_\Pca c.
\end{equation}
We call {\em $\Pca$-associative algebra} any $\As(\Pca)$-algebra.
\medskip

We shall exhibit two examples of $\Pca$-associative algebras in the
sequel: in Section~\ref{subsubsec:free_as_forest_poset_algebras}, free
$\Pca$-associative algebras over one generator when $\Pca$ is a forest
poset and in Section~\ref{subsubsec:antichains_algebra},
$\Pca$-associative algebras involving the antichains of the poset $\Pca$.
\medskip

\subsubsection{Units}
Let $\Pca$ be a poset and $\Alg$ be a $\Pca$-associative algebra. An
{\em $a$-unit}, $a \in \Pca$, of $\Alg$ is an element $\Unit_a$ of
$\Alg$ satisfying
\begin{equation}
    \Unit_a \Op_a x = x = x \Op_a \Unit_a
\end{equation}
for all $x \in \Alg$. Obviously, for any $a \in \Pca$ there is at most
one $a$-unit in $\Alg$.
\medskip

Besides, for any element $x$ of $\Alg$, we denote by $\UnitSet_\Alg(x)$
the set of elements $a$ of $\Pca$ such that $x$ is an $a$-unit of $\Alg$.
Obviously, if $\Unit_a$ is an $a$-unit of $\Alg$,
$a \in \UnitSet_\Alg(\Unit_a)$.
\medskip

\begin{Proposition} \label{prop:units}
    Let $\Pca$ be a poset and $\Alg$ be a $\Pca$-associative algebra.
    Then:
    \begin{enumerate}[label={\it (\roman*)}]
        \item \label{item:units_1}
        for any element $x$ of $\Alg$, $\UnitSet_\Alg(x)$ is an order
        filter of $\Pca$;
        \item \label{item:units_2}
        for all elements $x$ and $y$ of $\Alg$ such that $x \ne y$, the
        sets $\UnitSet_\Alg(x)$ and $\UnitSet_\Alg(y)$ are disjoint.
    \end{enumerate}
\end{Proposition}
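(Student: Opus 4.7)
The plan is to prove both parts by leveraging Lemma~\ref{lem:relations}, which yields many more relations in $\FreeRel_\Pca^\Op$ than the two defining families~\eqref{equ:relation_1} and~\eqref{equ:relation_2}.

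For part~\ref{item:units_1}, I aim to establish the stronger fact that an $a$-unit of $\Alg$ forces $\Op_b$ to coincide with $\Op_a$ as an operation on $\Alg$ whenever $a \Ord_\Pca b$; the claim about $\UnitSet_\Alg(x)$ being upward-closed then follows at once. The key observation is that both $\Op_a \circ_1 \Op_a$ (taking the element $b = a$ in the definition of $R_a$) and $\Op_b \circ_1 \Op_a$ belong to the set $R_a$ of Lemma~\ref{lem:relations}, so their difference lies in $\FreeRel_\Pca^\Op$. Translating this operadic identity into the algebra $\Alg$ gives
\begin{equation*}
    (u \Op_a v) \Op_a w = (u \Op_a v) \Op_b w
\end{equation*}
for all $u, v, w \in \Alg$. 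Specializing $u$ to the $a$-unit $x$, both sides simplify via $x \Op_a v = v$ to yield $v \Op_a w = v \Op_b w$ for all $v, w \in \Alg$. Hence $\Op_a$ and $\Op_b$ coincide on $\Alg$, and so $x$ is also a $b$-unit, showing that $\UnitSet_\Alg(x)$ is an order filter of~$\Pca$.

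For part~\ref{item:units_2}, the standard uniqueness-of-units argument suffices: if $a$ belongs to $\UnitSet_\Alg(x) \cap \UnitSet_\Alg(y)$, then $x \Op_a y$ equals $y$ (because $x$ is a left $a$-unit) and also equals $x$ (because $y$ is a right $a$-unit), forcing $x = y$. Contrapositively, $x \ne y$ implies $\UnitSet_\Alg(x) \cap \UnitSet_\Alg(y) = \emptyset$.

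The main subtlety is spotting the identification $\Op_a \circ_1 \Op_a \equiv \Op_b \circ_1 \Op_a$ among those ensured by Lemma~\ref{lem:relations}, since this is precisely what collapses $\Op_b$ onto $\Op_a$ in the presence of an $a$-unit; once this relation is extracted, both assertions reduce to a single algebraic manipulation and the classical unit-uniqueness trick, respectively.
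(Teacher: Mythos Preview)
Your proof is correct and uses essentially the same approach as the paper: both arguments for~\ref{item:units_1} rest on the relation $(u \Op_a v) \Op_a w = (u \Op_a v) \Op_b w$ coming from Lemma~\ref{lem:relations} (equivalently~\eqref{equ:relations_algebras}), and part~\ref{item:units_2} is handled identically via the classical unit-uniqueness argument. The only minor difference is that you specialize $u = x$ to deduce the stronger fact $\Op_a = \Op_b$ on all of $\Alg$, whereas the paper takes $u = v = x$ and concludes only that $x$ itself is a $b$-unit.
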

\begin{proof}
    Assume that $x$ is an element of $\Alg$ such that there is an element
    $a$ of $\Pca$ satisfying $a \in \UnitSet_\Alg(x)$. Then, by
    definition of $\UnitSet_\Alg(x)$, $x$ is an $a$-unit of $\Alg$. Thus,
    for all $b$ of $\Pca$ such that $a \Ord_\Pca b$ and all elements $y$
    of $\Alg$, we have by~\eqref{equ:relations_algebras},
    \begin{equation}
        x \Op_b y = (x \Op_a x) \Op_b y
        = (x \Op_a x) \Op_a y
        = x \Op_a y
        = y.
    \end{equation}
    By using similar arguments, we obtain that $y \Op_b x = y$, showing
    that $x$ is a $b$-unit and hence, that $b \in \UnitSet_\Alg(x)$.
    This establishes~\ref{item:units_1}.
    \smallskip

    Let us now consider two elements $x$ and $y$ of $\Alg$ such that
    there is an element $a$ of $\Pca$ satisfying
    $a \in \UnitSet_\Alg(x) \cap \UnitSet_\Alg(y)$. Then, by definition
    of $\UnitSet_\Alg(x)$ and $\UnitSet_\Alg(y)$, $x$ and $y$ are both
    $a$-units of $\Alg$, implying that $x = y$. This
    implies~\ref{item:units_2}.
\end{proof}
\medskip

Proposition~\ref{prop:units} implies that the sets $\UnitSet_\Alg(x)$,
$x \in \Alg$, form a partition of an order filter of~$\Pca$ where each
part is itself an order filter of $\Pca$.
\medskip

\subsubsection{Antichains algebra}%
\label{subsubsec:antichains_algebra}
Let $\Pca$ be a poset and set $\Xbb_\Pca := \{x_a : a \in \Pca\}$ as
a set of commutative parameters and consider the commutative and
associative polynomial algebra $\K[\Xbb_\Pca]/_{\Ideal_\Pca}$, where
$\Ideal_\Pca$ is the ideal of $\K[\Xbb_\Pca]$ generated by
\begin{equation}
    x_a x_b - x_a, \qquad a \Ord_\Pca b \in \Pca.
\end{equation}
Then, one observes that $x_{a_1} \dots x_{a_k}$ is a reduced monomial of
$\K[\Xbb_\Pca]/_{\Ideal_\Pca}$ if and only if the set
$\{a_1, \dots, a_k\}$ is an antichain of $\Pca$ of size $k$.
\medskip

We endow $\K[\Xbb_\Pca]/_{\Ideal_\Pca}$ with linear operations
\begin{equation}
    \Op_a :
    \K[\Xbb_\Pca]/_{\Ideal_\Pca} \otimes
    \K[\Xbb_\Pca]/_{\Ideal_\Pca} \to
    \K[\Xbb_\Pca]/_{\Ideal_\Pca},
    \qquad a \in \Pca,
\end{equation}
defined, for all reduced monomials $x_{b_1} \dots x_{b_k}$ and
$x_{c_1} \dots x_{c_\ell}$ of $\K[\Xbb]/_{\Ideal_\Pca}$, by
\begin{equation}
    x_{b_1} \dots x_{b_k} \Op_a x_{c_1} \dots x_{c_\ell} :=
    \pi(x_{b_1} \dots x_{b_k} x_a x_{c_1} \dots x_{c_\ell}),
\end{equation}
where $\pi : \K[\Xbb_\Pca] \to \K[\Xbb_\Pca]/_{\Ideal_\Pca}$ is the
canonical projection. These operations $\Op_a$, $a \in \Pca$, endow
$\K[\Xbb_\Pca]/_{\Ideal_\Pca}$ with a structure of a $\Pca$-associative
algebra.
\medskip

Consider for instance the poset
\begin{equation}
    \Pca :=
    \begin{tikzpicture}
        [baseline=(current bounding box.center),xscale=.45,yscale=.45]
        \node[Vertex](1)at(0,0){\begin{math}1\end{math}};
        \node[Vertex](2)at(-1,-1){\begin{math}2\end{math}};
        \node[Vertex](3)at(0,-1){\begin{math}3\end{math}};
        \node[Vertex](4)at(1,-1){\begin{math}4\end{math}};
        \node[Vertex](5)at(0,-2){\begin{math}5\end{math}};
        \draw[Edge](1)--(2);
        \draw[Edge](1)--(3);
        \draw[Edge](1)--(4);
        \draw[Edge](2)--(5);
        \draw[Edge](4)--(5);
    \end{tikzpicture}\,.
\end{equation}
The space $\K[\Xbb_\Pca]/_{\Ideal_\Pca}$ is the linear span of the
reduced monomials
\begin{equation}
    x_1, \; x_2, \; x_3, \; x_4, \; x_5, \;
    x_2 x_3, \; x_2 x_4, \; x_3 x_4, \; x_3 x_5, \;
    x_2 x_3 x_4,
\end{equation}
and one has for instance
\begin{subequations}
\begin{equation}
    x_2 \Op_3 x_4 = x_2 x_3 x_4,
\end{equation}
\begin{equation}
    x_2 x_3 \Op_1 x_4 = x_1,
\end{equation}
\begin{equation}
    x_2 x_3 \Op_5 x_4 = x_2 x_3 x_4.
\end{equation}
\end{subequations}
\medskip

\section{Forest posets, Koszul duality, and Koszulity}%
\label{sec:forest_posets}
Here, we focus on the construction $\As$ when the input poset $\Pca$
of the construction is a forest poset. In this case, we show that
$\As(\Pca)$ is Koszul, provide a realization of $\As(\Pca)$, and obtain
a functional equation for its Hilbert series. We end this section by
computing presentations of the Koszul dual of $\As(\Pca)$.
\medskip

\subsection{Koszulity and Poincaré-Birkhoff-Witt bases}
We prove here that when $\Pca$ is a forest poset, $\As(\Pca)$ is Koszul.
For that, we consider an orientation of the space of relations
$\FreeRel_\Pca^\Op$ of $\As(\Pca)$ and show that this orientation is a
convergent rewrite rule. This strategy to prove that $\As(\Pca)$ is
Koszul is deduced, as explained in
Section~\ref{subsubsec:koszul_duality_koszulity_criterion}, from the
works of Hoffbeck~\cite{Hof10} and Dotsenko and Khoroshkin~\cite{DK10}.
\medskip

\subsubsection{Orientation of the space of relations}
Let $\Pca$ be a poset (not necessarily a forest poset just now) and
$\Rew_\Pca'$ be the rewrite rule on $\Free(\FreeGen_\Pca^\Op)$
satisfying
\begin{subequations}
\begin{equation} \label{equ:rewrite_1}
    \Op_a \circ_1 \Op_b
    \Rew_\Pca'
    \Op_{a \Min_\Pca b} \circ_2 \Op_{a \Min_\Pca b},
    \qquad a, b \in \Pca \mbox{ and }
        (a \Ord_\Pca b \mbox{ or } b \Ord_\Pca a),
\end{equation}
\begin{equation} \label{equ:rewrite_2}
    \Op_a \circ_2 \Op_b
    \Rew_\Pca'
    \Op_{a \Min_\Pca b} \circ_2 \Op_{a \Min_\Pca b},
    \qquad a, b \in \Pca \mbox{ and }
        (a \OrdStrict_\Pca b \mbox{ or } b \OrdStrict_\Pca a).
\end{equation}
\end{subequations}
\medskip

\subsubsection{Convergent rewrite rule}

\begin{Lemma} \label{lem:rewrite_rule_space_relations}
    Let $\Pca$ be a poset. Then, the space induced by the rewrite rule
    $\Rew_\Pca'$ on $\Free(\FreeGen_\Pca^\Op)$ is the operad ideal of
    $\Free(\FreeGen_\Pca^\Op)$ generated by the space of relations
    $\FreeRel_\Pca^\Op$ of $\As(\Pca)$.
\end{Lemma}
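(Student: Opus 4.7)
The plan is to prove this set equality by noticing that both sides are operad ideals of $\Free(\FreeGen_\Pca^\Op)$, and then verifying mutual inclusion on generating families. Let $\Ideal$ denote the space induced by $\Rew_\Pca'$. The fact that $\langle \FreeRel_\Pca^\Op \rangle$ is an operad ideal is immediate from its definition. To see that $\Ideal$ is also an operad ideal, observe that rewriting a subtree inside a larger tree remains a valid single-step rewriting, so any partial composition of the form $\Tfr \circ_i (\Sfr - \Sfr')$ or $(\Sfr - \Sfr') \circ_j \Tfr$ with $\Sfr \Rew \Sfr'$ is still a difference of two $\Rew$-related trees. It is therefore enough to exhibit each generating family inside the other space.

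For the inclusion $\langle \FreeRel_\Pca^\Op \rangle \subseteq \Ideal$, I would handle the two generator types separately. The generators of type~\eqref{equ:relation_1} are literally the differences between the two sides of rewrite~\eqref{equ:rewrite_1} and thus lie trivially in $\Ideal$. For generators of type~\eqref{equ:relation_2}, set $c := a \Min_\Pca b$; then $\Op_c \circ_1 \Op_c$ rewrites by~\eqref{equ:rewrite_1} (applied with both arguments equal to $c$) to $\Op_c \circ_2 \Op_c$, while $\Op_a \circ_2 \Op_b$ rewrites to the same tree by~\eqref{equ:rewrite_2} when $a \ne b$ (and the two expressions coincide when $a = b$). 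Thus $\Op_c \circ_1 \Op_c$ and $\Op_a \circ_2 \Op_b$ are $\RewEquiv$-equivalent, so their difference lies in $\Ideal$.

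For the reverse inclusion $\Ideal \subseteq \langle \FreeRel_\Pca^\Op \rangle$, since $\Ideal$ is an operad ideal generated by the differences $\Ufr - \Ufr'$ coming from single-step rewrites, I need only check each elementary rewrite. Rewrites of type~\eqref{equ:rewrite_1} give exactly generators of type~\eqref{equ:relation_1} and so nothing is required. For rewrites of type~\eqref{equ:rewrite_2}, setting $c := a \Min_\Pca b$ with $a$ and $b$ strictly comparable, both $\Op_a \circ_2 \Op_b$ and $\Op_c \circ_2 \Op_c$ belong to the set $R_c$ of Lemma~\ref{lem:relations} (since $c$ is comparable with $a$ and with $b$), so their difference lies in $\FreeRel_\Pca^\Op$ by that lemma.

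The main obstacle is really just the middle step: identifying the common sink $\Op_c \circ_2 \Op_c$ to bridge the otherwise unrelated trees $\Op_c \circ_1 \Op_c$ and $\Op_a \circ_2 \Op_b$ occurring in~\eqref{equ:relation_2}. Once this observation is made, and once one has noted that $\Ideal$ is itself an operad ideal, the rest amounts to unpacking definitions and invoking Lemma~\ref{lem:relations}.
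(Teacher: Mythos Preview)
Your proposal is correct and follows essentially the same approach as the paper's proof: reduce to the elementary generators on each side, and for the only nontrivial case (relation~\eqref{equ:relation_2}) bridge $\Op_c \circ_1 \Op_c$ and $\Op_a \circ_2 \Op_b$ through the common sink $\Op_c \circ_2 \Op_c$. The only difference is that you make the operad-ideal structure of the induced space explicit before reducing to generators, whereas the paper leaves this reduction implicit.
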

\begin{proof}
    Let $\Sfr$ and $\Tfr$ be two syntax trees such that
    $\Sfr \RewEquiv_\Pca \Tfr$. To prove that
    $\Sfr - \Tfr \in \langle\FreeRel_\Pca^\Op\rangle$, it is enough to
    prove that $\Sfr \Rew_\Pca' \Tfr$ implies
    $\Sfr - \Tfr \in \FreeRel_\Pca^\Op$. By a direct inspection
    of~\eqref{equ:rewrite_1} and~\eqref{equ:rewrite_2}, we observe
    by Lemma~\ref{lem:relations} that $\Sfr - \Tfr$ is
    in~$\FreeRel_\Pca^\Op$.
    \smallskip

    Conversely, let $\Sfr$ and $\Tfr$ be two syntax trees such that
    $\Sfr - \Tfr \in \langle\FreeRel_\Pca^\Op\rangle$. It is enough to
    prove that $\Sfr - \Tfr \in \FreeRel_\Pca^\Op$ implies
    $\Sfr \RewEquiv_\Pca \Tfr$. Since $\FreeRel_\Pca^\Op$ is generated
    by~\eqref{equ:relation_1} and~\eqref{equ:relation_2}, we have two
    cases to consider. If $\Sfr = \Op_a \circ_1 \Op_b$ and
    $\Tfr = \Op_{a \Min_\Pca b} \circ_2 \Op_{a \Min_\Pca b}$  where $a$
    and $b$ are two comparable elements of $\Pca$,
    by~\eqref{equ:rewrite_1}, $\Sfr \Rew_\Pca' \Tfr$. Otherwise,
    $\Sfr = \Op_{a \Min_\Pca b} \circ_1 \Op_{a \Min_\Pca b}$ and
    $\Tfr = \Op_a \circ_2 \Op_b$  where $a$ and $b$ are two comparable
    elements of $\Pca$. By setting
    $\Rfr := \Op_{a \Min_\Pca b} \circ_2 \Op_{a \Min_\Pca b}$, we have
    by~\eqref{equ:rewrite_1}, $\Sfr \Rew_\Pca' \Rfr$ and,
    by~\eqref{equ:rewrite_2}, $\Tfr \Rew_\Pca' \Rfr$. Then,
    $\Sfr \RewEquiv_\Pca \Tfr$.
\end{proof}
\medskip

Lemma~\ref{lem:rewrite_rule_space_relations} shows that the rewrite rule
$\Rew_\Pca'$ is an orientation of the space of relations
$\FreeRel_\Pca^\Op$ of~$\As(\Pca)$.
\medskip

\begin{Lemma} \label{lem:terminating_rewrite_rule}
    Let $\Pca$ be a poset. Then, the rewrite rule $\Rew_\Pca'$ on
    $\Free(\FreeGen_\Pca^\Op)$ is terminating.
\end{Lemma}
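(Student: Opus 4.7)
The plan is to exhibit a monovariant with values in a well-founded set that strictly decreases under every one-step application of $\Rew_\Pca$, which rules out infinite chains. Because~\eqref{equ:rewrite_1} is a shape-changing ``rotation'' while~\eqref{equ:rewrite_2} is shape-preserving and only modifies labels, a single numerical invariant is unlikely to suffice; I would use a lexicographic pair $\phi(\Tfr) := (\phi_1(\Tfr), \phi_2(\Tfr)) \in \N^2$, where $\phi_1$ handles~\eqref{equ:rewrite_1} and $\phi_2$ takes over for~\eqref{equ:rewrite_2}.

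For the first coordinate, I would take $\phi_1(\Tfr)$ to be the sum, over internal nodes $v$ of $\Tfr$, of the number of internal nodes of $\Tfr$ lying in the bottom subtree rooted at the first child of $v$. Intuitively, $\phi_1$ measures how ``left-leaning'' $\Tfr$ is: it vanishes on right combs and is maximal on left combs. For the second, define $g : \Pca \to \N$ by $g(a) := \# \{c \in \Pca : c \OrdStrict_\Pca a\}$ and let $\phi_2(\Tfr) := \sum_v g(\mathrm{label}(v))$, summed over internal nodes $v$ of $\Tfr$. Strict inclusion of strict down-sets yields $g(a) < g(b)$ whenever $a \OrdStrict_\Pca b$, and consequently $g\left(a \Min_\Pca b\right) \leq \min\left(g(a), g(b)\right)$ for any comparable $a, b \in \Pca$.

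It then suffices to inspect the two rewriting cases locally. An application of~\eqref{equ:rewrite_1} at some internal node detaches the entire left bottom subtree of that node and regrafts it on the right side of a relabeled copy; a direct count of the affected contributions shows that $\phi_1$ strictly decreases (by a positive amount controlled by the sizes of the subtrees involved), while $\phi_2$ cannot increase because the two new labels $a \Min_\Pca b$ have $g$-value at most $\min\left(g(a), g(b)\right)$. An application of~\eqref{equ:rewrite_2} leaves the underlying planar tree intact, so $\phi_1$ is unchanged; but the label pair $(a, b)$ with $a$ and $b$ strictly comparable becomes $\left(a \Min_\Pca b, a \Min_\Pca b\right)$, which strictly decreases $\phi_2$ by $|g(a) - g(b)| > 0$. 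Hence $\phi$ strictly decreases lexicographically under every step of $\Rew_\Pca$, and since $\N^2$ equipped with the lexicographic order is well-founded, $\Rew_\Pca'$ is terminating. The only real design choice is finding the right pair of invariants; once chosen, the verification is a direct case analysis with no serious obstacle.
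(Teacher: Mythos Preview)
Your proposal is correct. Both you and the paper prove termination by exhibiting a ranking function that moves strictly monotonically under every rewrite step, but the technical choices differ. The paper's first coordinate is the \emph{right}-subtree internal-node count (which \emph{increases} under~\eqref{equ:rewrite_1}), while yours is the mirror left-subtree count (which decreases); the paper's second coordinate is the full infix word compared componentwise in the reversed $\Pca$-order, whereas you collapse the labels to the scalar $\sum_v g(\mathrm{label}(v))$. The paper combines the two via a \emph{product} order and concludes by invoking the finiteness of $\Free(\FreeGen_\Pca^\Op)(n)$, while you combine lexicographically and appeal directly to the well-foundedness of $\N^2$. Your route is slightly more self-contained in that it does not use the finiteness of the ambient tree set (though your definition of $g$ still uses finiteness of $\Pca$); the paper's route avoids the small computation that $\phi_1$ drops by $1 + n_1$ in context, since with the product order it suffices to check the degree-$2$ patterns and then note that only finitely many trees of each arity exist.
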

\begin{proof}
    Let $n \geq 1$ and consider the map
    \begin{equation}
        \phi : \Free(\FreeGen_\Pca^\Op)(n) \to \N \times \Pca^{n - 1}
    \end{equation}
    such that for any syntax tree $\Tfr$ of $\Free(\FreeGen_\Pca^\Op)(n)$,
    $\phi(\Tfr) := (\alpha, u)$ where $\alpha$ is the sum, for all
    internal nodes $x$ of $\Tfr$ of the number of internal nodes in the
    right subtree of $x$, and $u$ is the infix reading word of $\Tfr$. Let
    $\leq$ be the order relation on $\N \times \Pca^{n - 1}$ satisfying
    $(\alpha, u) \leq (\alpha', u')$ if $\alpha \leq \alpha'$ and
    $u'_i \Ord_\Pca u_i$ for all $i \in [n - 1]$. For all comparable
    elements $a$ and $b$ of $\Pca$, we have
    \begin{equation}
        \phi(\Op_a \circ_1 \Op_b)
        = (0, ab)
        < (1, (a \Min_\Pca b) \, (a \Min_\Pca b))
        = \phi(\Op_{a \Min_\Pca b} \circ_2 \Op_{a \Min_\Pca b}),
    \end{equation}
    and when in addition $a \ne b$,
    \begin{equation}
        \phi(\Op_a \circ_2 \Op_b)
        = (1, ab)
        < (1, (a \Min_\Pca b) \, (a \Min_\Pca b))
        = \phi(\Op_{a \Min_\Pca b} \circ_2 \Op_{a \Min_\Pca b}).
    \end{equation}
    Therefore, for all syntax trees $\Sfr$ and $\Tfr$ such that
    $\Sfr \Rew_\Pca' \Tfr$, $\phi(\Sfr) < \phi(\Tfr)$. This implies that
    for all syntax trees $\Sfr$ and $\Tfr$ such that $\Sfr \ne \Tfr$ and
    $\Sfr \RewTrans_\Pca \Tfr$, $\phi(\Sfr) < \phi(\Tfr)$. Now, since
    $\Rew_\Pca'$ preserves the arities of syntax trees and since the set
    $\Free(\FreeGen_\Pca^\Op)(n)$ is finite, $\Rew_\Pca'$ is terminating.
\end{proof}
\medskip

\begin{Lemma} \label{lem:normal_forms}
    Let $\Pca$ be a poset. Then, the set of normal forms of the rewrite
    rule $\Rew_\Pca'$ on $\Free(\FreeGen_\Pca^\Op)$ is the set of the
    syntax trees $\Tfr$ on $\Free(\FreeGen_\Pca^\Op)$ such that for any
    internal node of $\Tfr$ labeled by $\Op_a$ having a left (resp. right)
    child labeled by $\Op_b$, $a$ and $b$ are incomparable (resp. are
    equal or are incomparable) in $\Pca$.
\end{Lemma}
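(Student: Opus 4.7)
The plan is to characterize normal forms directly from the definition of the rewrite rule $\Rew_\Pca'$. Since both rules~\eqref{equ:rewrite_1} and~\eqref{equ:rewrite_2} have degree~$2$, every left-hand side pattern is a two-node syntax tree: an internal node labeled $\Op_a$ together with one child (left for~\eqref{equ:rewrite_1}, right for~\eqref{equ:rewrite_2}) labeled $\Op_b$. An occurrence of such a pattern in a tree $\Tfr \in \Free(\FreeGen_\Pca^\Op)$ is therefore nothing but an internal node of $\Tfr$ labeled $\Op_a$ having a child labeled $\Op_b$ of the required kind. The argument is then a bookkeeping exercise, unfolding the two oriented relations.

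More precisely, I would rephrase the statement contrapositively: $\Tfr$ is \emph{not} a normal form of $\Rew_\Pca'$ if and only if either some internal node of $\Tfr$ labeled $\Op_a$ admits a left child labeled $\Op_b$ with $a$ and $b$ comparable in $\Pca$, or some internal node labeled $\Op_a$ admits a right child labeled $\Op_b$ with $a$ and $b$ distinct and comparable. For the first direction, starting from a tree that violates one of the two conditions in the statement, I exhibit at the offending node the applicable instance of~\eqref{equ:rewrite_1} or~\eqref{equ:rewrite_2}, proving that $\Tfr$ rewrites. For the converse, any rewriting step of $\Tfr$ uses, by definition, an occurrence of a left-hand side of $\Rew_\Pca'$, which is precisely a configuration forbidden by the two conditions.

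The only delicate point is the asymmetry between~\eqref{equ:rewrite_1} and~\eqref{equ:rewrite_2} in the diagonal case $a = b$: rule~\eqref{equ:rewrite_1} is allowed for every comparable pair, including $a = b$, in which case it nontrivially rewrites $\Op_a \circ_1 \Op_a$ into $\Op_a \circ_2 \Op_a$; rule~\eqref{equ:rewrite_2}, by contrast, is restricted to the cases $a \OrdStrict_\Pca b$ or $b \OrdStrict_\Pca a$. This is exactly the reason why the condition on left children must forbid comparability outright, while the condition on right children may accommodate the equality $a = b$. Beyond keeping track of this asymmetry, no real obstacle arises, and the proof reduces to a direct reading of the two rewrite rules.
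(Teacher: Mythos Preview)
Your proposal is correct and follows essentially the same approach as the paper: characterize normal forms as the trees avoiding every left-hand side pattern of~\eqref{equ:rewrite_1} and~\eqref{equ:rewrite_2}, and read off the resulting parent--child conditions, paying attention to the asymmetry in the diagonal case $a=b$. The only small omission is that the paper first invokes Lemma~\ref{lem:terminating_rewrite_rule} (termination of $\Rew_\Pca'$) because, in its conventions, the notion of normal form is introduced only for terminating rewrite rules; you may want to add that one-line reference, but the substance of your argument is identical.
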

\begin{proof}
    By Lemma~\ref{lem:terminating_rewrite_rule}, $\Rew_\Pca'$ is
    terminating. Therefore, $\Rew_\Pca'$ admits normal forms, which are
    by definition the syntax trees of $\Free(\FreeGen_\Pca^\Op)$ that
    cannot be rewritten by $\Rew_\Pca'$. These syntax trees are exactly
    the ones avoiding the patterns appearing as left members
    in~\eqref{equ:rewrite_1} and~\eqref{equ:rewrite_2}, and are those
    described in the statement of the lemma.
\end{proof}
\medskip

Let us denote by $\NormalF(\Pca)$ the set of normal forms of $\Rew_\Pca'$,
described in the statement of Lemma~\ref{lem:normal_forms}. Moreover,
we denote by $\NormalF(\Pca)(n)$, $n \geq 1$, the set $\NormalF(\Pca)$
restricted to syntax trees with exactly~$n$ leaves. From their
description provided by Lemma~\ref{lem:normal_forms}, any tree $\Tfr$ of
$\NormalF(\Pca)$ different from the leaf is of the recursive unique
general form
\begin{equation} \label{equ:normal_forms}
    \Tfr =
    \begin{tikzpicture}
        [baseline=(current bounding box.center),xscale=.5,yscale=.45]
        \node(0)at(0.00,-1.67){\begin{math}\Sfr_1\end{math}};
        \node(2)at(2.00,-3.33){\begin{math}\Sfr_{\ell - 1}\end{math}};
        \node(4)at(4.00,-3.33){\begin{math}\Sfr_\ell\end{math}};
        \node(1)at(1.00,0.00){\begin{math}\Op_a\end{math}};
        \node(3)at(3.00,-1.67){\begin{math}\Op_a\end{math}};
        \draw(0)--(1);
        \draw(2)--(3);
        \draw[densely dashed](3)--(1);
        \draw(4)--(3);
        \node(r)at(1.00,1){};
        \draw(r)--(1);
    \end{tikzpicture}\,,
\end{equation}
where $a \in \Pca$ and the dashed edge denotes a right comb tree wherein
internal nodes are labeled by $\Op_a$ and for any $i \in [\ell]$,
$\Sfr_i$ is a tree of $\NormalF(\Pca)$ such that $\Sfr_i$ is the leaf or
its root is labeled by a~$\Op_b$, $b \in\Pca$, so that $a$ and $b$ are
incomparable in~$\Pca$.
\medskip

\begin{Lemma} \label{lem:confluent_rewrite_rule}
    Let $\Pca$ be a forest poset. Then, the rewrite rule $\Rew_\Pca'$ on
    $\Free(\FreeGen_\Pca^\Op)$ is confluent.
\end{Lemma}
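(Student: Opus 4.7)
The plan is to apply Lemma~\ref{lem:confluent_few_nodes}. Since $\Rew_\Pca'$ is terminating by Lemma~\ref{lem:terminating_rewrite_rule} and its two rewrite patterns~\eqref{equ:rewrite_1} and~\eqref{equ:rewrite_2} both have degree $2$, it suffices to verify that every critical pair arising from a critical tree with at most three internal nodes is joinable. I would first enumerate the possible shapes of such critical trees: since two overlapping left-hand sides (each of the form $\Op_x \circ_i \Op_y$) must share exactly one internal node, each overlap produces a binary tree of arity four with three internal nodes, the shared one being simultaneously a child in the upper pattern and the root in the lower pattern. Distinguishing whether the shared node is the left or the right child of the upper pattern, and whether the lower pattern grafts at position $1$ or $2$, yields a short finite list of shapes, namely a left comb, two mixed shapes, a symmetric shape with both children of the root internal, and a right comb.

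For each shape, I would write down the three labels $a$, $b$, $c$ of the internal nodes together with the comparability requirements on them imposed by the simultaneous applicability of the two rewrites. The forest hypothesis guarantees, via Lemma~\ref{lem:forest_poset_max_three_elements}, that the three labels lie in a chain of $\Pca$, so the triple minimum $a \Min_\Pca b \Min_\Pca c$ is well defined. The two reducts produced by the two applicable rewrites can then each be rewritten further, using at most a few more steps of~\eqref{equ:rewrite_1} and~\eqref{equ:rewrite_2}, to a common right-comb tree whose three internal nodes are all labeled by $\Op_{a \Min_\Pca b \Min_\Pca c}$. This common reduct witnesses joinability.

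The main difficulty lies in organizing the case analysis cleanly. The two rewrite rules carry different comparability requirements (arbitrary comparability for~\eqref{equ:rewrite_1} versus strict comparability for~\eqref{equ:rewrite_2}), so the degenerate situations such as $a = b$, $b = c$, or $a \Min_\Pca b = c$ must be separated from the generic strict case, since each one affects which further rewrites are applicable along the way. The crucial place where the forest hypothesis is used is the following: without it, one could have $a \Ord_\Pca b$ and $c \Ord_\Pca b$ with $a$ and $c$ incomparable, preventing one of the two reducts from being collapsed onto the common right comb and thereby breaking joinability; this is precisely what Lemma~\ref{lem:forest_poset_max_three_elements} rules out. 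Once each of the finitely many configurations has been checked, Lemma~\ref{lem:confluent_few_nodes} yields that $\Rew_\Pca'$ is confluent.
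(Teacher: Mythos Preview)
Your proposal is correct and follows essentially the same approach as the paper: enumerate the five critical tree shapes, invoke Lemma~\ref{lem:forest_poset_max_three_elements} so that $a \Min_\Pca b \Min_\Pca c$ exists, rewrite each branch of every critical pair to the right comb with all three nodes labeled by that common minimum, and conclude via Lemma~\ref{lem:confluent_few_nodes}. One small expository slip: your description of overlaps (``the shared one being simultaneously a child in the upper pattern and the root in the lower pattern'') only accounts for four shapes and does not cover the symmetric tree where both patterns share the root; you do list that fifth shape anyway, and the paper handles the degenerate equalities you flag by working with the reflexive closure of $\Rew_\Pca$ in its diagrams.
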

\begin{proof}
    During this proof, to gain clarity in our drawings of trees, we
    will represent labels of internal nodes of syntax trees of
    $\Free(\FreeGen_\Pca^\Op)$ only by their subscripts. For instance,
    an internal node labeled by $a$, $a \in \Pca$, denotes an internal
    node labeled by $\Op_a$. Moreover, a word $a_1 \dots a_k$ of
    elements of $\Pca$ shall represent the element
    $a_1 \Min_\Pca \dots \Min_\Pca a_k$. For instance, an internal node
    labeled by $a b$ denotes an internal node labeled by
    $\Op_{a \Min_\Pca b}$. Under these conventions, the syntax trees
    \begin{equation} \label{equ:confluent_rewrite_rule_critical_trees}
        \begin{tikzpicture}
            [baseline=(current bounding box.center),xscale=.26,yscale=.22]
            \node(0)at(0.00,-5.25){};
            \node(2)at(2.00,-5.25){};
            \node(4)at(4.00,-3.50){};
            \node(6)at(6.00,-1.75){};
            \node(1)at(1.00,-3.50){\begin{math}a\end{math}};
            \node(3)at(3.00,-1.75){\begin{math}b\end{math}};
            \node(5)at(5.00,0.00){\begin{math}c\end{math}};
            \draw(0)--(1);
            \draw(1)--(3);
            \draw(2)--(1);
            \draw(3)--(5);
            \draw(4)--(3);
            \draw(6)--(5);
            \node(r)at(5.00,1.75){};
            \draw(r)--(5);
        \end{tikzpicture}\,,\quad
        \begin{tikzpicture}
            [baseline=(current bounding box.center),xscale=.3,yscale=.25]
            \node(0)at(0.00,-3.50){};
            \node(2)at(2.00,-5.25){};
            \node(4)at(4.00,-5.25){};
            \node(6)at(6.00,-1.75){};
            \node(1)at(1.00,-1.75){\begin{math}a\end{math}};
            \node(3)at(3.00,-3.50){\begin{math}b\end{math}};
            \node(5)at(5.00,0.00){\begin{math}c\end{math}};
            \draw(0)--(1);
            \draw(1)--(5);
            \draw(2)--(3);
            \draw(3)--(1);
            \draw(4)--(3);
            \draw(6)--(5);
            \node(r)at(5.00,1.5){};
            \draw(r)--(5);
        \end{tikzpicture}\,,\quad
        \begin{tikzpicture}
            [baseline=(current bounding box.center),xscale=.3,yscale=.22]
            \node(0)at(0.00,-4.67){};
            \node(2)at(2.00,-4.67){};
            \node(4)at(4.00,-4.67){};
            \node(6)at(6.00,-4.67){};
            \node(1)at(1.00,-2.33){\begin{math}a\end{math}};
            \node(3)at(3.00,0.00){\begin{math}b\end{math}};
            \node(5)at(5.00,-2.33){\begin{math}c\end{math}};
            \draw(0)--(1);
            \draw(1)--(3);
            \draw(2)--(1);
            \draw(4)--(5);
            \draw(5)--(3);
            \draw(6)--(5);
            \node(r)at(3.00,1.75){};
            \draw(r)--(3);
        \end{tikzpicture}\,,\quad
        \begin{tikzpicture}
            [baseline=(current bounding box.center),xscale=.3,yscale=.25]
            \node(0)at(0.00,-1.75){};
            \node(2)at(2.00,-5.25){};
            \node(4)at(4.00,-5.25){};
            \node(6)at(6.00,-3.50){};
            \node(1)at(1.00,0.00){\begin{math}a\end{math}};
            \node(3)at(3.00,-3.50){\begin{math}b\end{math}};
            \node(5)at(5.00,-1.75){\begin{math}c\end{math}};
            \draw(0)--(1);
            \draw(2)--(3);
            \draw(3)--(5);
            \draw(4)--(3);
            \draw(5)--(1);
            \draw(6)--(5);
            \node(r)at(1.00,1.5){};
            \draw(r)--(1);
        \end{tikzpicture}\,,\quad
        \begin{tikzpicture}
            [baseline=(current bounding box.center),xscale=.26,yscale=.22]
            \node(0)at(0.00,-1.75){};
            \node(2)at(2.00,-3.50){};
            \node(4)at(4.00,-5.25){};
            \node(6)at(6.00,-5.25){};
            \node(1)at(1.00,0.00){\begin{math}a\end{math}};
            \node(3)at(3.00,-1.75){\begin{math}b\end{math}};
            \node(5)at(5.00,-3.50){\begin{math}c\end{math}};
            \draw(0)--(1);
            \draw(2)--(3);
            \draw(3)--(1);
            \draw(4)--(5);
            \draw(5)--(3);
            \draw(6)--(5);
            \node(r)at(1.00,1.75){};
            \draw(r)--(1);
        \end{tikzpicture}\,,
    \end{equation}
    are critical trees of $\Rew_\Pca'$ where in the first, third, and
    last trees, $a$ and $b$ are comparable and $b$ and $c$ are
    comparable, in the second tree, $a$ and $b$ are comparable and $a$
    and $c$ are comparable, and in the fourth tree, $a$ and $c$ are
    comparable and $b$ and $c$ are comparable. Moreover, all critical
    trees of $\Rew_\Pca'$ consisting in three or less internal nodes are
    one of the five trees
    of~\eqref{equ:confluent_rewrite_rule_critical_trees}.
    \smallskip

    Because given a tree $\Tfr$
    of~\eqref{equ:confluent_rewrite_rule_critical_trees}, there are
    exactly two ways to rewrite $\Tfr$ in one step by~$\Rew_\Pca'$, each
    critical tree of $\Rew_\Pca'$ gives rise to one critical pair. By
    denoting by~$\raisebox{.2em}{\tikz{\draw[EdgeRew](0,0)--(.4,0);}}$
    the reflexive closure of the relation $\Rew_\Pca$, the fact that
    $\Pca$ is a forest poset and
    Lemma~\ref{lem:forest_poset_max_three_elements} imply that we have
    the five graphs of rewritings by $\Rew_\Pca'$

    \begin{subequations}
    \begin{minipage}[c]{.45\linewidth}
    \begin{equation}
    \scalebox{.5}{\begin{tikzpicture}
    [baseline=(current bounding box.center),xscale=.8,yscale=1.1]
        \node(A)at(0,0){
            \begin{tikzpicture}
            [baseline=(current bounding box.center),xscale=.3,yscale=.26]
            \begin{scope}
                \node(0)at(0.00,-5.25){};
                \node(2)at(2.00,-5.25){};
                \node(4)at(4.00,-3.50){};
                \node(6)at(6.00,-1.75){};
                \node(1)at(1.00,-3.50){\begin{math}a\end{math}};
                \node(3)at(3.00,-1.75){\begin{math}b\end{math}};
                \node(5)at(5.00,0.00){\begin{math}c\end{math}};
                \draw(0)--(1);
                \draw(1)--(3);
                \draw(2)--(1);
                \draw(3)--(5);
                \draw(4)--(3);
                \draw(6)--(5);
                \node(r)at(5.00,1.5){};
                \draw(r)--(5);
            \end{scope}\end{tikzpicture}};
        \node(B)at(-3,-2.5){
            \begin{tikzpicture}
            [baseline=(current bounding box.center),xscale=.3,yscale=.24]
            \begin{scope}
                \node(0)at(0.00,-4.67){};
                \node(2)at(2.00,-4.67){};
                \node(4)at(4.00,-4.67){};
                \node(6)at(6.00,-4.67){};
                \node(1)at(1.00,-2.33){\begin{math}a\end{math}};
                \node(3)at(3.00,0.00){\begin{math}b c\end{math}};
                \node(5)at(5.00,-2.33){\begin{math}b c\end{math}};
                \draw(0)--(1);
                \draw(1)--(3);
                \draw(2)--(1);
                \draw(4)--(5);
                \draw(5)--(3);
                \draw(6)--(5);
                \node(r)at(3.00,1.75){};
                \draw(r)--(3);
            \end{scope}\end{tikzpicture}};
        \node(C)at(3,-2.5){
            \begin{tikzpicture}
            [baseline=(current bounding box.center),xscale=.3,yscale=.31]
            \begin{scope}
                \node(0)at(0.00,-3.50){};
                \node(2)at(2.00,-5.25){};
                \node(4)at(4.00,-5.25){};
                \node(6)at(6.00,-1.75){};
                \node(1)at(1.00,-1.75){\begin{math}a b\end{math}};
                \node(3)at(3.00,-3.50){\begin{math}a b\end{math}};
                \node(5)at(5.00,0.00){\begin{math}c\end{math}};
                \draw(0)--(1);
                \draw(1)--(5);
                \draw(2)--(3);
                \draw(3)--(1);
                \draw(4)--(3);
                \draw(6)--(5);
                \node(r)at(5.00,1.31){};
                \draw(r)--(5);
            \end{scope}\end{tikzpicture}};
        \node(D)at(-3,-5.5){
            \begin{tikzpicture}
            [baseline=(current bounding box.center),xscale=.3,yscale=.34]
            \begin{scope}
                \node(0)at(0.00,-1.75){};
                \node(2)at(2.00,-3.50){};
                \node(4)at(4.00,-5.25){};
                \node(6)at(6.00,-5.25){};
                \node(1)at(1.00,0.00){\begin{math}a b c\end{math}};
                \node(3)at(3.00,-1.75){\begin{math}a b c\end{math}};
                \node(5)at(5.00,-3.50){\begin{math}b c\end{math}};
                \draw(0)--(1);
                \draw(2)--(3);
                \draw(3)--(1);
                \draw(4)--(5);
                \draw(5)--(3);
                \draw(6)--(5);
                \node(r)at(1.00,1.31){};
                \draw(r)--(1);
        \end{scope}\end{tikzpicture}};
        \node(E)at(3,-5.5){
            \begin{tikzpicture}
            [baseline=(current bounding box.center),xscale=.3,yscale=.33]
            \begin{scope}
                \node(0)at(0.00,-1.75){};
                \node(2)at(2.00,-5.25){};
                \node(4)at(4.00,-5.25){};
                \node(6)at(6.00,-3.50){};
                \node(1)at(1.00,0.00){\begin{math}a b c\end{math}};
                \node(3)at(3.00,-3.50){\begin{math}a b\end{math}};
                \node(5)at(5.00,-1.75){\begin{math}a b c\end{math}};
                \draw(0)--(1);
                \draw(2)--(3);
                \draw(3)--(5);
                \draw(4)--(3);
                \draw(5)--(1);
                \draw(6)--(5);
                \node(r)at(1.00,1.31){};
                \draw(r)--(1);
            \end{scope}\end{tikzpicture}};
        \node(F)at(0,-8.5){
            \begin{tikzpicture}
            [baseline=(current bounding box.center),xscale=.3,yscale=.32]
            \begin{scope}
                \node(0)at(0.00,-1.75){};
                \node(2)at(2.00,-3.50){};
                \node(4)at(4.00,-5.25){};
                \node(6)at(6.00,-5.25){};
                \node(1)at(1.00,0.00){\begin{math}a b c\end{math}};
                \node(3)at(3.00,-1.75){\begin{math}a b c\end{math}};
                \node(5)at(5.00,-3.50){\begin{math}a b c\end{math}};
                \draw(0)--(1);
                \draw(2)--(3);
                \draw(3)--(1);
                \draw(4)--(5);
                \draw(5)--(3);
                \draw(6)--(5);
                \node(r)at(1.00,1.31){};
                \draw(r)--(1);
            \end{scope}\end{tikzpicture}};
        \draw[EdgeRew](A)--(B);
        \draw[EdgeRew](A)--(C);
        \draw[EdgeRew](B)--(D);
        \draw[EdgeRew](C)--(E);
        \draw[EdgeRew](D)--(F);
        \draw[EdgeRew](E)--(F);
    \end{tikzpicture}}\,,
    \end{equation}
    \end{minipage}
    \qquad
    \begin{minipage}[c]{.45\linewidth}
    \begin{equation}
    \scalebox{.5}{\begin{tikzpicture}
    [baseline=(current bounding box.center),xscale=.8,yscale=1.1]
        \node(A)at(0,0){
            \begin{tikzpicture}
            [baseline=(current bounding box.center),xscale=.3,yscale=.31]
            \begin{scope}
                \node(0)at(0.00,-3.50){};
                \node(2)at(2.00,-5.25){};
                \node(4)at(4.00,-5.25){};
                \node(6)at(6.00,-1.75){};
                \node(1)at(1.00,-1.75){\begin{math}a\end{math}};
                \node(3)at(3.00,-3.50){\begin{math}b\end{math}};
                \node(5)at(5.00,0.00){\begin{math}c\end{math}};
                \draw(0)--(1);
                \draw(1)--(5);
                \draw(2)--(3);
                \draw(3)--(1);
                \draw(4)--(3);
                \draw(6)--(5);
                \node(r)at(5.00,1.31){};
                \draw(r)--(5);
            \end{scope}\end{tikzpicture}};
        \node(B)at(-3,-2.5){
            \begin{tikzpicture}
            [baseline=(current bounding box.center),xscale=.3,yscale=.33]
            \begin{scope}
                \node(0)at(0.00,-1.75){};
                \node(2)at(2.00,-5.25){};
                \node(4)at(4.00,-5.25){};
                \node(6)at(6.00,-3.50){};
                \node(1)at(1.00,0.00){\begin{math}a c\end{math}};
                \node(3)at(3.00,-3.50){\begin{math}b\end{math}};
                \node(5)at(5.00,-1.75){\begin{math}a c\end{math}};
                \draw(0)--(1);
                \draw(2)--(3);
                \draw(3)--(5);
                \draw(4)--(3);
                \draw(5)--(1);
                \draw(6)--(5);
                \node(r)at(1.00,1.31){};
                \draw(r)--(1);
            \end{scope}\end{tikzpicture}};
        \node(C)at(3,-2.5){
            \begin{tikzpicture}
            [baseline=(current bounding box.center),xscale=.3,yscale=.31]
            \begin{scope}
                \node(0)at(0.00,-3.50){};
                \node(2)at(2.00,-5.25){};
                \node(4)at(4.00,-5.25){};
                \node(6)at(6.00,-1.75){};
                \node(1)at(1.00,-1.75){\begin{math}a b\end{math}};
                \node(3)at(3.00,-3.50){\begin{math}a b\end{math}};
                \node(5)at(5.00,0.00){\begin{math}c\end{math}};
                \draw(0)--(1);
                \draw(1)--(5);
                \draw(2)--(3);
                \draw(3)--(1);
                \draw(4)--(3);
                \draw(6)--(5);
                \node(r)at(5.00,1.31){};
                \draw(r)--(5);
            \end{scope}\end{tikzpicture}};
        \node(D)at(-3,-5.75){
            \begin{tikzpicture}
            [baseline=(current bounding box.center),xscale=.3,yscale=.32]
            \begin{scope}
                \node(0)at(0.00,-1.75){};
                \node(2)at(2.00,-3.50){};
                \node(4)at(4.00,-5.25){};
                \node(6)at(6.00,-5.25){};
                \node(1)at(1.00,0.00){\begin{math}a c\end{math}};
                \node(3)at(3.00,-1.75){\begin{math}a b c\end{math}};
                \node(5)at(5.00,-3.50){\begin{math}a b c\end{math}};
                \draw(0)--(1);
                \draw(2)--(3);
                \draw(3)--(1);
                \draw(4)--(5);
                \draw(5)--(3);
                \draw(6)--(5);
                \node(r)at(1.00,1.31){};
                \draw(r)--(1);
        \end{scope}\end{tikzpicture}};
        \node(E)at(3,-5.75){
            \begin{tikzpicture}
            [baseline=(current bounding box.center),xscale=.3,yscale=.33]
            \begin{scope}
                \node(0)at(0.00,-1.75){};
                \node(2)at(2.00,-5.25){};
                \node(4)at(4.00,-5.25){};
                \node(6)at(6.00,-3.50){};
                \node(1)at(1.00,0.00){\begin{math}a b c\end{math}};
                \node(3)at(3.00,-3.50){\begin{math}a b\end{math}};
                \node(5)at(5.00,-1.75){\begin{math}a b c\end{math}};
                \draw(0)--(1);
                \draw(2)--(3);
                \draw(3)--(5);
                \draw(4)--(3);
                \draw(5)--(1);
                \draw(6)--(5);
                \node(r)at(1.00,1.31){};
                \draw(r)--(1);
            \end{scope}\end{tikzpicture}};
        \node(F)at(0,-8.5){
            \begin{tikzpicture}
            [baseline=(current bounding box.center),xscale=.3,yscale=.32]
            \begin{scope}
                \node(0)at(0.00,-1.75){};
                \node(2)at(2.00,-3.50){};
                \node(4)at(4.00,-5.25){};
                \node(6)at(6.00,-5.25){};
                \node(1)at(1.00,0.00){\begin{math}a b c\end{math}};
                \node(3)at(3.00,-1.75){\begin{math}a b c\end{math}};
                \node(5)at(5.00,-3.50){\begin{math}a b c\end{math}};
                \draw(0)--(1);
                \draw(2)--(3);
                \draw(3)--(1);
                \draw(4)--(5);
                \draw(5)--(3);
                \draw(6)--(5);
                \node(r)at(1.00,1.31){};
                \draw(r)--(1);
            \end{scope}\end{tikzpicture}};
        \draw[EdgeRew](A)--(B);
        \draw[EdgeRew](A)--(C);
        \draw[EdgeRew](B)--(D);
        \draw[EdgeRew](C)--(E);
        \draw[EdgeRew](D)--(F);
        \draw[EdgeRew](E)--(F);
    \end{tikzpicture}}\,,
    \end{equation}
    \end{minipage}

    \begin{minipage}[c]{.29\linewidth}
    \begin{equation}
    \scalebox{.5}{\begin{tikzpicture}
        [baseline=(current bounding box.center),xscale=.8,yscale=1.1]
        \node(A)at(0,0){
            \begin{tikzpicture}
            [baseline=(current bounding box.center),xscale=.3,yscale=.24]
            \begin{scope}
                \node(0)at(0.00,-4.67){};
                \node(2)at(2.00,-4.67){};
                \node(4)at(4.00,-4.67){};
                \node(6)at(6.00,-4.67){};
                \node(1)at(1.00,-2.33){\begin{math}a\end{math}};
                \node(3)at(3.00,0.00){\begin{math}b\end{math}};
                \node(5)at(5.00,-2.33){\begin{math}c\end{math}};
                \draw(0)--(1);
                \draw(1)--(3);
                \draw(2)--(1);
                \draw(4)--(5);
                \draw(5)--(3);
                \draw(6)--(5);
                \node(r)at(3.00,1.75){};
                \draw(r)--(3);
            \end{scope}\end{tikzpicture}};
        \node(B)at(-3,-2.5){
            \begin{tikzpicture}
            [baseline=(current bounding box.center),xscale=.3,yscale=.32]
            \begin{scope}
                \node(0)at(0.00,-1.75){};
                \node(2)at(2.00,-3.50){};
                \node(4)at(4.00,-5.25){};
                \node(6)at(6.00,-5.25){};
                \node(1)at(1.00,0.00){\begin{math}a b\end{math}};
                \node(3)at(3.00,-1.75){\begin{math}a b\end{math}};
                \node(5)at(5.00,-3.50){\begin{math}c\end{math}};
                \draw(0)--(1);
                \draw(2)--(3);
                \draw(3)--(1);
                \draw(4)--(5);
                \draw(5)--(3);
                \draw(6)--(5);
                \node(r)at(1.00,1.31){};
                \draw(r)--(1);
            \end{scope}\end{tikzpicture}};
        \node(C)at(3,-2.5){
            \begin{tikzpicture}
            [baseline=(current bounding box.center),xscale=.3,yscale=.24]
            \begin{scope}
                \node(0)at(0.00,-4.67){};
                \node(2)at(2.00,-4.67){};
                \node(4)at(4.00,-4.67){};
                \node(6)at(6.00,-4.67){};
                \node(1)at(1.00,-2.33){\begin{math}a\end{math}};
                \node(3)at(3.00,0.00){\begin{math}b c\end{math}};
                \node(5)at(5.00,-2.33){\begin{math}b c\end{math}};
                \draw(0)--(1);
                \draw(1)--(3);
                \draw(2)--(1);
                \draw(4)--(5);
                \draw(5)--(3);
                \draw(6)--(5);
                \node(r)at(3.00,1.75){};
                \draw(r)--(3);
            \end{scope}\end{tikzpicture}};
        \node(D)at(-3,-5.5){
            \begin{tikzpicture}
            [baseline=(current bounding box.center),xscale=.3,yscale=.34]
            \begin{scope}
                \node(0)at(0.00,-1.75){};
                \node(2)at(2.00,-3.50){};
                \node(4)at(4.00,-5.25){};
                \node(6)at(6.00,-5.25){};
                \node(1)at(1.00,0.00){\begin{math}a b\end{math}};
                \node(3)at(3.00,-1.75){\begin{math}a b c\end{math}};
                \node(5)at(5.00,-3.50){\begin{math}a b c\end{math}};
                \draw(0)--(1);
                \draw(2)--(3);
                \draw(3)--(1);
                \draw(4)--(5);
                \draw(5)--(3);
                \draw(6)--(5);
                \node(r)at(1.00,1.31){};
                \draw(r)--(1);
        \end{scope}\end{tikzpicture}};
        \node(E)at(3,-5.5){
            \begin{tikzpicture}
            [baseline=(current bounding box.center),xscale=.3,yscale=.32]
            \begin{scope}
                \node(0)at(0.00,-1.75){};
                \node(2)at(2.00,-3.50){};
                \node(4)at(4.00,-5.25){};
                \node(6)at(6.00,-5.25){};
                \node(1)at(1.00,0.00){\begin{math}a b c\end{math}};
                \node(3)at(3.00,-1.75){\begin{math}a b c\end{math}};
                \node(5)at(5.00,-3.50){\begin{math}b c\end{math}};
                \draw(0)--(1);
                \draw(2)--(3);
                \draw(3)--(1);
                \draw(4)--(5);
                \draw(5)--(3);
                \draw(6)--(5);
                \node(r)at(1.00,1.31){};
                \draw(r)--(1);
            \end{scope}\end{tikzpicture}};
        \node(F)at(0,-8.5){
            \begin{tikzpicture}
            [baseline=(current bounding box.center),xscale=.3,yscale=.32]
            \begin{scope}
                \node(0)at(0.00,-1.75){};
                \node(2)at(2.00,-3.50){};
                \node(4)at(4.00,-5.25){};
                \node(6)at(6.00,-5.25){};
                \node(1)at(1.00,0.00){\begin{math}a b c\end{math}};
                \node(3)at(3.00,-1.75){\begin{math}a b c\end{math}};
                \node(5)at(5.00,-3.50){\begin{math}a b c\end{math}};
                \draw(0)--(1);
                \draw(2)--(3);
                \draw(3)--(1);
                \draw(4)--(5);
                \draw(5)--(3);
                \draw(6)--(5);
                \node(r)at(1.00,1.31){};
                \draw(r)--(1);
            \end{scope}\end{tikzpicture}};
        \draw[EdgeRew](A)--(B);
        \draw[EdgeRew](A)--(C);
        \draw[EdgeRew](B)--(D);
        \draw[EdgeRew](C)--(E);
        \draw[EdgeRew](D)--(F);
        \draw[EdgeRew](E)--(F);
    \end{tikzpicture}}\,,
    \end{equation}
    \end{minipage}
    \quad
    \begin{minipage}[c]{.29\linewidth}
    \begin{equation}
    \scalebox{.5}{\begin{tikzpicture}
        [baseline=(current bounding box.center),xscale=.8,yscale=1.1]
        \node(A)at(0,0){
            \begin{tikzpicture}
            [baseline=(current bounding box.center),xscale=.3,yscale=.33]
            \begin{scope}
                \node(0)at(0.00,-1.75){};
                \node(2)at(2.00,-5.25){};
                \node(4)at(4.00,-5.25){};
                \node(6)at(6.00,-3.50){};
                \node(1)at(1.00,0.00){\begin{math}a\end{math}};
                \node(3)at(3.00,-3.50){\begin{math}b\end{math}};
                \node(5)at(5.00,-1.75){\begin{math}c\end{math}};
                \draw(0)--(1);
                \draw(2)--(3);
                \draw(3)--(5);
                \draw(4)--(3);
                \draw(5)--(1);
                \draw(6)--(5);
                \node(r)at(1.00,1.31){};
                \draw(r)--(1);
            \end{scope}\end{tikzpicture}};
        \node(B)at(-3,-2.5){
            \begin{tikzpicture}
            [baseline=(current bounding box.center),xscale=.3,yscale=.32]
            \begin{scope}
                \node(0)at(0.00,-1.75){};
                \node(2)at(2.00,-3.50){};
                \node(4)at(4.00,-5.25){};
                \node(6)at(6.00,-5.25){};
                \node(1)at(1.00,0.00){\begin{math}a\end{math}};
                \node(3)at(3.00,-1.75){\begin{math}b c\end{math}};
                \node(5)at(5.00,-3.50){\begin{math}b c\end{math}};
                \draw(0)--(1);
                \draw(2)--(3);
                \draw(3)--(1);
                \draw(4)--(5);
                \draw(5)--(3);
                \draw(6)--(5);
                \node(r)at(1.00,1.31){};
                \draw(r)--(1);
            \end{scope}\end{tikzpicture}};
        \node(C)at(3,-2.5){
            \begin{tikzpicture}
            [baseline=(current bounding box.center),xscale=.3,yscale=.33]
            \begin{scope}
                \node(0)at(0.00,-1.75){};
                \node(2)at(2.00,-5.25){};
                \node(4)at(4.00,-5.25){};
                \node(6)at(6.00,-3.50){};
                \node(1)at(1.00,0.00){\begin{math}a c\end{math}};
                \node(3)at(3.00,-3.50){\begin{math}b\end{math}};
                \node(5)at(5.00,-1.75){\begin{math}a c\end{math}};
                \draw(0)--(1);
                \draw(2)--(3);
                \draw(3)--(5);
                \draw(4)--(3);
                \draw(5)--(1);
                \draw(6)--(5);
                \node(r)at(1.00,1.31){};
                \draw(r)--(1);
            \end{scope}\end{tikzpicture}};
        \node(D)at(-3,-5.75){
            \begin{tikzpicture}
            [baseline=(current bounding box.center),xscale=.3,yscale=.32]
            \begin{scope}
                \node(0)at(0.00,-1.75){};
                \node(2)at(2.00,-3.50){};
                \node(4)at(4.00,-5.25){};
                \node(6)at(6.00,-5.25){};
                \node(1)at(1.00,0.00){\begin{math}a b c\end{math}};
                \node(3)at(3.00,-1.75){\begin{math}a b c\end{math}};
                \node(5)at(5.00,-3.50){\begin{math}b c\end{math}};
                \draw(0)--(1);
                \draw(2)--(3);
                \draw(3)--(1);
                \draw(4)--(5);
                \draw(5)--(3);
                \draw(6)--(5);
                \node(r)at(1.00,1.31){};
                \draw(r)--(1);
        \end{scope}\end{tikzpicture}};
        \node(E)at(3,-5.75){
            \begin{tikzpicture}
            [baseline=(current bounding box.center),xscale=.3,yscale=.32]
            \begin{scope}
                \node(0)at(0.00,-1.75){};
                \node(2)at(2.00,-3.50){};
                \node(4)at(4.00,-5.25){};
                \node(6)at(6.00,-5.25){};
                \node(1)at(1.00,0.00){\begin{math}a c\end{math}};
                \node(3)at(3.00,-1.75){\begin{math}a b c\end{math}};
                \node(5)at(5.00,-3.50){\begin{math}a b c\end{math}};
                \draw(0)--(1);
                \draw(2)--(3);
                \draw(3)--(1);
                \draw(4)--(5);
                \draw(5)--(3);
                \draw(6)--(5);
                \node(r)at(1.00,1.31){};
                \draw(r)--(1);
            \end{scope}\end{tikzpicture}};
        \node(F)at(0,-8.5){
            \begin{tikzpicture}
            [baseline=(current bounding box.center),xscale=.3,yscale=.32]
            \begin{scope}
                \node(0)at(0.00,-1.75){};
                \node(2)at(2.00,-3.50){};
                \node(4)at(4.00,-5.25){};
                \node(6)at(6.00,-5.25){};
                \node(1)at(1.00,0.00){\begin{math}a b c\end{math}};
                \node(3)at(3.00,-1.75){\begin{math}a b c\end{math}};
                \node(5)at(5.00,-3.50){\begin{math}a b c\end{math}};
                \draw(0)--(1);
                \draw(2)--(3);
                \draw(3)--(1);
                \draw(4)--(5);
                \draw(5)--(3);
                \draw(6)--(5);
                \node(r)at(1.00,1.31){};
                \draw(r)--(1);
            \end{scope}\end{tikzpicture}};
        \draw[EdgeRew](A)--(B);
        \draw[EdgeRew](A)--(C);
        \draw[EdgeRew](B)--(D);
        \draw[EdgeRew](C)--(E);
        \draw[EdgeRew](D)--(F);
        \draw[EdgeRew](E)--(F);
    \end{tikzpicture}}\,,
    \end{equation}
    \end{minipage}
    \quad
    \begin{minipage}[c]{.29\linewidth}
    \begin{equation}
    \scalebox{.5}{\begin{tikzpicture}
    [baseline=(current bounding box.center),xscale=.8,yscale=1.1]
        \node(A)at(0,0){
            \begin{tikzpicture}
            [baseline=(current bounding box.center),xscale=.3,yscale=.32]
            \begin{scope}
                \node(0)at(0.00,-1.75){};
                \node(2)at(2.00,-3.50){};
                \node(4)at(4.00,-5.25){};
                \node(6)at(6.00,-5.25){};
                \node(1)at(1.00,0.00){\begin{math}a\end{math}};
                \node(3)at(3.00,-1.75){\begin{math}b\end{math}};
                \node(5)at(5.00,-3.50){\begin{math}c\end{math}};
                \draw(0)--(1);
                \draw(2)--(3);
                \draw(3)--(1);
                \draw(4)--(5);
                \draw(5)--(3);
                \draw(6)--(5);
                \node(r)at(1.00,1.31){};
                \draw(r)--(1);
            \end{scope}\end{tikzpicture}};
        \node(B)at(-3,-2.5){
            \begin{tikzpicture}
            [baseline=(current bounding box.center),xscale=.3,yscale=.32]
            \begin{scope}
                \node(0)at(0.00,-1.75){};
                \node(2)at(2.00,-3.50){};
                \node(4)at(4.00,-5.25){};
                \node(6)at(6.00,-5.25){};
                \node(1)at(1.00,0.00){\begin{math}a b\end{math}};
                \node(3)at(3.00,-1.75){\begin{math}a b\end{math}};
                \node(5)at(5.00,-3.50){\begin{math}c\end{math}};
                \draw(0)--(1);
                \draw(2)--(3);
                \draw(3)--(1);
                \draw(4)--(5);
                \draw(5)--(3);
                \draw(6)--(5);
                \node(r)at(1.00,1.31){};
                \draw(r)--(1);
            \end{scope}\end{tikzpicture}};
        \node(C)at(3,-2.5){
            \begin{tikzpicture}
            [baseline=(current bounding box.center),xscale=.3,yscale=.32]
            \begin{scope}
                \node(0)at(0.00,-1.75){};
                \node(2)at(2.00,-3.50){};
                \node(4)at(4.00,-5.25){};
                \node(6)at(6.00,-5.25){};
                \node(1)at(1.00,0.00){\begin{math}a\end{math}};
                \node(3)at(3.00,-1.75){\begin{math}b c\end{math}};
                \node(5)at(5.00,-3.50){\begin{math}b c\end{math}};
                \draw(0)--(1);
                \draw(2)--(3);
                \draw(3)--(1);
                \draw(4)--(5);
                \draw(5)--(3);
                \draw(6)--(5);
                \node(r)at(1.00,1.31){};
                \draw(r)--(1);
            \end{scope}\end{tikzpicture}};
        \node(D)at(-3,-5.75){
            \begin{tikzpicture}
            [baseline=(current bounding box.center),xscale=.3,yscale=.32]
            \begin{scope}
                \node(0)at(0.00,-1.75){};
                \node(2)at(2.00,-3.50){};
                \node(4)at(4.00,-5.25){};
                \node(6)at(6.00,-5.25){};
                \node(1)at(1.00,0.00){\begin{math}a b\end{math}};
                \node(3)at(3.00,-1.75){\begin{math}a b c\end{math}};
                \node(5)at(5.00,-3.50){\begin{math}a b c\end{math}};
                \draw(0)--(1);
                \draw(2)--(3);
                \draw(3)--(1);
                \draw(4)--(5);
                \draw(5)--(3);
                \draw(6)--(5);
                \node(r)at(1.00,1.31){};
                \draw(r)--(1);
        \end{scope}\end{tikzpicture}};
        \node(E)at(3,-5.75){
            \begin{tikzpicture}
            [baseline=(current bounding box.center),xscale=.3,yscale=.32]
            \begin{scope}
                \node(0)at(0.00,-1.75){};
                \node(2)at(2.00,-3.50){};
                \node(4)at(4.00,-5.25){};
                \node(6)at(6.00,-5.25){};
                \node(1)at(1.00,0.00){\begin{math}a b c\end{math}};
                \node(3)at(3.00,-1.75){\begin{math}a b c\end{math}};
                \node(5)at(5.00,-3.50){\begin{math}b c\end{math}};
                \draw(0)--(1);
                \draw(2)--(3);
                \draw(3)--(1);
                \draw(4)--(5);
                \draw(5)--(3);
                \draw(6)--(5);
                \node(r)at(1.00,1.31){};
                \draw(r)--(1);
            \end{scope}\end{tikzpicture}};
        \node(F)at(0,-8.5){
            \begin{tikzpicture}
            [baseline=(current bounding box.center),xscale=.3,yscale=.32]
            \begin{scope}
                \node(0)at(0.00,-1.75){};
                \node(2)at(2.00,-3.50){};
                \node(4)at(4.00,-5.25){};
                \node(6)at(6.00,-5.25){};
                \node(1)at(1.00,0.00){\begin{math}a b c\end{math}};
                \node(3)at(3.00,-1.75){\begin{math}a b c\end{math}};
                \node(5)at(5.00,-3.50){\begin{math}a b c\end{math}};
                \draw(0)--(1);
                \draw(2)--(3);
                \draw(3)--(1);
                \draw(4)--(5);
                \draw(5)--(3);
                \draw(6)--(5);
                \node(r)at(1.00,1.31){};
                \draw(r)--(1);
            \end{scope}\end{tikzpicture}};
        \draw[EdgeRew](A)--(B);
        \draw[EdgeRew](A)--(C);
        \draw[EdgeRew](B)--(D);
        \draw[EdgeRew](C)--(E);
        \draw[EdgeRew](D)--(F);
        \draw[EdgeRew](E)--(F);
    \end{tikzpicture}}\,.
    \end{equation}
    \end{minipage}
    \end{subequations}

    Therefore, this shows that all critical pairs of $\Rew_\Pca'$ of
    trees consisting in three internal nodes or less are joinable. Since
    $\Rew_\Pca'$ is of degree $2$ and is terminating
    (Lemma~\ref{lem:terminating_rewrite_rule}), by
    Lemma~\ref{lem:confluent_few_nodes}, this implies that $\Rew_\Pca'$
    is confluent.
\end{proof}
\medskip

In Lemma~\ref{lem:confluent_rewrite_rule}, the condition on $\Pca$ to be
a forest poset is a necessary condition. Indeed, by setting
\begin{equation}
    \Pca :=
    \begin{tikzpicture}
        [baseline=(current bounding box.center),xscale=.4,yscale=.4]
        \node[Vertex](1)at(-.75,0){\begin{math}1\end{math}};
        \node[Vertex](2)at(.75,0){\begin{math}2\end{math}};
        \node[Vertex](3)at(0,-1){\begin{math}3\end{math}};
        \draw[Edge](1)--(3);
        \draw[Edge](2)--(3);
    \end{tikzpicture}\,,
\end{equation}
the critical tree
\begin{equation}
    \begin{tikzpicture}
    [baseline=(current bounding box.center),xscale=.25,yscale=.25]
        \node(0)at(0.00,-1.75){};
        \node(2)at(2.00,-5.25){};
        \node(4)at(4.00,-5.25){};
        \node(6)at(6.00,-3.50){};
        \node(1)at(1.00,0.00){\begin{math}\Op_1\end{math}};
        \node(3)at(3.00,-3.50){\begin{math}\Op_2\end{math}};
        \node(5)at(5.00,-1.75){\begin{math}\Op_3\end{math}};
        \draw(0)--(1);
        \draw(2)--(3);
        \draw(3)--(5);
        \draw(4)--(3);
        \draw(5)--(1);
        \draw(6)--(5);
        \node(r)at(1.00,1.75){};
        \draw(r)--(1);
    \end{tikzpicture}
\end{equation}
of $\Rew_\Pca'$ admits the critical pair consisting in the two trees
\begin{equation}
    \begin{tikzpicture}
    [baseline=(current bounding box.center),xscale=.25,yscale=.25]
        \node(0)at(0.00,-1.75){};
        \node(2)at(2.00,-5.25){};
        \node(4)at(4.00,-5.25){};
        \node(6)at(6.00,-3.50){};
        \node(1)at(1.00,0.00){\begin{math}\Op_1\end{math}};
        \node(3)at(3.00,-3.50){\begin{math}\Op_2\end{math}};
        \node(5)at(5.00,-1.75){\begin{math}\Op_1\end{math}};
        \draw(0)--(1);
        \draw(2)--(3);
        \draw(3)--(5);
        \draw(4)--(3);
        \draw(5)--(1);
        \draw(6)--(5);
        \node(r)at(1.00,1.75){};
        \draw(r)--(1);
    \end{tikzpicture}\,,\quad
    \begin{tikzpicture}
    [baseline=(current bounding box.center),xscale=.25,yscale=.28]
        \node(0)at(0.00,-1.75){};
        \node(2)at(2.00,-3.50){};
        \node(4)at(4.00,-5.25){};
        \node(6)at(6.00,-5.25){};
        \node(1)at(1.00,0.00){\begin{math}\Op_1\end{math}};
        \node(3)at(3.00,-1.75){\begin{math}\Op_2\end{math}};
        \node(5)at(5.00,-3.50){\begin{math}\Op_2\end{math}};
        \draw(0)--(1);
        \draw(2)--(3);
        \draw(3)--(1);
        \draw(4)--(5);
        \draw(5)--(3);
        \draw(6)--(5);
        \node(r)at(1.00,1.5){};
        \draw(r)--(1);
    \end{tikzpicture}\,.
\end{equation}
Since these two trees are normal forms of $\Rew_\Pca'$, this critical
pair is not joinable, hence showing that $\Rew_\Pca'$ is not confluent.
\medskip

\subsubsection{Koszulity}

\begin{Theorem} \label{thm:koszulity}
    Let $\Pca$ be a forest poset. Then, the operad $\As(\Pca)$ is Koszul
    and the set $\NormalF(\Pca)$ of syntax trees of
    $\Free(\FreeGen_\Pca^\Op)$ forms a Poincaré-Birkhoff-Witt basis
    of~$\As(\Pca)$.
\end{Theorem}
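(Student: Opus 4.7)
The plan is to assemble the machinery built up in Lemmas \ref{lem:rewrite_rule_space_relations}, \ref{lem:terminating_rewrite_rule}, \ref{lem:normal_forms}, and \ref{lem:confluent_rewrite_rule} and feed the result into the Koszulity criterion of Lemma \ref{lem:koszulity_criterion_pbw}. First I would recall that, by Lemma \ref{lem:rewrite_rule_space_relations}, the rewrite rule $\Rew_\Pca'$ defined by \eqref{equ:rewrite_1} and \eqref{equ:rewrite_2} is an orientation of the quadratic space of relations $\FreeRel_\Pca^\Op$ presenting $\As(\Pca)$. By Lemma \ref{lem:terminating_rewrite_rule}, this orientation is terminating on $\Free(\FreeGen_\Pca^\Op)$, and this step does not yet require the forest-poset hypothesis.

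Next I would invoke the forest-poset assumption for the first and only time: Lemma \ref{lem:confluent_rewrite_rule} states precisely that under this hypothesis $\Rew_\Pca'$ is confluent. Combining termination and confluence, $\Rew_\Pca'$ is a convergent orientation of $\FreeRel_\Pca^\Op$. Applying Lemma \ref{lem:koszulity_criterion_pbw} to the quadratic presentation $(\FreeGen_\Pca^\Op, \FreeRel_\Pca^\Op)$ together with this convergent orientation immediately yields that $\As(\Pca)$ is Koszul.

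For the second assertion, I would appeal to the comment made right after Lemma \ref{lem:koszulity_criterion_pbw}, according to which, whenever such an orientation $\Rew_\Pca'$ is convergent, its set of normal forms constitutes a Poincaré-Birkhoff-Witt basis of the presented operad. Since Lemma \ref{lem:normal_forms} identifies these normal forms with the set $\NormalF(\Pca)$, the family $\NormalF(\Pca)$ is the desired PBW basis of $\As(\Pca)$.

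There is no genuine obstacle at this stage: the theorem is a direct corollary of the four preceding lemmas, each of which contributes one hypothesis of Lemma \ref{lem:koszulity_criterion_pbw}. The technically substantive step of the whole argument is already absorbed into Lemma \ref{lem:confluent_rewrite_rule}, where the forest-poset condition is used, through Lemma \ref{lem:forest_poset_max_three_elements}, to guarantee that the iterated minima $a \Min_\Pca b \Min_\Pca c$ appearing in the confluence diagrams are well defined and thus that all degree-at-most-three critical pairs close up; the few-nodes criterion of Lemma \ref{lem:confluent_few_nodes} then upgrades this to full confluence.
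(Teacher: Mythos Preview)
Your proof is correct and follows essentially the same route as the paper: assemble Lemmas~\ref{lem:rewrite_rule_space_relations}, \ref{lem:terminating_rewrite_rule}, and \ref{lem:confluent_rewrite_rule} to obtain a convergent orientation of $\FreeRel_\Pca^\Op$, apply Lemma~\ref{lem:koszulity_criterion_pbw} for Koszulity, and then invoke Lemma~\ref{lem:normal_forms} together with the remark after Lemma~\ref{lem:koszulity_criterion_pbw} for the PBW basis. Your additional commentary on where the forest-poset hypothesis enters is accurate and matches the paper's organization.
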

\begin{proof}
    By Lemma~\ref{lem:rewrite_rule_space_relations}, the rewrite rule
    $\Rew_\Pca'$, defined by~\eqref{equ:rewrite_1}
    and~\eqref{equ:rewrite_2}, is an orientation of the space of
    relations $\FreeRel_\Pca^\Op$ of $\As(\Pca)$. Moreover, by
    Lemmas~\ref{lem:terminating_rewrite_rule}
    and~\ref{lem:confluent_rewrite_rule}, this rewrite rule is convergent
    when $\Pca$ is a forest poset. Therefore, by
    Lemma~\ref{lem:koszulity_criterion_pbw}, $\As(\Pca)$ is Koszul.
    Finally, by Lemma~\ref{lem:normal_forms}, $\NormalF(\Pca)$ is the
    set of all normal forms of $\Rew_\Pca'$ and, by definition, forms a
    Poincaré-Birkhoff-Witt basis of~$\As(\Pca)$.
\end{proof}
\medskip

\subsection{Dimensions and realization}
The Koszulity, and more specifically the existence of a
Poincaré-Birkhoff-Witt basis $\NormalF(\Pca)$ highlighted by
Theorem~\ref{thm:koszulity} for $\As(\Pca)$ when $\Pca$ is a forest
poset, lead to a combinatorial realization of $\As(\Pca)$. Before
describing this realization, we shall provide a functional equation for
the Hilbert series of $\As(\Pca)$.
\medskip

\subsubsection{Dimensions}

\begin{Proposition} \label{prop:hilbert_series}
    Let $\Pca$ be a forest poset. Then, the Hilbert series
    $\Hilbert_\Pca(t)$ of $\As(\Pca)$ satisfies
    \begin{equation} \label{equ:hilbert_series}
        \Hilbert_\Pca(t) = t + \sum_{a \in \Pca} \Hilbert_\Pca^a(t),
    \end{equation}
    where for all $a \in \Pca$, the $\Hilbert_\Pca^a(t)$ satisfy
    \begin{equation} \label{equ:hilbert_series_h_a}
        \Hilbert_\Pca^a(t) =
        \left(t + \bar \Hilbert_\Pca^a(t)\right)
        \left(t + \bar \Hilbert_\Pca^a(t) + \Hilbert_\Pca^a(t)\right),
    \end{equation}
    and for all $a \in \Pca$, the $\bar \Hilbert_\Pca^a(t)$ satisfy
    \begin{equation} \label{equ:hilbert_series_h_a_incomp}
        \bar \Hilbert_\Pca^a(t) =
        \sum_{\substack{b \in \Pca \\ a \not \Ord_\Pca b \\
                b \not \Ord_\Pca a}}
        \Hilbert_\Pca^b(t).
    \end{equation}
\end{Proposition}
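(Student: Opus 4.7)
The plan is to leverage the Poincaré--Birkhoff--Witt basis $\NormalF(\Pca)$ of $\As(\Pca)$ provided by Theorem~\ref{thm:koszulity}. This identification gives $\dim \As(\Pca)(n) = \# \NormalF(\Pca)(n)$ for all $n \geq 1$, so that $\Hilbert_\Pca(t)$ is exactly the generating series of $\NormalF(\Pca)$ enumerated by number of leaves. It then suffices to analyse $\NormalF(\Pca)$ combinatorially using the explicit characterization of normal forms supplied by Lemma~\ref{lem:normal_forms}.

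First, I would partition $\NormalF(\Pca)$ according to the label of the root. The unique leaf contributes the term $t$ to $\Hilbert_\Pca(t)$, while for each $a \in \Pca$ the subset $\NormalF(\Pca)^a$ of normal forms whose root is labeled $\Op_a$ contributes $\Hilbert_\Pca^a(t)$ by definition. Summing these contributions yields~\eqref{equ:hilbert_series} immediately.

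Next, I would derive~\eqref{equ:hilbert_series_h_a} via a bijective decomposition at the root. Any $\Tfr \in \NormalF(\Pca)^a$ is uniquely determined by the pair $(\Sfr_L, \Sfr_R)$ of its left and right subtrees, both of which must themselves lie in $\NormalF(\Pca)$ so that no rewriting can occur deeper in $\Tfr$. By Lemma~\ref{lem:normal_forms}, the local avoidance conditions at the root of $\Tfr$ force $\Sfr_L$ to be either the leaf or a normal form whose root label $\Op_b$ has $b$ incomparable with $a$, contributing the generating series $t + \bar \Hilbert_\Pca^a(t)$. Similarly, $\Sfr_R$ is either the leaf, a normal form with root labeled $\Op_a$, or a normal form whose root label $\Op_b$ has $b$ incomparable with $a$, contributing $t + \bar \Hilbert_\Pca^a(t) + \Hilbert_\Pca^a(t)$. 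Since $\Sfr_L$ and $\Sfr_R$ range independently, multiplying these two series gives $\Hilbert_\Pca^a(t)$, which is exactly~\eqref{equ:hilbert_series_h_a}.

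Finally,~\eqref{equ:hilbert_series_h_a_incomp} follows directly from the definition of $\bar \Hilbert_\Pca^a(t)$ together with the same partition of $\NormalF(\Pca)$ by root label: summing $\Hilbert_\Pca^b(t)$ over all $b \in \Pca$ incomparable with $a$ enumerates precisely those normal forms whose root label is incomparable with $a$. No real obstacle is anticipated; the core of the argument is the observation that the local constraints on labels at the root of a normal form, as dictated by Lemma~\ref{lem:normal_forms}, translate verbatim into the claimed functional equation.
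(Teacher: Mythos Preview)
Your proposal is correct and follows essentially the same approach as the paper: both arguments invoke Theorem~\ref{thm:koszulity} to identify $\Hilbert_\Pca(t)$ with the generating series of $\NormalF(\Pca)$, partition normal forms by root label, and then decompose a root-labeled-$\Op_a$ tree into its left and right subtrees subject to the constraints of Lemma~\ref{lem:normal_forms} to obtain the product~\eqref{equ:hilbert_series_h_a}. The only cosmetic difference is that the paper also references the comb description~\eqref{equ:normal_forms}, but its actual enumeration proceeds via the same one-step left/right split you use.
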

\begin{proof}
    By Theorem~\ref{thm:koszulity}, $\As(\Pca)$ admits the set
    $\NormalF(\Pca)$ as a Poincaré-Birkhoff-Witt basis, whose definition
    appears in the statement of Lemma~\ref{lem:normal_forms}. Then,
    by~\cite{Hof10}, for any $n \geq 1$, the dimension of $\As(\Pca)(n)$
    is equal to the cardinality of $\NormalF(\Pca)(n)$.
    \smallskip

    To enumerate the trees of $\NormalF(\Pca)$, let us consider their
    description provided by Lemma~\ref{lem:normal_forms} and their
    recursive general form provided by~\eqref{equ:normal_forms}. In this
    manner, let $\Tfr$ be a tree of $\NormalF(\Pca)$ such that its root
    is labeled by $\Op_a$, $a \in \Pca$. Then, $\Tfr$ has as a left
    subtree an element of $\NormalF(\Pca)$ equal to the leaf or with a
    root labeled by $\Op_b$, $b \in \Pca$, such that $a$ and $b$ are
    incomparable in $\Pca$. Moreover, $\Tfr$ has as a right subtree an
    element of $\NormalF(\Pca)$ equal to the leaf, or with a root
    labeled by $\Op_c$, $c \in \Pca$, such that $a$ and $c$ are
    incomparable in $\Pca$, or with a root labeled by $\Op_a$. This
    explains~\eqref{equ:hilbert_series_h_a} since $\Hilbert_\Pca^a(t)$
    is the generating series of the trees of $\NormalF(\Pca)$ such that
    their roots are labeled by $\Op_a$, $a \in \Pca$, wherein
    $\bar \Hilbert_\Pca^a(t)$ is the series of trees of $\NormalF(\Pca)$
    such that their roots are labeled by $\Op_b$, $b \in \Pca$, where
    $a$ and $b$ are incomparable in $\Pca$. This also
    explains~\eqref{equ:hilbert_series_h_a_incomp}. Finally, since any
    tree of $\NormalF(\Pca)$ is the leaf or has a root labeled by a
    $\Op_a$, $a \in \Pca$, the generating series of $\NormalF(\Pca)$
    satisfies~\eqref{equ:hilbert_series}.
\end{proof}
\medskip

For instance, let us use Proposition~\ref{prop:hilbert_series} for the
operad $\As(\Pca)$ when $\Pca$ is the total order on the set $[\ell]$,
$\ell \geq 0$. This operad is the multiassociative operad~\cite{Gir15b},
whose definition is recalled in Section~\ref{subsubsec:first_examples}.
By~\eqref{equ:hilbert_series_h_a_incomp}, we have
\begin{equation}
    \bar \Hilbert_\Pca^a(t) = 0,
    \qquad a \in [\ell],
\end{equation}
and hence, by~\eqref{equ:hilbert_series_h_a},
\begin{equation}
    \Hilbert_\Pca^a(t) = \frac{t^2}{1 - t},
    \qquad a \in [\ell].
\end{equation}
Then, by~\eqref{equ:hilbert_series}, the Hilbert series of $\As(\Pca)$
satisfies
\begin{equation}
    \Hilbert_\Pca(t) = t + \frac{\ell t^2}{1 - t},
    \qquad \ell \geq 0.
\end{equation}
\medskip

Let us use Proposition~\ref{prop:hilbert_series} for the operad
$\As(\Pca)$ when $\Pca$ is the trivial poset on the set $[\ell]$,
$\ell \geq 0$. This operad is the dual multiassociative
operad~\cite{Gir15b}, whose definition is recalled in
Section~\ref{subsubsec:first_examples}.
By~\eqref{equ:hilbert_series_h_a_incomp}, one has
\begin{equation}
    \bar \Hilbert_\Pca^a(t) =
    \sum_{\substack{b \in [\ell] \\ b \ne a}} \bar \Hilbert_\Pca^b(t),
    \qquad a \in [\ell],
\end{equation}
implying, by~\eqref{equ:hilbert_series}, that
\begin{equation}
    \bar \Hilbert_\Pca^a(t) = \Hilbert_\Pca(t) - t - \Hilbert_\Pca^a(t),
    \qquad a \in [\ell].
\end{equation}
Now, by~\eqref{equ:hilbert_series_h_a}, we obtain
\begin{equation}
    \Hilbert_\Pca^a(t) =
    \frac{\Hilbert_\Pca(t)^2}{1 + \Hilbert_\Pca(t)},
    \qquad a \in [\ell].
\end{equation}
Therefore, by~\eqref{equ:hilbert_series}, the
Hilbert series of $\As(\Pca)$ satisfies the quadratic functional
equation
\begin{equation}
    t + (t - 1)\Hilbert_\Pca(t) + (\ell - 1) \Hilbert_\Pca(t)^2 = 0,
    \qquad \ell \geq 0,
\end{equation}
and expresses as
\begin{equation}
    \Hilbert_\Pca(t) =
    \frac{1 - t - \sqrt{1 + (2 - 4 \ell) t + t^2}}
    {2 (\ell - 1)},
    \qquad \ell = 0 \mbox{ or } \ell \geq 2.
\end{equation}
The dimensions of the first homogeneous components of $\As(\Pca)$ are
\begin{equation}
    1, 2, 6, 22, 90, 394, 1806, 8558, 41586, 206098,
    \qquad \ell = 2,
\end{equation}
\begin{equation}
    1, 3, 15, 93, 645, 4791, 37275, 299865, 2474025, 20819307,
    \qquad \ell = 3,
\end{equation}
\begin{equation}
    1, 4, 28, 244, 2380, 24868, 272188, 3080596, 35758828, 423373636,
    \qquad \ell = 4,
\end{equation}
\begin{equation}
    1, 5, 45, 505, 6345, 85405, 1204245, 17558705, 262577745, 4005148405,
    \qquad \ell = 5.
\end{equation}
The first one is Sequence~\Sloane{A006318}, the second one is
Sequence~\Sloane{A103210}, and the fourth one is Sequence~\Sloane{A103211}
and the last one is Sequence~\Sloane{A133305} of~\cite{Slo}.
\medskip

Finally, let us use Proposition~\ref{prop:hilbert_series} for the operad
$\As(\Pca)$ when $\Pca$ is the forest poset
\begin{equation}
    \Pca :=
    \begin{tikzpicture}
        [baseline=(current bounding box.center),xscale=.5,yscale=.5]
        \node[Vertex](1)at(0,0){\begin{math}1\end{math}};
        \node[Vertex](2)at(0,-1){\begin{math}2\end{math}};
        \node[Vertex](3)at(1,0){\begin{math}3\end{math}};
        \node[Vertex](4)at(1,-1){\begin{math}4\end{math}};
        \draw[Edge](1)--(2);
        \draw[Edge](3)--(4);
    \end{tikzpicture}\,.
\end{equation}
By~\eqref{equ:hilbert_series_h_a_incomp}, one has
\begin{subequations}
\begin{equation}
    \bar \Hilbert_\Pca^1(t) = \bar \Hilbert_\Pca^2(t) =
    \Hilbert_\Pca^3(t) + \Hilbert_\Pca^4(t),
\end{equation}
\begin{equation}
    \bar \Hilbert_\Pca^3(t) = \bar \Hilbert_\Pca^4(t) =
    \Hilbert_\Pca^1(t) + \Hilbert_\Pca^2(t),
\end{equation}
\end{subequations}
and, by~\eqref{equ:hilbert_series_h_a} and straightforward computations,
we obtain that
\begin{equation}
    \Hilbert_\Pca^1(t) = \Hilbert_\Pca^2(t) =
    \Hilbert_\Pca^3(t) = \Hilbert_\Pca^4(t),
\end{equation}
so that the Hilbert series of $\As(\Pca)$ satisfies the quadratic
functional equation
\begin{equation}
    \frac{1}{2}t + \frac{1}{4}t^2 +
    \left(t - \frac{1}{2}\right)\Hilbert_\Pca(t) +
    \frac{3}{4}\Hilbert_\Pca(t)^2 = 0.
\end{equation}
This Hilbert series expresses as
\begin{equation}
    \Hilbert_\Pca(t) =
    \frac{1 - 2t - \sqrt{1 - 10t + t^2}}{3},
\end{equation}
and the dimensions of the first homogeneous components of $\As(\Pca)$
are
\begin{equation}
    1, 4, 20, 124, 860, 6388, 49700, 399820, 3298700, 27759076.
\end{equation}
Terms of this sequence are the ones of Sequence~\Sloane{A107841}
of~\cite{Slo} multiplied by $2$.
\medskip

\subsubsection{Realization} \label{subsubsec:realization_as_poset_forest}
Let us describe a combinatorial realization of $\As(\Pca)$ when $\Pca$
is a forest poset in terms of Schröder trees with a certain labeling
and through an algorithm to compute their partial composition. Recall
that a {\em Schröder tree}~\cite{Sta11} is a planar rooted tree wherein
all internal nodes have two or more children.
\medskip

If $\Pca$ is a poset (not necessarily a forest poset just now), a
{\em $\Pca$-Schröder tree} is a Schröder tree where internal nodes are
labeled on $\Pca$. For any element $a$ of $\Pca$ and any $n \geq 2$, we
denote by $\Corolla_a^n$ the $\Pca$-Schröder tree consisting in a single
internal node labeled by $a$ attached to $n$ leaves. We call these trees
{\em $\Pca$-corollas}. A {\em $\Pca$-alternating Schröder tree} is a
$\Pca$-Schröder tree $\Tfr$ such that for any internal node $y$ of $\Tfr$
having a father $x$, the labels of $x$ and $y$ are incomparable in
$\Pca$. We denote by $\SetASchr(\Pca)$ the set of $\Pca$-alternating
Schröder trees and by $\SetASchr(\Pca)(n)$, $n \geq 1$, the set
$\SetASchr(\Pca)$ restricted to trees with exactly $n$ leaves. Any tree
$\Tfr$ of $\SetASchr(\Pca)$ different from the leaf is of the recursive
unique general form
\begin{equation} \label{equ:description_aschr}
    \Tfr =
    \begin{tikzpicture}
        [baseline=(current bounding box.center),xscale=.6,yscale=.3]
        \node(0)at(0.00,-2.00){\footnotesize \begin{math}\Sfr_1\end{math}};
        \node(3)at(2.00,-2.00){\footnotesize \begin{math}\Sfr_\ell\end{math}};
        \node[Node](1)at(1.00,0.00){\begin{math}a\end{math}};
        \draw[Edge](0)--(1);
        \draw[Edge](3)--(1);
        \node(r)at(1.00,1.25){};
        \draw[Edge](r)--(1);
        \node[below of=1,node distance=.6cm]{\footnotesize\begin{math}\dots\end{math}};
    \end{tikzpicture}\,,
\end{equation}
where $a \in \Pca$ and for any $i \in [\ell]$, $\Sfr_i$ is a tree of
$\SetASchr(\Pca)$ such that $\Sfr_i$ is a leaf or its root is labeled by
a $b \in \Pca$ and $a$ and $b$ and incomparable in~$\Pca$.
\medskip

Relying on the description of the elements of $\NormalF(\Pca)$ provided
by Lemma~\ref{lem:normal_forms} and on their recursive general form
provided by~\eqref{equ:normal_forms}, let us consider the map
\begin{equation}
    \ASchrMap_\Pca : \NormalF(\Pca)(n) \to \SetASchr(\Pca)(n),
    \qquad n \geq 1,
\end{equation}
defined recursively by sending the leaf to the leaf and, for any tree
$\Tfr$ of $\NormalF(\Pca)$ different from the leaf, by
\begin{equation} \label{equ:recursive_description_aschr_map}
    \ASchrMap_\Pca(\Tfr) =
    \ASchrMap_\Pca
    \left(
    \begin{tikzpicture}
        [baseline=(current bounding box.center),xscale=.4,yscale=.35]
        \node(0)at(0.00,-1.67){\begin{math}\Sfr_1\end{math}};
        \node(2)at(2.00,-3.33){\begin{math}\Sfr_{\ell - 1}\end{math}};
        \node(4)at(4.00,-3.33){\begin{math}\Sfr_\ell\end{math}};
        \node(1)at(1.00,0.00){\begin{math}\Op_a\end{math}};
        \node(3)at(3.00,-1.67){\begin{math}\Op_a\end{math}};
        \draw(0)--(1);
        \draw(2)--(3);
        \draw[densely dashed](3)--(1);
        \draw(4)--(3);
        \node(r)at(1.00,1.25){};
        \draw(r)--(1);
    \end{tikzpicture}
    \right) :=
    \begin{tikzpicture}
        [baseline=(current bounding box.center),xscale=.9,yscale=.3]
        \node(0)at(0.00,-2.00)
            {\footnotesize \begin{math}\ASchrMap_\Pca(\Sfr_1)\end{math}};
        \node(3)at(2.00,-2.00)
            {\footnotesize \begin{math}\ASchrMap_\Pca(\Sfr_\ell)\end{math}};
        \node[Node](1)at(1.00,0.00){\begin{math}a\end{math}};
        \draw[Edge](0)--(1);
        \draw[Edge](3)--(1);
        \node(r)at(1.00,1.25){};
        \draw[Edge](r)--(1);
        \node[below of=1,node distance=.6cm]{\footnotesize\begin{math}\dots\end{math}};
    \end{tikzpicture}\,,
\end{equation}
where $a \in \Pca$ and, in the syntax tree
of~\eqref{equ:recursive_description_aschr_map}, the dashed edge denotes
a right comb tree wherein internal nodes are labeled by $\Op_a$ and for
any $i \in [\ell]$, $\Sfr_i$ is a tree of $\NormalF(\Pca)$ such that
$\Sfr_i$ is the leaf or its root is labeled by $\Op_b$, $b \in \Pca$,
and $a$ and $b$ are incomparable in $\Pca$. It is immediate that
$\ASchrMap_\Pca(\Tfr)$ is a $\Pca$-alternating Schröder tree, so that
$\ASchrMap_\Pca$ is a well-defined map.
\medskip

\begin{Lemma} \label{lem:bijection_pbw_basis_aschr}
    Let $\Pca$ be a poset. Then, for any $n \geq 1$, the map
    $\ASchrMap_\Pca$ is a bijection between the set of syntax trees of
    $\NormalF(\Pca)(n)$ with $n$ leaves and the set $\SetASchr(\Pca)(n)$
    of $\Pca$-alternating Schröder trees with $n$ leaves.
\end{Lemma}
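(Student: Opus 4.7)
The plan is to prove the statement by induction on the arity $n$, exploiting the parallel unique recursive decompositions available on both sides. The base case $n = 1$ is immediate since both $\NormalF(\Pca)(1)$ and $\SetASchr(\Pca)(1)$ contain only the leaf, which $\ASchrMap_\Pca$ fixes by definition.

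For the inductive step, I would first argue that the decomposition~\eqref{equ:normal_forms} is canonical: for any $\Tfr \in \NormalF(\Pca)(n)$ distinct from the leaf, the root label $\Op_a$ is determined, and the integer $\ell - 1$ is the maximal length of a path of $\Op_a$-labeled nodes starting at the root and descending along right children. Indeed, by Lemma~\ref{lem:normal_forms}, in a normal form, the right child of an $\Op_a$-labeled node is either a leaf, or labeled by $\Op_a$ itself, or labeled by some $\Op_b$ with $a$ and $b$ incomparable; this forces the $\Op_a$-spine to end uniquely at a well-determined node. The subtrees $\Sfr_1, \dots, \Sfr_\ell$ are then uniquely extracted, and Lemma~\ref{lem:normal_forms} guarantees that each $\Sfr_i$ is a leaf or has root labeled by some $\Op_b$ with $a$ and $b$ incomparable in $\Pca$. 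This matches exactly the structural description~\eqref{equ:description_aschr} of a $\Pca$-alternating Schröder tree with root labeled $a$ and children $\ASchrMap_\Pca(\Sfr_1), \dots, \ASchrMap_\Pca(\Sfr_\ell)$.

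To obtain injectivity and surjectivity in one stroke, I would exhibit an explicit candidate inverse $\Psi$. Given $\Tfr' \in \SetASchr(\Pca)(n)$ different from the leaf, presented as in~\eqref{equ:description_aschr} with root label $a$ and children $\Tfr'_1, \dots, \Tfr'_\ell$, define $\Psi(\Tfr')$ to be the right-comb syntax tree with $\ell - 1$ internal nodes all labeled $\Op_a$, whose successive left subtrees are $\Psi(\Tfr'_1), \dots, \Psi(\Tfr'_{\ell - 1})$ and whose terminal right subtree is $\Psi(\Tfr'_\ell)$; and set $\Psi$ of a leaf to be a leaf. By the induction hypothesis, each $\Psi(\Tfr'_i)$ belongs to $\NormalF(\Pca)$, and the alternation condition on $\Tfr'$ combined with Lemma~\ref{lem:normal_forms} guarantees that $\Psi(\Tfr')$ itself is a normal form. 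Preservation of arity on both sides is immediate from the fact that leaves correspond exactly under both maps. The identities $\ASchrMap_\Pca \circ \Psi = \mathrm{id}$ and $\Psi \circ \ASchrMap_\Pca = \mathrm{id}$ then follow by a direct induction from the matching recursive formulas~\eqref{equ:recursive_description_aschr_map} and the definition of $\Psi$.

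The heart of the argument is the canonicity of the decomposition~\eqref{equ:normal_forms}, which already follows formally from Lemma~\ref{lem:normal_forms}; I do not anticipate any serious obstacle, as once this is in place the remainder of the proof amounts to matching up recursive definitions.
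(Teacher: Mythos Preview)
Your proposal is correct and follows essentially the same approach as the paper: the paper's proof simply invokes the existence of an inverse $\ASchrMap_\Pca^{-1}$ by structural induction on the recursive descriptions~\eqref{equ:description_aschr} and~\eqref{equ:normal_forms}, and your argument spells out precisely this induction, including the explicit inverse $\Psi$ and the canonicity of the decomposition. The only cosmetic difference is that you induct on the arity $n$ rather than directly on the tree structure, which is equivalent here since the subtrees $\Sfr_i$ have strictly smaller arity.
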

\begin{proof}
    The existence of the map
    \begin{math}
        \ASchrMap_\Pca^{-1} : \SetASchr(\Pca)(n) \to \NormalF(\Pca)(n),
    \end{math}
    $n \geq 1$, being the inverse of $\ASchrMap_\Pca$, follows by
    structural induction on the trees of $\SetASchr(\Pca)$ and
    $\NormalF(\Pca)$, relying on their respective recursive descriptions
    provided by~\eqref{equ:description_aschr} and~\eqref{equ:normal_forms}.
    The statement of the lemma follows.
\end{proof}
\medskip

In order to define a partial composition for $\Pca$-alternating Schröder
trees, we introduce the following rewrite rule. When $\Pca$ is a forest
poset, consider the rewrite rule $\RewS_\Pca'$ on $\Pca$-Schröder trees
(not necessarily $\Pca$-alternating Schröder trees) satisfying
\begin{equation} \label{equ:rewrite_schroder}
    \begin{tikzpicture}
        [baseline=(current bounding box.center),xscale=.35,yscale=.25]
        \node[Leaf](0)at(0.00,-2.00){};
        \node[Leaf](2)at(1.00,-4.00){};
        \node[Leaf](4)at(3.00,-4.00){};
        \node[Leaf](5)at(4.00,-2.00){};
        \node[Node](1)at(2.00,0.00){\begin{math}b\end{math}};
        \node[Node](3)at(2.00,-2.00){\begin{math}a\end{math}};
        \draw[Edge](0)--(1);
        \draw[Edge](2)--(3);
        \draw[Edge](3)--(1);
        \draw[Edge](4)--(3);
        \draw[Edge](5)--(1);
        \node(r)at(2.00,1.50){};
        \draw[Edge](r)--(1);
        \node[right of=0,node distance=.32cm]
            {\footnotesize\begin{math}\dots\end{math}};
        \node[left of=5,node distance=.32cm]
            {\footnotesize\begin{math}\dots\end{math}};
        \node[right of=2,node distance=.4cm]
            {\footnotesize\begin{math}\dots\end{math}};
    \end{tikzpicture}
    \enspace \RewS_\Pca' \enspace
    \begin{tikzpicture}
        [baseline=(current bounding box.center),xscale=.45,yscale=.5]
        \node[Leaf](0)at(0.00,-1.50){};
        \node[Leaf](2)at(2.00,-1.50){};
        \node[Node](1)at(1.00,0.00){\begin{math}a \Min_\Pca b\end{math}};
        \draw[Edge](0)--(1);
        \draw[Edge](2)--(1);
        \node(r)at(1.00,1.5){};
        \draw[Edge](r)--(1);
        \node[below of=1,node distance=.8cm]
            {\footnotesize\begin{math}\dots\end{math}};
    \end{tikzpicture},
    \qquad
    a, b \in \Pca \mbox{ and }
    (a \Ord_\Pca b \mbox{ or } b \Ord_\Pca a).
\end{equation}
Equation~\eqref{equ:example_schroder_rewriting_steps} shows examples
of steps of rewritings by $\RewS_\Pca'$ for the poset $\Pca$ defined
in~\eqref{equ:example_forest_poset}.
\medskip

\begin{Lemma} \label{lem:rewrite_rule_schroder_trees}
    Let $\Pca$ be a poset. Then, the rewrite rule $\RewS_\Pca'$ on
    $\Pca$-Schröder trees is terminating and the set of normal forms of
    $\RewS_\Pca'$ is the set of $\Pca$-alternating Schröder trees.
    Moreover, when~$\Pca$ is a forest poset, $\RewS_\Pca'$ is confluent.
\end{Lemma}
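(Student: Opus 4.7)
The plan is to address the three claims — termination, the characterization of normal forms, and confluence under the forest hypothesis — in turn.

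Termination is immediate: each application of the rule~\eqref{equ:rewrite_schroder} contracts an edge between a parent–child pair of internal nodes into a single internal node, strictly decreasing the total number of internal nodes by one. Since this count is a nonnegative integer, no infinite sequence of rewritings can exist. Regarding normal forms, the left member of $\RewS_\Pca'$ matches exactly an internal node labeled by some $b \in \Pca$ that has an internal-node child labeled by some $a \in \Pca$ with $a$ and $b$ comparable. Consequently, a $\Pca$-Schröder tree is irreducible under $\RewS_\Pca'$ if and only if no parent–child pair of internal nodes carries comparable labels, which is precisely the definition of a $\Pca$-alternating Schröder tree.

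For confluence when $\Pca$ is a forest poset, I would invoke an analogue of Lemma~\ref{lem:confluent_few_nodes} (whose proof adapts essentially verbatim from syntax trees to Schröder trees, since it only relies on occurrences of patterns inside middle subtrees) to reduce the problem to the joinability of critical pairs arising from critical trees with at most $2 \cdot 2 - 1 = 3$ internal nodes. Since the rule has degree $2$, two distinct occurrences of its left member in such a critical tree can overlap in exactly two ways, giving rise to two shapes of critical trees: a \emph{vertical} overlap, consisting of a chain of three internal nodes labeled $a$, $b$, $c$ from top downward, with $(a,b)$ and $(b,c)$ each comparable; and a \emph{sibling} overlap, consisting of an internal node labeled $b$ with two distinct internal children labeled $a$ and $c$, each comparable with $b$.

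In both configurations, Lemma~\ref{lem:forest_poset_max_three_elements} ensures that $a \Min_\Pca b \Min_\Pca c$ is a well-defined element of $\Pca$. The key consequence is that after either of the two possible first rewrites on the critical tree, the intermediate tree still contains a pair of adjacent internal nodes with comparable labels — the newly merged label (one of $a \Min_\Pca b$, $b \Min_\Pca c$) remains comparable with the third element, precisely because of the forest hypothesis — so a second contraction can be applied. A bookkeeping of how the children are concatenated through the two rewriting paths then shows that both converge to the same $\Pca$-Schröder tree: a single internal node labeled $a \Min_\Pca b \Min_\Pca c$ whose children are the original children rearranged in one canonical way. All critical pairs are thus joinable, and the diamond lemma yields confluence. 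The main obstacle is precisely this case-by-case verification that intermediate merged labels remain comparable to the third element; this is exactly the point where the forest hypothesis intervenes, through Lemma~\ref{lem:forest_poset_max_three_elements}.
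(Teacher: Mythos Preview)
Your proposal is correct and follows essentially the same route as the paper: termination via the decrease in the number of internal nodes, normal forms by pattern avoidance, and confluence by invoking Lemma~\ref{lem:confluent_few_nodes} to reduce to the two critical-tree shapes with three internal nodes and then using Lemma~\ref{lem:forest_poset_max_three_elements} to show both rewriting paths collapse to a single corolla labeled $a \Min_\Pca b \Min_\Pca c$. The only cosmetic difference is that the paper draws the two rewriting diagrams explicitly rather than describing the bookkeeping in prose.
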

\begin{proof}
    Assume first that $\Pca$ is only a poset. By definition of
    $\RewS_\Pca'$, if $\Sfr$ and $\Tfr$ are two $\Pca$-Schröder trees
    such that $\Sfr \RewS_\Pca \Tfr$, $\Tfr$ has less internal nodes
    than $\Sfr$. This implies that $\RewS_\Pca'$ is terminating.
    \smallskip

    Since $\RewS_\Pca'$ is terminating, its normal forms are by
    definition the $\Pca$-Schröder trees that cannot be rewritten by
    $\RewS_\Pca'$. These trees are exactly the ones avoiding the
    patterns appearing as left member in~\eqref{equ:rewrite_schroder}.
    Hence, if $\Tfr$ is a normal form of $\RewS_\Pca'$ and $x$ and $y$
    are two internal nodes of $\Tfr$ such that $y$ is a child of $x$,
    the labels of $x$ and $y$ are incomparable in $\Pca$. Therefore,
    $\Tfr$ is a $\Pca$-alternating Schröder tree.
    \smallskip

    Let us now prove that $\RewS_\Pca'$ is confluent when $\Pca$ is
    a forest poset. The $\Pca$-Schröder trees
    \begin{equation} \label{equ:rewrite_rule_schroder_trees_critical_trees}
        \begin{tikzpicture}
            [baseline=(current bounding box.center),xscale=.34,yscale=.24]
            \node[Leaf](0)at(0.00,-2.25){};
            \node[Leaf](2)at(0.50,-4.50){};
            \node[Leaf](4)at(2.00,-6.75){};
            \node[Leaf](6)at(4.00,-6.75){};
            \node[Leaf](7)at(5.50,-4.50){};
            \node[Leaf](8)at(6.00,-2.25){};
            \node[Node](1)at(3.00,0.00){\begin{math}c\end{math}};
            \node[Node](3)at(3.00,-2.25){\begin{math}b\end{math}};
            \node[Node](5)at(3.00,-4.50){\begin{math}a\end{math}};
            \draw[Edge](0)--(1);
            \draw[Edge](2)--(3);
            \draw[Edge](3)--(1);
            \draw[Edge](4)--(5);
            \draw[Edge](5)--(3);
            \draw[Edge](6)--(5);
            \draw[Edge](7)--(3);
            \draw[Edge](8)--(1);
            \node(r)at(3.00,1.5){};
            \draw[Edge](r)--(1);
            \node[right of=0,node distance=.5cm]
                {\footnotesize\begin{math}\dots\end{math}};
            \node[left of=8,node distance=.5cm]
                {\footnotesize\begin{math}\dots\end{math}};
            \node[right of=2,node distance=.4cm]
                {\footnotesize\begin{math}\dots\end{math}};
            \node[left of=7,node distance=.4cm]
                {\footnotesize\begin{math}\dots\end{math}};
            \node[right of=4,node distance=.35cm]
                {\footnotesize\begin{math}\dots\end{math}};
        \end{tikzpicture}\,,\quad
        \begin{tikzpicture}
            [baseline=(current bounding box.center),xscale=.32,yscale=.22]
            \node[Leaf](0)at(0.00,-3.00){};
            \node[Leaf](1)at(1.25,-5.50){};
            \node[Leaf](3)at(3.50,-5.50){};
            \node[Leaf](5)at(4.50,-5.50){};
            \node[Leaf](7)at(6.75,-5.50){};
            \node[Leaf](8)at(8.00,-3.00){};
            \node[Node](2)at(2.50,-3.00){\begin{math}a\end{math}};
            \node[Node](4)at(4.00,0.00){\begin{math}b\end{math}};
            \node[Node](6)at(5.50,-3.00){\begin{math}c\end{math}};
            \draw[Edge](0)--(4);
            \draw[Edge](1)--(2);
            \draw[Edge](2)--(4);
            \draw[Edge](3)--(2);
            \draw[Edge](5)--(6);
            \draw[Edge](6)--(4);
            \draw[Edge](7)--(6);
            \draw[Edge](8)--(4);
            \node(r)at(4.00,1.75){};
            \draw[Edge](r)--(4);
            \node[right of=0,node distance=.35cm]{\footnotesize\begin{math}\dots\end{math}};
            \node[left of=8,node distance=.35cm]{\footnotesize\begin{math}\dots\end{math}};
            \node[right of=0,node distance=1.3cm]{\footnotesize\begin{math}\dots\end{math}};
            \node[right of=1,node distance=.35cm]{\footnotesize\begin{math}\dots\end{math}};
            \node[right of=5,node distance=.35cm]{\footnotesize\begin{math}\dots\end{math}};
        \end{tikzpicture}\,,
    \end{equation}
    are critical trees of $\RewS_\Pca'$ where $a$ and $b$ are comparable
    in $\Pca$, and $b$ and $c$ are comparable in $\Pca$. Moreover, all
    critical trees of $\RewS_\Pca'$ consisting in three or less internal
    nodes are one of the two trees
    of~\eqref{equ:rewrite_rule_schroder_trees_critical_trees}.
    \smallskip

    During the rest of this proof, to gain clarity in our drawings of
    trees, we will represent labels of internal nodes by words
    $a_1 \dots a_k$ of elements of $\Pca$ to specify the label
    $a_1 \Min_\Pca \dots \Min_\Pca a_k$. Because given a tree $\Tfr$
    of~\eqref{equ:rewrite_rule_schroder_trees_critical_trees} there are
    exactly two ways to rewrite $\Tfr$ in one step by $\RewS_\Pca'$,
    each critical tree of $\RewS_\Pca'$ gives rise to one critical pair.
    By denoting
    by~$\raisebox{.2em}{\tikz{\draw[EdgeRew](0,0)--(.4,0);}}$ the
    relation $\RewS_\Pca$, the fact that $\Pca$ is a forest poset and
    Lemma~\ref{lem:forest_poset_max_three_elements} imply that we have
    the two graphs of rewriting by~$\RewS_\Pca'$

    \begin{subequations}
    \begin{minipage}[c]{.45\linewidth}
    \begin{equation}
    \scalebox{.7}{\begin{tikzpicture}
    [baseline=(current bounding box.center),scale=.9]
        \node(A)at(0,0){
            \begin{tikzpicture}
            [baseline=(current bounding box.center),xscale=.35,yscale=.25]
            \begin{scope}
            \node[Leaf](0)at(0.00,-2.25){};
            \node[Leaf](2)at(0.50,-4.50){};
            \node[Leaf](4)at(2.00,-6.75){};
            \node[Leaf](6)at(4.00,-6.75){};
            \node[Leaf](7)at(5.50,-4.50){};
            \node[Leaf](8)at(6.00,-2.25){};
            \node[Node](1)at(3.00,0.00){\begin{math}c\end{math}};
            \node[Node](3)at(3.00,-2.25){\begin{math}b\end{math}};
            \node[Node](5)at(3.00,-4.50){\begin{math}a\end{math}};
            \draw[Edge](0)--(1);
            \draw[Edge](2)--(3);
            \draw[Edge](3)--(1);
            \draw[Edge](4)--(5);
            \draw[Edge](5)--(3);
            \draw[Edge](6)--(5);
            \draw[Edge](7)--(3);
            \draw[Edge](8)--(1);
            \node(r)at(3.00,1.5){};
            \draw[Edge](r)--(1);
            \node[right of=0,node distance=.5cm]
                {\footnotesize\begin{math}\dots\end{math}};
            \node[left of=8,node distance=.5cm]
                {\footnotesize\begin{math}\dots\end{math}};
            \node[right of=2,node distance=.4cm]
                {\footnotesize\begin{math}\dots\end{math}};
            \node[left of=7,node distance=.4cm]
                {\footnotesize\begin{math}\dots\end{math}};
            \node[right of=4,node distance=.35cm]
                {\footnotesize\begin{math}\dots\end{math}};
            \end{scope}\end{tikzpicture}};
        \node(B)at(-3,-2.75){
            \begin{tikzpicture}
            [baseline=(current bounding box.center),xscale=.4,yscale=.28]
            \begin{scope}
                \node[Leaf](0)at(0.00,-2.00){};
                \node[Leaf](2)at(1.00,-4.00){};
                \node[Leaf](4)at(3.00,-4.00){};
                \node[Leaf](5)at(4.00,-2.00){};
                \node[Node](1)at(2.00,0.00){\begin{math}c b\end{math}};
                \node[Node](3)at(2.00,-2.00){\begin{math}a\end{math}};
                \draw[Edge](0)--(1);
                \draw[Edge](2)--(3);
                \draw[Edge](3)--(1);
                \draw[Edge](4)--(3);
                \draw[Edge](5)--(1);
                \node(r)at(2.00,1.50){};
                \draw[Edge](r)--(1);
                \node[right of=0,node distance=.35cm]
                    {\footnotesize\begin{math}\dots\end{math}};
                \node[left of=5,node distance=.35cm]
                    {\footnotesize\begin{math}\dots\end{math}};
                \node[right of=2,node distance=.4cm]
                    {\footnotesize\begin{math}\dots\end{math}};
            \end{scope}\end{tikzpicture}};
        \node(C)at(3,-2.75){
            \begin{tikzpicture}
            [baseline=(current bounding box.center),xscale=.4,yscale=.28]
            \begin{scope}
                \node[Leaf](0)at(0.00,-2.00){};
                \node[Leaf](2)at(1.00,-4.00){};
                \node[Leaf](4)at(3.00,-4.00){};
                \node[Leaf](5)at(4.00,-2.00){};
                \node[Node](1)at(2.00,0.00){\begin{math}c\end{math}};
                \node[Node](3)at(2.00,-2.00){\begin{math}a b\end{math}};
                \draw[Edge](0)--(1);
                \draw[Edge](2)--(3);
                \draw[Edge](3)--(1);
                \draw[Edge](4)--(3);
                \draw[Edge](5)--(1);
                \node(r)at(2.00,1.50){};
                \draw[Edge](r)--(1);
                \node[right of=0,node distance=.32cm]
                    {\footnotesize\begin{math}\dots\end{math}};
                \node[left of=5,node distance=.32cm]
                    {\footnotesize\begin{math}\dots\end{math}};
                \node[right of=2,node distance=.4cm]
                    {\footnotesize\begin{math}\dots\end{math}};
            \end{scope}\end{tikzpicture}};
        \node(D)at(0,-5){
            \begin{tikzpicture}
            [baseline=(current bounding box.center),xscale=.32,yscale=.36]
            \begin{scope}
                \node[Leaf](0)at(0.00,-1.50){};
                \node[Leaf](2)at(2.00,-1.50){};
                \node[Node](1)at(1.00,0.00){\begin{math}a b c\end{math}};
                \draw[Edge](0)--(1);
                \draw[Edge](2)--(1);
                \node(r)at(1.00,1.5){};
                \draw[Edge](r)--(1);
                \node[right of=0,node distance=.34cm]
                    {\footnotesize\begin{math}\dots\end{math}};
            \end{scope}\end{tikzpicture}};
        \draw[EdgeRew](A)--(B);
        \draw[EdgeRew](A)--(C);
        \draw[EdgeRew](B)--(D);
        \draw[EdgeRew](C)--(D);
    \end{tikzpicture}}\,,
    \end{equation}
    \end{minipage}
    \qquad
    \begin{minipage}[c]{.45\linewidth}
    \begin{equation}
    \scalebox{.7}{\begin{tikzpicture}
    [baseline=(current bounding box.center),scale=.9]
        \node(A)at(0,0){
            \begin{tikzpicture}
            [baseline=(current bounding box.center),xscale=.32,yscale=.22]
            \begin{scope}
                \node[Leaf](0)at(0.00,-3.00){};
                \node[Leaf](1)at(1.25,-5.50){};
                \node[Leaf](3)at(3.50,-5.50){};
                \node[Leaf](5)at(4.50,-5.50){};
                \node[Leaf](7)at(6.75,-5.50){};
                \node[Leaf](8)at(8.00,-3.00){};
                \node[Node](2)at(2.50,-3.00){\begin{math}a\end{math}};
                \node[Node](4)at(4.00,0.00){\begin{math}b\end{math}};
                \node[Node](6)at(5.50,-3.00){\begin{math}c\end{math}};
                \draw[Edge](0)--(4);
                \draw[Edge](1)--(2);
                \draw[Edge](2)--(4);
                \draw[Edge](3)--(2);
                \draw[Edge](5)--(6);
                \draw[Edge](6)--(4);
                \draw[Edge](7)--(6);
                \draw[Edge](8)--(4);
                \node(r)at(4.00,1.75){};
                \draw[Edge](r)--(4);
                \node[right of=0,node distance=.35cm]
                    {\footnotesize\begin{math}\dots\end{math}};
                \node[left of=8,node distance=.35cm]
                    {\footnotesize\begin{math}\dots\end{math}};
                \node[right of=0,node distance=1.3cm]
                    {\footnotesize\begin{math}\dots\end{math}};
                \node[right of=1,node distance=.35cm]
                    {\footnotesize\begin{math}\dots\end{math}};
                \node[right of=5,node distance=.35cm]
                    {\footnotesize\begin{math}\dots\end{math}};
            \end{scope}\end{tikzpicture}};
        \node(B)at(-3,-2.75){
            \begin{tikzpicture}
            [baseline=(current bounding box.center),xscale=.4,yscale=.28]
            \begin{scope}
                \node[Leaf](0)at(0.00,-2.00){};
                \node[Leaf](2)at(1.00,-4.00){};
                \node[Leaf](4)at(3.00,-4.00){};
                \node[Leaf](5)at(4.00,-2.00){};
                \node[Node](1)at(2.00,0.00){\begin{math}a b\end{math}};
                \node[Node](3)at(2.00,-2.00){\begin{math}c\end{math}};
                \draw[Edge](0)--(1);
                \draw[Edge](2)--(3);
                \draw[Edge](3)--(1);
                \draw[Edge](4)--(3);
                \draw[Edge](5)--(1);
                \node(r)at(2.00,1.50){};
                \draw[Edge](r)--(1);
                \node[right of=0,node distance=.35cm]
                    {\footnotesize\begin{math}\dots\end{math}};
                \node[left of=5,node distance=.35cm]
                    {\footnotesize\begin{math}\dots\end{math}};
                \node[right of=2,node distance=.4cm]
                    {\footnotesize\begin{math}\dots\end{math}};
            \end{scope}\end{tikzpicture}};
        \node(C)at(3,-2.75){
            \begin{tikzpicture}
            [baseline=(current bounding box.center),xscale=.4,yscale=.28]
            \begin{scope}
                \node[Leaf](0)at(0.00,-2.00){};
                \node[Leaf](2)at(1.00,-4.00){};
                \node[Leaf](4)at(3.00,-4.00){};
                \node[Leaf](5)at(4.00,-2.00){};
                \node[Node](1)at(2.00,0.00){\begin{math}b c\end{math}};
                \node[Node](3)at(2.00,-2.00){\begin{math}a\end{math}};
                \draw[Edge](0)--(1);
                \draw[Edge](2)--(3);
                \draw[Edge](3)--(1);
                \draw[Edge](4)--(3);
                \draw[Edge](5)--(1);
                \node(r)at(2.00,1.50){};
                \draw[Edge](r)--(1);
                \node[right of=0,node distance=.32cm]
                    {\footnotesize\begin{math}\dots\end{math}};
                \node[left of=5,node distance=.32cm]
                    {\footnotesize\begin{math}\dots\end{math}};
                \node[right of=2,node distance=.4cm]
                    {\footnotesize\begin{math}\dots\end{math}};
            \end{scope}\end{tikzpicture}};
        \node(D)at(0,-5){
            \begin{tikzpicture}
            [baseline=(current bounding box.center),xscale=.32,yscale=.36]
            \begin{scope}
                \node[Leaf](0)at(0.00,-1.50){};
                \node[Leaf](2)at(2.00,-1.50){};
                \node[Node](1)at(1.00,0.00){\begin{math}a b c\end{math}};
                \draw[Edge](0)--(1);
                \draw[Edge](2)--(1);
                \node(r)at(1.00,1.5){};
                \draw[Edge](r)--(1);
                \node[right of=0,node distance=.34cm]
                    {\footnotesize\begin{math}\dots\end{math}};
            \end{scope}\end{tikzpicture}};
        \draw[EdgeRew](A)--(B);
        \draw[EdgeRew](A)--(C);
        \draw[EdgeRew](B)--(D);
        \draw[EdgeRew](C)--(D);
    \end{tikzpicture}}\,.
    \end{equation}
    \end{minipage}
    \end{subequations}

    Therefore, this shows that all critical pairs of $\RewS_\Pca'$ of
    trees consisting in three internal nodes of less are joinable.
    Since $\RewS_\Pca'$ is of degree $2$ and is terminating, by
    Lemma~\ref{lem:confluent_few_nodes}, this implies that $\RewS_\Pca'$
    is confluent.
\end{proof}
\medskip

In Lemma~\ref{lem:rewrite_rule_schroder_trees}, the condition on $\Pca$
to be a forest poset is a necessary condition for the confluence of
$\RewS_\Pca'$. Indeed, by setting
\begin{equation}
    \Pca :=
    \begin{tikzpicture}
        [baseline=(current bounding box.center),xscale=.4,yscale=.4]
        \node[Vertex](1)at(-.75,0){\begin{math}1\end{math}};
        \node[Vertex](2)at(.75,0){\begin{math}2\end{math}};
        \node[Vertex](3)at(0,-1){\begin{math}3\end{math}};
        \draw[Edge](1)--(3);
        \draw[Edge](2)--(3);
    \end{tikzpicture}\,,
\end{equation}
the critical tree
\begin{equation}
    \begin{tikzpicture}
    [baseline=(current bounding box.center),xscale=.25,yscale=.2]
        \node[Leaf](0)at(0.00,-1.75){};
        \node[Leaf](2)at(2.00,-5.25){};
        \node[Leaf](4)at(4.00,-5.25){};
        \node[Leaf](6)at(6.00,-3.50){};
        \node[Node](1)at(1.00,0.00){\begin{math}1\end{math}};
        \node[Node](3)at(3.00,-3.50){\begin{math}2\end{math}};
        \node[Node](5)at(5.00,-1.75){\begin{math}3\end{math}};
        \draw[Edge](0)--(1);
        \draw[Edge](2)--(3);
        \draw[Edge](3)--(5);
        \draw[Edge](4)--(3);
        \draw[Edge](5)--(1);
        \draw[Edge](6)--(5);
        \node(r)at(1.00,1.75){};
        \draw[Edge](r)--(1);
    \end{tikzpicture}
\end{equation}
of $\RewS_\Pca'$ admits the critical pair consisting in the two trees
\begin{equation}
    \begin{tikzpicture}
    [baseline=(current bounding box.center),xscale=.25,yscale=.25]
        \node[Leaf](0)at(0.00,-2.00){};
        \node[Leaf](2)at(1.00,-4.00){};
        \node[Leaf](4)at(3.00,-4.00){};
        \node[Leaf](5)at(4.00,-2.00){};
        \node[Node](1)at(2.00,0.00){\begin{math}1\end{math}};
        \node[Node](3)at(2.00,-2.00){\begin{math}2\end{math}};
        \draw[Edge](0)--(1);
        \draw[Edge](2)--(3);
        \draw[Edge](3)--(1);
        \draw[Edge](4)--(3);
        \draw[Edge](5)--(1);
        \node(r)at(2.00,1.50){};
        \draw[Edge](r)--(1);
    \end{tikzpicture}\,,\quad
    \begin{tikzpicture}
    [baseline=(current bounding box.center),xscale=.25,yscale=.25]
        \node[Leaf](0)at(0.00,-2.00){};
        \node[Leaf](2)at(2.00,-4.00){};
        \node[Leaf](4)at(3.00,-4.00){};
        \node[Leaf](5)at(4.00,-4.00){};
        \node[Node](1)at(1.00,0.00){\begin{math}1\end{math}};
        \node[Node](3)at(3.00,-2.00){\begin{math}2\end{math}};
        \draw[Edge](0)--(1);
        \draw[Edge](2)--(3);
        \draw[Edge](3)--(1);
        \draw[Edge](4)--(3);
        \draw[Edge](5)--(3);
        \node(r)at(1.00,1.50){};
        \draw[Edge](r)--(1);
    \end{tikzpicture}\,.
\end{equation}
Since these two trees are normal forms of $\RewS_\Pca'$, this critical
pair is not joinable and hence, $\RewS_\Pca'$ is not confluent.
\medskip

We define the partial composition $\Sfr \circ_i \Tfr$ of two
$\Pca$-alternating Schröder trees $\Sfr$ and $\Tfr$ as the
$\Pca$-alternating Schröder tree being the normal form by $\RewS_\Pca'$
of the $\Pca$-Schröder tree obtained by grafting the root $\Tfr$ on the
$i$th leaf of $\Sfr$. We denote by $\ASchr(\Pca)$ the linear span of the
set of the $\Pca$-alternating Schröder trees endowed with the partial
composition described above and extended by linearity. Consider for
instance the forest poset
\begin{equation} \label{equ:example_forest_poset}
    \Pca :=
    \begin{tikzpicture}
        [baseline=(current bounding box.center),xscale=.45,yscale=.42]
        \node[Vertex](1)at(0,0){\begin{math}1\end{math}};
        \node[Vertex](2)at(-.75,-1){\begin{math}2\end{math}};
        \node[Vertex](3)at(.75,-1){\begin{math}3\end{math}};
        \node[Vertex](4)at(2,0){\begin{math}4\end{math}};
        \node[Vertex](5)at(2,-1){\begin{math}5\end{math}};
        \node[Vertex](6)at(2,-2){\begin{math}6\end{math}};
        \draw[Edge](1)--(2);
        \draw[Edge](1)--(3);
        \draw[Edge](4)--(5);
        \draw[Edge](5)--(6);
    \end{tikzpicture}\,.
\end{equation}
Then, we have in $\ASchr(\Pca)$ the partial composition
\begin{equation}
    \begin{tikzpicture}
    [baseline=(current bounding box.center),xscale=.25,yscale=.22]
        \node[Leaf](0)at(0.00,-3.33){};
        \node[Leaf](2)at(2.00,-3.33){};
        \node[Leaf](4)at(4.00,-1.67){};
        \node[Node](1)at(1.00,-1.67){\begin{math}4\end{math}};
        \node[Node](3)at(3.00,0.00){\begin{math}6\end{math}};
        \draw[Edge](0)--(1);
        \draw[Edge](1)--(3);
        \draw[Edge](2)--(1);
        \draw[Edge](4)--(3);
        \node(r)at(3.00,1.75){};
        \draw[Edge](r)--(3);
    \end{tikzpicture}
    \enspace \circ_3 \enspace
    \begin{tikzpicture}
        [baseline=(current bounding box.center),xscale=.25,yscale=.22]
        \node[Leaf](0)at(0.00,-1.67){};
        \node[Leaf](2)at(2.00,-3.33){};
        \node[Leaf](4)at(4.00,-3.33){};
        \node[Node,Mark2](1)at(1.00,0.00){\begin{math}2\end{math}};
        \node[Node,Mark2](3)at(3.00,-1.67){\begin{math}6\end{math}};
        \draw[Edge](0)--(1);
        \draw[Edge](2)--(3);
        \draw[Edge](3)--(1);
        \draw[Edge](4)--(3);
        \node(r)at(1.00,1.75){};
        \draw[Edge](r)--(1);
    \end{tikzpicture}
    \enspace = \enspace
    \begin{tikzpicture}
    [baseline=(current bounding box.center),xscale=.25,yscale=.18]
        \node[Leaf](0)at(0.00,-4.50){};
        \node[Leaf](2)at(2.00,-4.50){};
        \node[Leaf](4)at(4.00,-4.50){};
        \node[Leaf](6)at(6.00,-6.75){};
        \node[Leaf](8)at(8.00,-6.75){};
        \node[Node](1)at(1.00,-2.25){\begin{math}4\end{math}};
        \node[Node](3)at(3.00,0.00){\begin{math}6\end{math}};
        \node[Node,Mark2](5)at(5.00,-2.25){\begin{math}2\end{math}};
        \node[Node,Mark2](7)at(7.00,-4.50){\begin{math}6\end{math}};
        \draw[Edge](0)--(1);
        \draw[Edge](1)--(3);
        \draw[Edge](2)--(1);
        \draw[Edge](4)--(5);
        \draw[Edge](5)--(3);
        \draw[Edge](6)--(7);
        \draw[Edge](7)--(5);
        \draw[Edge](8)--(7);
        \node(r)at(3.00,2){};
        \draw[Edge](r)--(3);
    \end{tikzpicture}\,,
\end{equation}
and also
\begin{equation} \label{equ:example_schroder_partial_composition}
    \begin{tikzpicture}
        [baseline=(current bounding box.center),xscale=.25,yscale=.22]
        \node[Leaf](0)at(0.00,-1.67){};
        \node[Leaf](2)at(2.00,-3.33){};
        \node[Leaf](4)at(4.00,-3.33){};
        \node[Node](1)at(1.00,0.00){\begin{math}1\end{math}};
        \node[Node](3)at(3.00,-1.67){\begin{math}4\end{math}};
        \draw[Edge](0)--(1);
        \draw[Edge](2)--(3);
        \draw[Edge](3)--(1);
        \draw[Edge](4)--(3);
        \node(r)at(1.00,1.75){};
        \draw[Edge](r)--(1);
    \end{tikzpicture}
    \enspace \circ_1 \enspace
    \begin{tikzpicture}
        [baseline=(current bounding box.center),xscale=.25,yscale=.18]
        \node[Leaf](0)at(0.00,-4.67){};
        \node[Leaf](2)at(2.00,-4.67){};
        \node[Leaf](4)at(4.00,-4.67){};
        \node[Leaf](6)at(6.00,-4.67){};
        \node[Node,Mark2](1)at(1.00,-2.33){\begin{math}3\end{math}};
        \node[Node,Mark2](3)at(3.00,0.00){\begin{math}2\end{math}};
        \node[Node,Mark2](5)at(5.00,-2.33){\begin{math}3\end{math}};
        \draw[Edge](0)--(1);
        \draw[Edge](1)--(3);
        \draw[Edge](2)--(1);
        \draw[Edge](4)--(5);
        \draw[Edge](5)--(3);
        \draw[Edge](6)--(5);
        \node(r)at(3.00,2){};
        \draw[Edge](r)--(3);
    \end{tikzpicture}
    \enspace = \enspace
    \begin{tikzpicture}
    [baseline=(current bounding box.center),xscale=.25,yscale=.18]
        \node[Leaf](0)at(0.00,-2.67){};
        \node[Leaf](1)at(1.00,-2.67){};
        \node[Leaf](3)at(2.00,-2.67){};
        \node[Leaf](4)at(3.00,-2.67){};
        \node[Leaf](5)at(4.00,-5.33){};
        \node[Leaf](7)at(6.00,-5.33){};
        \node[Node](2)at(2.00,0.00){\begin{math}1\end{math}};
        \node[Node](6)at(5.00,-2.67){\begin{math}4\end{math}};
        \draw[Edge](0)--(2);
        \draw[Edge](1)--(2);
        \draw[Edge](3)--(2);
        \draw[Edge](4)--(2);
        \draw[Edge](5)--(6);
        \draw[Edge](6)--(2);
        \draw[Edge](7)--(6);
        \node(r)at(2.00,2.00){};
        \draw[Edge](r)--(2);
    \end{tikzpicture}\,,
\end{equation}
since
\begin{equation} \label{equ:example_schroder_rewriting_steps}
    \begin{tikzpicture}
    [baseline=(current bounding box.center),xscale=.25,yscale=.15]
        \node[Leaf](0)at(0.00,-8.25){};
        \node[Leaf](10)at(10.00,-5.50){};
        \node[Leaf](2)at(2.00,-8.25){};
        \node[Leaf](4)at(4.00,-8.25){};
        \node[Leaf](6)at(6.00,-8.25){};
        \node[Leaf](8)at(8.00,-5.50){};
        \node[Node](1)at(1.00,-5.50){\begin{math}3\end{math}};
        \node[Node](3)at(3.00,-2.75){\begin{math}2\end{math}};
        \node[Node](5)at(5.00,-5.50){\begin{math}3\end{math}};
        \node[Node](7)at(7.00,0.00){\begin{math}1\end{math}};
        \node[Node](9)at(9.00,-2.75){\begin{math}4\end{math}};
        \draw[Edge](0)--(1);
        \draw[Edge](1)--(3);
        \draw[Edge](10)--(9);
        \draw[Edge](2)--(1);
        \draw[Edge](3)--(7);
        \draw[Edge](4)--(5);
        \draw[Edge](5)--(3);
        \draw[Edge](6)--(5);
        \draw[Edge](8)--(9);
        \draw[Edge](9)--(7);
        \node(r)at(7.00,2.25){};
        \draw[Edge](r)--(7);
    \end{tikzpicture}
    \enspace \RewS_\Pca \enspace
    \begin{tikzpicture}
    [baseline=(current bounding box.center),xscale=.25,yscale=.14]
        \node[Leaf](0)at(0.00,-6.67){};
        \node[Leaf](2)at(2.00,-6.67){};
        \node[Leaf](4)at(3.00,-6.67){};
        \node[Leaf](6)at(5.00,-6.67){};
        \node[Leaf](7)at(6.00,-6.67){};
        \node[Leaf](9)at(8.00,-6.67){};
        \node[Node](1)at(1.00,-3.33){\begin{math}3\end{math}};
        \node[Node](3)at(4.00,0.00){\begin{math}1\end{math}};
        \node[Node](5)at(4.00,-3.33){\begin{math}3\end{math}};
        \node[Node](8)at(7.00,-3.33){\begin{math}4\end{math}};
        \draw[Edge](0)--(1);
        \draw[Edge](1)--(3);
        \draw[Edge](2)--(1);
        \draw[Edge](4)--(5);
        \draw[Edge](5)--(3);
        \draw[Edge](6)--(5);
        \draw[Edge](7)--(8);
        \draw[Edge](8)--(3);
        \draw[Edge](9)--(8);
        \node(r)at(4.00,2.50){};
        \draw[Edge](r)--(3);
    \end{tikzpicture}
    \enspace \RewS_\Pca \enspace
    \begin{tikzpicture}
    [baseline=(current bounding box.center),xscale=.25,yscale=.14]
        \node[Leaf](0)at(0.00,-6.00){};
        \node[Leaf](2)at(2.00,-6.00){};
        \node[Leaf](3)at(3.00,-3.00){};
        \node[Leaf](5)at(5.00,-3.00){};
        \node[Leaf](6)at(6.00,-6.00){};
        \node[Leaf](8)at(8.00,-6.00){};
        \node[Node](1)at(1.00,-3.00){\begin{math}3\end{math}};
        \node[Node](4)at(4.00,0.00){\begin{math}1\end{math}};
        \node[Node](7)at(7.00,-3.00){\begin{math}4\end{math}};
        \draw[Edge](0)--(1);
        \draw[Edge](1)--(4);
        \draw[Edge](2)--(1);
        \draw[Edge](3)--(4);
        \draw[Edge](5)--(4);
        \draw[Edge](6)--(7);
        \draw[Edge](7)--(4);
        \draw[Edge](8)--(7);
        \node(r)at(4.00,2.5){};
        \draw[Edge](r)--(4);
    \end{tikzpicture}
    \enspace \RewS_\Pca \enspace
    \begin{tikzpicture}
    [baseline=(current bounding box.center),xscale=.25,yscale=.16]
        \node[Leaf](0)at(0.00,-2.67){};
        \node[Leaf](1)at(1.00,-2.67){};
        \node[Leaf](3)at(2.00,-2.67){};
        \node[Leaf](4)at(3.00,-2.67){};
        \node[Leaf](5)at(4.00,-5.33){};
        \node[Leaf](7)at(6.00,-5.33){};
        \node[Node](2)at(2.00,0.00){\begin{math}1\end{math}};
        \node[Node](6)at(5.00,-2.67){\begin{math}4\end{math}};
        \draw[Edge](0)--(2);
        \draw[Edge](1)--(2);
        \draw[Edge](3)--(2);
        \draw[Edge](4)--(2);
        \draw[Edge](5)--(6);
        \draw[Edge](6)--(2);
        \draw[Edge](7)--(6);
        \node(r)at(2.00,2.25){};
        \draw[Edge](r)--(2);
    \end{tikzpicture}
\end{equation}
is a sequence of rewritings steps by $\Rew_\Pca'$, where the leftmost
tree of~\eqref{equ:example_schroder_rewriting_steps} is obtained by
grafting the root of the second tree
of~\eqref{equ:example_schroder_partial_composition} onto the first leaf
of the first tree of~\eqref{equ:example_schroder_partial_composition}.
\medskip

\begin{Proposition} \label{prop:operad_aschr}
    Let $\Pca$ be a forest poset. Then, $\ASchr(\Pca)$ is an operad
    graded by the number of the leaves of the trees. Moreover, as an
    operad, $\ASchr(\Pca)$ is generated by the set of $\Pca$-corollas of
    arity two.
\end{Proposition}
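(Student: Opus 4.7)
The plan is to prove that $\ASchr(\Pca)$ is an operad by establishing that it is isomorphic to $\As(\Pca)$ via a linear extension of the bijection $\ASchrMap_\Pca$ of Lemma~\ref{lem:bijection_pbw_basis_aschr}, and then derive the generation statement from the known presentation of $\As(\Pca)$.

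First, I would observe that since $\Pca$ is a forest poset, Lemma~\ref{lem:rewrite_rule_schroder_trees} guarantees that $\RewS_\Pca'$ is convergent. Hence every $\Pca$-Schröder tree reduces to a unique normal form lying in $\SetASchr(\Pca)$, so the partial composition $\Sfr \circ_i \Tfr$ on $\ASchr(\Pca)$ is well-defined. The number-of-leaves grading is immediate because grafting adds leaves additively (minus one for the shared leaf) and $\RewS_\Pca'$ preserves the number of leaves, as seen from~\eqref{equ:rewrite_schroder}.

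Next, using Theorem~\ref{thm:koszulity} and Lemma~\ref{lem:bijection_pbw_basis_aschr}, I would extend $\ASchrMap_\Pca$ linearly to a vector space isomorphism $\ASchrMap_\Pca : \As(\Pca) \to \ASchr(\Pca)$. The key technical step is to verify that
\begin{equation*}
    \ASchrMap_\Pca(x \circ_i y) = \ASchrMap_\Pca(x) \circ_i \ASchrMap_\Pca(y)
\end{equation*}
for all $x, y$ and valid $i$. It suffices to check this when $x$ and $y$ are trees of $\NormalF(\Pca)$. The map $\ASchrMap_\Pca^{-1}$, which sends a $\Pca$-alternating Schröder tree to a syntax tree, extends to all $\Pca$-Schröder trees by replacing each internal node $\Corolla_a^k$ by a right comb of $k - 1$ binary nodes all labeled $\Op_a$. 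Under this correspondence, an elementary $\RewS_\Pca'$-step collapsing a parent-child pair with comparable labels into a single higher-arity corolla is mirrored by a short sequence of $\Rew_\Pca'$-rewritings on the associated syntax tree that transforms the corresponding binary subtree into a right comb of identical labels. Since both $\Rew_\Pca'$ (Lemmas~\ref{lem:terminating_rewrite_rule} and~\ref{lem:confluent_rewrite_rule}) and $\RewS_\Pca'$ (Lemma~\ref{lem:rewrite_rule_schroder_trees}) are convergent, one deduces that grafting in $\ASchr(\Pca)$ followed by $\RewS_\Pca'$-normalization matches, under $\ASchrMap_\Pca$, grafting in $\Free(\FreeGen_\Pca^\Op)$ followed by $\Rew_\Pca'$-normalization. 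Therefore $\ASchrMap_\Pca$ becomes an operad isomorphism, and the operad axioms~\eqref{equ:operad_axiom_1}--\eqref{equ:operad_axiom_3} for $\ASchr(\Pca)$ follow from those of $\As(\Pca)$.

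For the generation statement, $\As(\Pca)$ is generated, by construction of its presentation in Section~\ref{subsubsec:definition_as}, by the operations $\Op_a$ for $a \in \Pca$. The recursive definition~\eqref{equ:recursive_description_aschr_map} gives $\ASchrMap_\Pca(\Op_a) = \Corolla_a^2$, so $\ASchr(\Pca)$ is generated as an operad by the binary $\Pca$-corollas. The principal obstacle in this proof is formalizing the compatibility between the two rewrite systems $\Rew_\Pca'$ and $\RewS_\Pca'$; this requires a careful inductive argument tracking how a single higher-arity Schröder rewriting step corresponds to a sequence of binary syntax-tree rewritings, and exploiting convergence to conclude that the two normal forms match after the bijection.
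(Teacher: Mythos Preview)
Your approach is correct, but it takes a genuinely different route from the paper's.

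The paper proves the operad axioms~\eqref{equ:operad_axiom_1}--\eqref{equ:operad_axiom_3} \emph{directly} from the convergence of $\RewS_\Pca'$ (Lemma~\ref{lem:rewrite_rule_schroder_trees}): since rewriting is local, grafting commutes with $\RewS_\Pca'$-reduction, and hence the normal form of an iterated grafting does not depend on the order in which the graftings and normalizations are performed. This is the standard argument that a quotient of a free operad by a convergent rewrite system is again an operad. For the generation statement, the paper argues by a short structural induction on the recursive form~\eqref{equ:description_aschr} of $\Pca$-alternating Schröder trees, showing that every such tree decomposes as a composition of binary $\Pca$-corollas.

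Your route instead bundles Proposition~\ref{prop:operad_aschr} together with the realization Theorem~\ref{thm:realization}: you first establish that $\ASchrMap_\Pca$ intertwines the two partial compositions by matching $\RewS_\Pca'$-steps with sequences of $\Rew_\Pca'$-steps, and then transport both the operad axioms and the generation property from $\As(\Pca)$. This is sound, and it has the advantage of yielding the isomorphism $\As(\Pca) \simeq \ASchr(\Pca)$ as a byproduct; but it is heavier, since the compatibility of the two rewrite systems under $\ASchrMap_\Pca$ is exactly the nontrivial verification you flag at the end. The paper avoids this work at this stage and defers the isomorphism to Theorem~\ref{thm:realization}, which is proved there by a dimension-count together with a check that the binary corollas satisfy Relations~\eqref{equ:relation_1}--\eqref{equ:relation_2}, rather than by directly comparing the two rewrite systems.
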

\begin{proof}
    The fact that $\ASchr(\Pca)$ is an operad, that is it satisfies
    Relations~\eqref{equ:operad_axiom_1}, \eqref{equ:operad_axiom_2},
    and~\eqref{equ:operad_axiom_3}, is a direct consequence of the fact
    that, by Lemma~\ref{lem:rewrite_rule_schroder_trees}, the rewrite
    rule $\RewS_\Pca'$ intervening in the computation of the partial
    compositions of two $\Pca$-alternating Schröder trees is convergent.
    This operad is graded by the number of the leaves of the trees by
    definition of its partial composition.
    \smallskip

    Finally, a straightforward structural induction on $\Pca$-alternating
    Schröder trees, relying on their recursive general form provided
    by~\eqref{equ:description_aschr}, shows that any $\Pca$-alternating
    Schröder tree can be expressed by partial compositions involving
    only $\Pca$-corollas of arity two. Whence the second part of the
    statement of the proposition.
\end{proof}
\medskip

\begin{Theorem} \label{thm:realization}
    Let $\Pca$ be a forest poset. Then, the operads $\As(\Pca)$ and
    $\ASchr(\Pca)$ are isomorphic.
\end{Theorem}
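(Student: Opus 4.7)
The plan is to construct an operad morphism $\phi : \As(\Pca) \to \ASchr(\Pca)$ sending each generator $\Op_a$ to the $\Pca$-corolla $\Corolla_a^2$ of arity $2$, and then show that it is bijective. By the presentation of $\As(\Pca)$ as a quotient of $\Free(\FreeGen_\Pca^\Op)$, this amounts to defining a morphism $\tilde \phi : \Free(\FreeGen_\Pca^\Op) \to \ASchr(\Pca)$ with $\tilde \phi(\Op_a) = \Corolla_a^2$ for all $a \in \Pca$, which exists and is unique by the universal property of the free operad (Proposition~\ref{prop:operad_aschr} guarantees that the target is a genuine operad), and then verifying that $\tilde\phi$ vanishes on the ideal $\langle \FreeRel_\Pca^\Op \rangle$.

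First I would check this vanishing on each family of defining relations. For a relation of type~\eqref{equ:relation_1} with $a$ and $b$ comparable in $\Pca$, the image $\tilde\phi(\Op_a \circ_1 \Op_b) = \Corolla_a^2 \circ_1 \Corolla_b^2$ is by definition the $\RewS_\Pca'$-normal form of the $\Pca$-Schröder tree with root labeled $a$ and first child an internal node labeled $b$; one application of~\eqref{equ:rewrite_schroder} reduces it to $\Corolla_{a \Min_\Pca b}^3$. Likewise, $\tilde\phi(\Op_{a \Min_\Pca b} \circ_2 \Op_{a \Min_\Pca b}) = \Corolla_{a \Min_\Pca b}^2 \circ_2 \Corolla_{a \Min_\Pca b}^2$ reduces to $\Corolla_{a \Min_\Pca b}^3$, since two adjacent internal nodes with the same label are trivially comparable and~\eqref{equ:rewrite_schroder} applies. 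A symmetric argument handles relations of type~\eqref{equ:relation_2}, and therefore $\tilde\phi$ factors through a well-defined operad morphism $\phi : \As(\Pca) \to \ASchr(\Pca)$.

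To conclude, I would combine a generation argument with a dimension count. Surjectivity of $\phi$ is immediate from Proposition~\ref{prop:operad_aschr}, since $\ASchr(\Pca)$ is generated as an operad by the $\Pca$-corollas of arity $2$, all of which lie in the image of $\phi$. For injectivity, Theorem~\ref{thm:koszulity} supplies the Poincaré-Birkhoff-Witt basis $\NormalF(\Pca)$ of $\As(\Pca)$, so $\dim \As(\Pca)(n) = \# \NormalF(\Pca)(n)$, and Lemma~\ref{lem:bijection_pbw_basis_aschr} gives $\# \NormalF(\Pca)(n) = \# \SetASchr(\Pca)(n) = \dim \ASchr(\Pca)(n)$. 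As these arity-wise dimensions are finite and equal, surjectivity forces $\phi$ to be an isomorphism.

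The main obstacle is the well-definedness verification, whose whole content is that~\eqref{equ:rewrite_schroder} applies even when the two adjacent labels coincide, so that both sides of each relation share a common normal form. A more structural route to the same conclusion, which I regard as optional, would be to prove by induction on the recursive form~\eqref{equ:normal_forms} that $\phi(\Tfr) = \ASchrMap_\Pca(\Tfr)$ for every $\Tfr \in \NormalF(\Pca)$: the right comb of internal nodes all labeled $\Op_a$ collapses under $\RewS_\Pca'$ to a single $\Pca$-corolla labeled by $a$, and the roots of the subtrees $\Sfr_i$ carry labels incomparable with $a$, so no further rewriting is triggered. This would render the isomorphism explicit at the level of the two distinguished bases.
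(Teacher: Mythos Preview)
Your proof is correct and follows essentially the same approach as the paper: both verify by direct computation in $\ASchr(\Pca)$ that the $\Pca$-corollas $\Corolla_a^2$ satisfy the defining relations~\eqref{equ:relation_1} and~\eqref{equ:relation_2} of $\As(\Pca)$, and both conclude via the dimension equality coming from Theorem~\ref{thm:koszulity} and Lemma~\ref{lem:bijection_pbw_basis_aschr}. The only cosmetic difference is that you package the argument as constructing an explicit surjective morphism $\phi$ and deducing injectivity from equal finite dimensions, whereas the paper phrases it as showing that $\As(\Pca)$ and $\ASchr(\Pca)$ admit the same presentation; the underlying content is identical.
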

\begin{proof}
    First, by Proposition~\ref{prop:operad_aschr}, $\ASchr(\Pca)$ is an
    operad wherein for any $n \geq 1$, its graded component of arity $n$
    has bases indexed by $\Pca$-alternating Schröder trees with $n$
    leaves. By Lemma~\ref{lem:bijection_pbw_basis_aschr}, these trees
    are in bijection with the elements of the Poincaré-Birkhoff-Witt
    basis $\NormalF(\Pca)$ of $\As(\Pca)$ provided by
    Theorem~\ref{thm:koszulity}. By~\cite{Hof10}, this shows that
    $\ASchr(\Pca)$ and $\As(\Pca)$ are isomorphic as graded vector spaces.
    \smallskip

    The generators of $\ASchr(\Pca)$, that are by
    Proposition~\ref{prop:operad_aschr} $\Pca$-corollas of arity two,
    satisfy at least the nontrivial relations
    \begin{subequations}
    \begin{equation} \label{equ:realization_relation_1}
        \Corolla_a^2 \circ_1 \Corolla_b^2
        -
        \Corolla_{a \Min_\Pca b}^2 \circ_2 \Corolla_{a \Min_\Pca b}^2 = 0,
        \qquad a, b \in \Pca \mbox{ and }
        (a \Ord_\Pca b \mbox{ or } b \Ord_\Pca a),
    \end{equation}
    \begin{equation} \label{equ:realization_relation_2}
        \Corolla_{a \Min_\Pca b}^2 \circ_1 \Corolla_{a \Min_\Pca b}^2
        -
        \Corolla_a^2 \circ_2 \Corolla_b^2 = 0,
        \qquad a, b \in \Pca \mbox{ and }
        (a \Ord_\Pca b \mbox{ or } b \Ord_\Pca a),
    \end{equation}
    \end{subequations}
    obtained by a direct computation in $\ASchr(\Pca)$. By using the
    same reasoning as the one used to establish
    Proposition~\ref{prop:dimension_relations}, we obtain that there are
    as many elements of the form~\eqref{equ:realization_relation_1}
    or~\eqref{equ:realization_relation_2} as generating relations for
    the space of relations  $\FreeRel_\Pca^\Op$ of $\As(\Pca)$
    (see~\eqref{equ:relation_1} and~\eqref{equ:relation_2}). Therefore,
    as $\ASchr(\Pca)$ and $\As(\Pca)$ are isomorphic as graded vector
    spaces, it cannot be more nontrivial relations in $\ASchr(\Pca)$
    than Relations~\eqref{equ:realization_relation_1}
    and~\eqref{equ:realization_relation_2}.
    \smallskip

    Finally, by identifying all symbols $\Corolla_a^2$, $a \in \Pca$,
    with $\Op_a$, we observe that $\As(\Pca)$ and $\ASchr(\Pca)$ admit
    the same presentation. This implies that $\As(\Pca)$ and
    $\ASchr(\Pca)$ are isomorphic operads.
\end{proof}
\medskip

As announced, Theorem~\ref{thm:realization} provides a combinatorial
realization $\ASchr(\Pca)$ of $\As(\Pca)$ when $\Pca$ is a forest poset.
\medskip

\subsubsection{Free forest poset associative algebras over one generator}%
\label{subsubsec:free_as_forest_poset_algebras}
The realization of $\As(\Pca)$, when $\Pca$ is a forest poset, provided
by Theorem~\ref{thm:realization} in terms of $\Pca$-alternating Schröder
trees leads to the following description. The free $\Pca$-associative
algebra over one generator, where $\Pca$ is a forest poset, has
$\ASchr(\Pca)$ as underlying vector space and is endowed with linear
operations
\begin{equation}
    \Op_a : \ASchr(\Pca) \otimes \ASchr(\Pca) \to \ASchr(\Pca),
    \qquad a \in \Pca,
\end{equation}
satisfying for all $\Pca$-alternating Schröder trees $\Sfr$ and $\Tfr$,
\begin{equation}
    \Sfr \Op_a \Tfr
    = \left(\Corolla_a^2 \circ_2 \Tfr\right) \circ_1 \Sfr.
\end{equation}
In an alternative way, $\Sfr \Op_a \Tfr$ is the $\Pca$-alternating
Schröder obtained by considering the normal form by $\RewS_\Pca'$ of the
tree obtained by grafting $\Sfr$ and $\Tfr$ respectively as left and
right child of a binary corolla labeled by $a$.
\medskip

Let us provide examples of computations in the free $\Pca$-associative
algebra over one generator where $\Pca$ is the forest poset
\begin{equation}
    \Pca :=
    \begin{tikzpicture}
        [baseline=(current bounding box.center),xscale=.45,yscale=.42]
        \node[Vertex](1)at(0,0){\begin{math}1\end{math}};
        \node[Vertex](2)at(-.75,-1){\begin{math}2\end{math}};
        \node[Vertex](3)at(.75,-1){\begin{math}3\end{math}};
        \node[Vertex](4)at(.75,-2){\begin{math}4\end{math}};
        \node[Vertex](5)at(2,0){\begin{math}5\end{math}};
        \draw[Edge](1)--(2);
        \draw[Edge](1)--(3);
        \draw[Edge](3)--(4);
    \end{tikzpicture}\,.
\end{equation}
We have
\begin{subequations}
\begin{equation}
    \begin{tikzpicture}
        [baseline=(current bounding box.center),xscale=.25,yscale=.18]
        \node[Leaf](0)at(0.00,-2.00){};
        \node[Leaf](2)at(1.00,-2.00){};
        \node[Leaf](3)at(2.00,-4.00){};
        \node[Leaf](5)at(4.00,-4.00){};
        \node[Node](1)at(1.00,0.00){\begin{math}2\end{math}};
        \node[Node](4)at(3.00,-2.00){\begin{math}4\end{math}};
        \draw[Edge](0)--(1);
        \draw[Edge](2)--(1);
        \draw[Edge](3)--(4);
        \draw[Edge](4)--(1);
        \draw[Edge](5)--(4);
        \node(r)at(1.00,2.25){};
        \draw[Edge](r)--(1);
    \end{tikzpicture}
    \enspace \Op_1 \enspace
    \begin{tikzpicture}
        [baseline=(current bounding box.center),xscale=.25,yscale=.17]
        \node[Leaf](0)at(0.00,-4.67){};
        \node[Leaf](2)at(2.00,-4.67){};
        \node[Leaf](4)at(4.00,-4.67){};
        \node[Leaf](6)at(6.00,-4.67){};
        \node[Node,Mark2](1)at(1.00,-2.33){\begin{math}2\end{math}};
        \node[Node,Mark2](3)at(3.00,0.00){\begin{math}3\end{math}};
        \node[Node,Mark2](5)at(5.00,-2.33){\begin{math}5\end{math}};
        \draw[Edge](0)--(1);
        \draw[Edge](1)--(3);
        \draw[Edge](2)--(1);
        \draw[Edge](4)--(5);
        \draw[Edge](5)--(3);
        \draw[Edge](6)--(5);
        \node(r)at(3.00,2.25){};
        \draw[Edge](r)--(3);
    \end{tikzpicture}
    \enspace = \enspace
    \begin{tikzpicture}
        [baseline=(current bounding box.center),xscale=.18,yscale=.14]
        \node[Leaf](0)at(0.00,-3.33){};
        \node[Leaf](1)at(1.00,-3.33){};
        \node[Leaf](2)at(2.00,-3.33){};
        \node[Leaf](4)at(3.00,-3.33){};
        \node[Leaf](5)at(4.00,-3.33){};
        \node[Leaf](6)at(5.00,-3.33){};
        \node[Leaf](7)at(6.00,-6.67){};
        \node[Leaf](9)at(8.00,-6.67){};
        \node[Node,Mark1](3)at(3.00,0.00){\begin{math}1\end{math}};
        \node[Node,Mark2](8)at(7.00,-3.33){\begin{math}5\end{math}};
        \draw[Edge](0)--(3);
        \draw[Edge](1)--(3);
        \draw[Edge](2)--(3);
        \draw[Edge](4)--(3);
        \draw[Edge](5)--(3);
        \draw[Edge](6)--(3);
        \draw[Edge](7)--(8);
        \draw[Edge](8)--(3);
        \draw[Edge](9)--(8);
        \node(r)at(3.00,2.50){};
        \draw[Edge](r)--(3);
    \end{tikzpicture}\,,
\end{equation}
\begin{equation}
    \begin{tikzpicture}
        [baseline=(current bounding box.center),xscale=.25,yscale=.18]
        \node[Leaf](0)at(0.00,-2.00){};
        \node[Leaf](2)at(1.00,-2.00){};
        \node[Leaf](3)at(2.00,-4.00){};
        \node[Leaf](5)at(4.00,-4.00){};
        \node[Node](1)at(1.00,0.00){\begin{math}2\end{math}};
        \node[Node](4)at(3.00,-2.00){\begin{math}4\end{math}};
        \draw[Edge](0)--(1);
        \draw[Edge](2)--(1);
        \draw[Edge](3)--(4);
        \draw[Edge](4)--(1);
        \draw[Edge](5)--(4);
        \node(r)at(1.00,2.25){};
        \draw[Edge](r)--(1);
    \end{tikzpicture}
    \enspace \Op_2 \enspace
    \begin{tikzpicture}
        [baseline=(current bounding box.center),xscale=.25,yscale=.17]
        \node[Leaf](0)at(0.00,-4.67){};
        \node[Leaf](2)at(2.00,-4.67){};
        \node[Leaf](4)at(4.00,-4.67){};
        \node[Leaf](6)at(6.00,-4.67){};
        \node[Node,Mark2](1)at(1.00,-2.33){\begin{math}2\end{math}};
        \node[Node,Mark2](3)at(3.00,0.00){\begin{math}3\end{math}};
        \node[Node,Mark2](5)at(5.00,-2.33){\begin{math}5\end{math}};
        \draw[Edge](0)--(1);
        \draw[Edge](1)--(3);
        \draw[Edge](2)--(1);
        \draw[Edge](4)--(5);
        \draw[Edge](5)--(3);
        \draw[Edge](6)--(5);
        \node(r)at(3.00,2.25){};
        \draw[Edge](r)--(3);
    \end{tikzpicture}
    \enspace = \enspace
    \begin{tikzpicture}
        [baseline=(current bounding box.center),xscale=.22,yscale=.13]
        \node[Leaf](0)at(0.00,-3.25){};
        \node[Leaf](1)at(1.00,-3.25){};
        \node[Leaf](10)at(10.00,-9.75){};
        \node[Leaf](12)at(12.00,-9.75){};
        \node[Leaf](3)at(3.00,-6.50){};
        \node[Leaf](5)at(5.00,-6.50){};
        \node[Leaf](6)at(6.00,-9.75){};
        \node[Leaf](8)at(8.00,-9.75){};
        \node[Node,Mark2](11)at(11.00,-6.50){\begin{math}5\end{math}};
        \node[Node,Mark1](2)at(2.00,0.00){\begin{math}2\end{math}};
        \node[Node](4)at(4.00,-3.25){\begin{math}4\end{math}};
        \node[Node,Mark2](7)at(7.00,-6.50){\begin{math}2\end{math}};
        \node[Node,Mark2](9)at(9.00,-3.25){\begin{math}3\end{math}};
        \draw[Edge](0)--(2);
        \draw[Edge](1)--(2);
        \draw[Edge](10)--(11);
        \draw[Edge](11)--(9);
        \draw[Edge](12)--(11);
        \draw[Edge](3)--(4);
        \draw[Edge](4)--(2);
        \draw[Edge](5)--(4);
        \draw[Edge](6)--(7);
        \draw[Edge](7)--(9);
        \draw[Edge](8)--(7);
        \draw[Edge](9)--(2);
        \node(r)at(2.00,2.75){};
        \draw[Edge](r)--(2);
    \end{tikzpicture}\,,
\end{equation}
\begin{equation}
    \begin{tikzpicture}
        [baseline=(current bounding box.center),xscale=.25,yscale=.18]
        \node[Leaf](0)at(0.00,-2.00){};
        \node[Leaf](2)at(1.00,-2.00){};
        \node[Leaf](3)at(2.00,-4.00){};
        \node[Leaf](5)at(4.00,-4.00){};
        \node[Node](1)at(1.00,0.00){\begin{math}2\end{math}};
        \node[Node](4)at(3.00,-2.00){\begin{math}4\end{math}};
        \draw[Edge](0)--(1);
        \draw[Edge](2)--(1);
        \draw[Edge](3)--(4);
        \draw[Edge](4)--(1);
        \draw[Edge](5)--(4);
        \node(r)at(1.00,2.25){};
        \draw[Edge](r)--(1);
    \end{tikzpicture}
    \enspace \Op_3 \enspace
    \begin{tikzpicture}
        [baseline=(current bounding box.center),xscale=.25,yscale=.17]
        \node[Leaf](0)at(0.00,-4.67){};
        \node[Leaf](2)at(2.00,-4.67){};
        \node[Leaf](4)at(4.00,-4.67){};
        \node[Leaf](6)at(6.00,-4.67){};
        \node[Node,Mark2](1)at(1.00,-2.33){\begin{math}2\end{math}};
        \node[Node,Mark2](3)at(3.00,0.00){\begin{math}3\end{math}};
        \node[Node,Mark2](5)at(5.00,-2.33){\begin{math}5\end{math}};
        \draw[Edge](0)--(1);
        \draw[Edge](1)--(3);
        \draw[Edge](2)--(1);
        \draw[Edge](4)--(5);
        \draw[Edge](5)--(3);
        \draw[Edge](6)--(5);
        \node(r)at(3.00,2.25){};
        \draw[Edge](r)--(3);
    \end{tikzpicture}
    \enspace = \enspace
    \begin{tikzpicture}
        [baseline=(current bounding box.center),xscale=.25,yscale=.14]
        \node[Leaf](0)at(0.00,-6.50){};
        \node[Leaf](10)at(8.00,-6.50){};
        \node[Leaf](12)at(10.00,-6.50){};
        \node[Leaf](2)at(1.00,-6.50){};
        \node[Leaf](3)at(2.00,-9.75){};
        \node[Leaf](5)at(4.00,-9.75){};
        \node[Leaf](7)at(5.00,-6.50){};
        \node[Leaf](9)at(7.00,-6.50){};
        \node[Node](1)at(1.00,-3.25){\begin{math}2\end{math}};
        \node[Node,Mark2](11)at(9.00,-3.25){\begin{math}5\end{math}};
        \node[Node](4)at(3.00,-6.50){\begin{math}4\end{math}};
        \node[Node,Mark1](6)at(6.00,0.00){\begin{math}3\end{math}};
        \node[Node,Mark2](8)at(6.00,-3.25){\begin{math}2\end{math}};
        \draw[Edge](0)--(1);
        \draw[Edge](1)--(6);
        \draw[Edge](10)--(11);
        \draw[Edge](11)--(6);
        \draw[Edge](12)--(11);
        \draw[Edge](2)--(1);
        \draw[Edge](3)--(4);
        \draw[Edge](4)--(1);
        \draw[Edge](5)--(4);
        \draw[Edge](7)--(8);
        \draw[Edge](8)--(6);
        \draw[Edge](9)--(8);
        \node(r)at(6.00,2.44){};
        \draw[Edge](r)--(6);
    \end{tikzpicture}\,,
\end{equation}
\begin{equation}
    \begin{tikzpicture}
        [baseline=(current bounding box.center),xscale=.25,yscale=.18]
        \node[Leaf](0)at(0.00,-2.00){};
        \node[Leaf](2)at(1.00,-2.00){};
        \node[Leaf](3)at(2.00,-4.00){};
        \node[Leaf](5)at(4.00,-4.00){};
        \node[Node](1)at(1.00,0.00){\begin{math}2\end{math}};
        \node[Node](4)at(3.00,-2.00){\begin{math}4\end{math}};
        \draw[Edge](0)--(1);
        \draw[Edge](2)--(1);
        \draw[Edge](3)--(4);
        \draw[Edge](4)--(1);
        \draw[Edge](5)--(4);
        \node(r)at(1.00,2.25){};
        \draw[Edge](r)--(1);
    \end{tikzpicture}
    \enspace \Op_4 \enspace
    \begin{tikzpicture}
        [baseline=(current bounding box.center),xscale=.25,yscale=.17]
        \node[Leaf](0)at(0.00,-4.67){};
        \node[Leaf](2)at(2.00,-4.67){};
        \node[Leaf](4)at(4.00,-4.67){};
        \node[Leaf](6)at(6.00,-4.67){};
        \node[Node,Mark2](1)at(1.00,-2.33){\begin{math}2\end{math}};
        \node[Node,Mark2](3)at(3.00,0.00){\begin{math}3\end{math}};
        \node[Node,Mark2](5)at(5.00,-2.33){\begin{math}5\end{math}};
        \draw[Edge](0)--(1);
        \draw[Edge](1)--(3);
        \draw[Edge](2)--(1);
        \draw[Edge](4)--(5);
        \draw[Edge](5)--(3);
        \draw[Edge](6)--(5);
        \node(r)at(3.00,2.25){};
        \draw[Edge](r)--(3);
    \end{tikzpicture}
    \enspace = \enspace
    \begin{tikzpicture}
        [baseline=(current bounding box.center),xscale=.25,yscale=.14]
        \node[Leaf](0)at(0.00,-6.50){};
        \node[Leaf](10)at(8.00,-6.50){};
        \node[Leaf](12)at(10.00,-6.50){};
        \node[Leaf](2)at(1.00,-6.50){};
        \node[Leaf](3)at(2.00,-9.75){};
        \node[Leaf](5)at(4.00,-9.75){};
        \node[Leaf](7)at(5.00,-6.50){};
        \node[Leaf](9)at(7.00,-6.50){};
        \node[Node](1)at(1.00,-3.25){\begin{math}2\end{math}};
        \node[Node,Mark2](11)at(9.00,-3.25){\begin{math}5\end{math}};
        \node[Node](4)at(3.00,-6.50){\begin{math}4\end{math}};
        \node[Node,Mark1](6)at(6.00,0.00){\begin{math}3\end{math}};
        \node[Node,Mark2](8)at(6.00,-3.25){\begin{math}2\end{math}};
        \draw[Edge](0)--(1);
        \draw[Edge](1)--(6);
        \draw[Edge](10)--(11);
        \draw[Edge](11)--(6);
        \draw[Edge](12)--(11);
        \draw[Edge](2)--(1);
        \draw[Edge](3)--(4);
        \draw[Edge](4)--(1);
        \draw[Edge](5)--(4);
        \draw[Edge](7)--(8);
        \draw[Edge](8)--(6);
        \draw[Edge](9)--(8);
        \node(r)at(6.00,2.44){};
        \draw[Edge](r)--(6);
    \end{tikzpicture}\,,
\end{equation}
\begin{equation}
    \begin{tikzpicture}
        [baseline=(current bounding box.center),xscale=.25,yscale=.18]
        \node[Leaf](0)at(0.00,-2.00){};
        \node[Leaf](2)at(1.00,-2.00){};
        \node[Leaf](3)at(2.00,-4.00){};
        \node[Leaf](5)at(4.00,-4.00){};
        \node[Node](1)at(1.00,0.00){\begin{math}2\end{math}};
        \node[Node](4)at(3.00,-2.00){\begin{math}4\end{math}};
        \draw[Edge](0)--(1);
        \draw[Edge](2)--(1);
        \draw[Edge](3)--(4);
        \draw[Edge](4)--(1);
        \draw[Edge](5)--(4);
        \node(r)at(1.00,2.25){};
        \draw[Edge](r)--(1);
    \end{tikzpicture}
    \enspace \Op_5 \enspace
    \begin{tikzpicture}
        [baseline=(current bounding box.center),xscale=.25,yscale=.17]
        \node[Leaf](0)at(0.00,-4.67){};
        \node[Leaf](2)at(2.00,-4.67){};
        \node[Leaf](4)at(4.00,-4.67){};
        \node[Leaf](6)at(6.00,-4.67){};
        \node[Node,Mark2](1)at(1.00,-2.33){\begin{math}2\end{math}};
        \node[Node,Mark2](3)at(3.00,0.00){\begin{math}3\end{math}};
        \node[Node,Mark2](5)at(5.00,-2.33){\begin{math}5\end{math}};
        \draw[Edge](0)--(1);
        \draw[Edge](1)--(3);
        \draw[Edge](2)--(1);
        \draw[Edge](4)--(5);
        \draw[Edge](5)--(3);
        \draw[Edge](6)--(5);
        \node(r)at(3.00,2.25){};
        \draw[Edge](r)--(3);
    \end{tikzpicture}
    \enspace = \enspace
    \begin{tikzpicture}
        [baseline=(current bounding box.center),xscale=.25,yscale=.11]
        \node[Leaf](0)at(0.00,-7.00){};
        \node[Leaf](11)at(10.00,-10.50){};
        \node[Leaf](13)at(12.00,-10.50){};
        \node[Leaf](2)at(1.00,-7.00){};
        \node[Leaf](3)at(2.00,-10.50){};
        \node[Leaf](5)at(4.00,-10.50){};
        \node[Leaf](7)at(6.00,-10.50){};
        \node[Leaf](9)at(8.00,-10.50){};
        \node[Node](1)at(1.00,-3.50){\begin{math}2\end{math}};
        \node[Node,Mark2](10)at(9.00,-3.50){\begin{math}3\end{math}};
        \node[Node,Mark2](12)at(11.00,-7.00){\begin{math}5\end{math}};
        \node[Node](4)at(3.00,-7.00){\begin{math}4\end{math}};
        \node[Node,Mark1](6)at(5.00,0.00){\begin{math}5\end{math}};
        \node[Node,Mark2](8)at(7.00,-7.00){\begin{math}2\end{math}};
        \draw[Edge](0)--(1);
        \draw[Edge](1)--(6);
        \draw[Edge](10)--(6);
        \draw[Edge](11)--(12);
        \draw[Edge](12)--(10);
        \draw[Edge](13)--(12);
        \draw[Edge](2)--(1);
        \draw[Edge](3)--(4);
        \draw[Edge](4)--(1);
        \draw[Edge](5)--(4);
        \draw[Edge](7)--(8);
        \draw[Edge](8)--(10);
        \draw[Edge](9)--(8);
        \node(r)at(5.00,3.25){};
        \draw[Edge](r)--(6);
    \end{tikzpicture}\,.
\end{equation}
\end{subequations}
\medskip

\subsection{Koszul dual}
We now establish a first presentation for the Koszul dual $\As(\Pca)^!$
of $\As(\Pca)$ where $\Pca$ is a poset (and not necessarily a forest
poset) and provide moreover a second presentation of $\As(\Pca)^!$ when
$\Pca$ is a forest poset. This second presentation of $\As(\Pca)^!$ is
simpler than the first one and it shall be considered in the next section.
\medskip

\subsubsection{Presentation by generators and relations}

\begin{Proposition} \label{prop:presentation_koszul_dual}
    Let $\Pca$ be a poset. Then, the Koszul dual $\As(\Pca)^!$ of
    $\As(\Pca)$ admits the following presentation. It is generated by
    \begin{equation}
        \FreeGen_\Pca^{\OpDual} := \FreeGen_\Pca^{\OpDual}(2)
        := \{\OpDual_a : a \in \Pca\},
    \end{equation}
    and its space of relations $\FreeRel_\Pca^{\OpDual}$ is generated by
    \begin{subequations}
    \begin{equation} \label{equ:relation_dual_1}
        \OpDual_a \circ_1 \OpDual_a - \OpDual_a \circ_2 \OpDual_a +
        \sum_{\substack{b \in \Pca \\ a \OrdStrict_\Pca b}}
        (\OpDual_b \circ_1 \OpDual_a + \OpDual_a \circ_1 \OpDual_b
        - \OpDual_b \circ_2 \OpDual_a - \OpDual_a \circ_2 \OpDual_b),
        \qquad a \in \Pca,
    \end{equation}
    \begin{equation} \label{equ:relation_dual_2}
        \OpDual_c \circ_1 \OpDual_d,
        \qquad c, d \in \Pca
        \mbox{ and } c \not \Ord_\Pca d
        \mbox{ and } d \not \Ord_\Pca c,
    \end{equation}
    \begin{equation} \label{equ:relation_dual_3}
        \OpDual_c \circ_2 \OpDual_d,
        \qquad c, d \in \Pca
        \mbox{ and } c \not \Ord_\Pca d
        \mbox{ and } d \not \Ord_\Pca c.
    \end{equation}
    \end{subequations}
\end{Proposition}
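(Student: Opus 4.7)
The plan is to apply the definition of Koszul duality recalled in Section~\ref{subsubsec:koszul_duality_koszulity_criterion}: $\As(\Pca)^!$ is isomorphic to the operad with presentation $(\FreeGen_\Pca^{\OpDual}, \FreeRel_\Pca^{\OpDual})$ where $\FreeRel_\Pca^{\OpDual}$ is the annihilator of $\FreeRel_\Pca^\Op$ inside $\Free(\FreeGen_\Pca^{\OpDual})(3)$ with respect to the scalar product~\eqref{equ:scalar_product_koszul}. The task therefore reduces to showing that the three families \eqref{equ:relation_dual_1}, \eqref{equ:relation_dual_2}, and \eqref{equ:relation_dual_3} together form a basis of this annihilator.

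Denote by $R_{a,b}^1 := \Op_a \circ_1 \Op_b - \Op_{a \Min_\Pca b} \circ_2 \Op_{a \Min_\Pca b}$ and $R_{a,b}^2 := \Op_{a \Min_\Pca b} \circ_1 \Op_{a \Min_\Pca b} - \Op_a \circ_2 \Op_b$, for comparable $a, b \in \Pca$, the generators of $\FreeRel_\Pca^\Op$ provided by~\eqref{equ:relation_1} and~\eqref{equ:relation_2}. I would first verify that each of \eqref{equ:relation_dual_1}, \eqref{equ:relation_dual_2}, \eqref{equ:relation_dual_3} is orthogonal to every $R_{a,b}^i$. The check for \eqref{equ:relation_dual_2} and \eqref{equ:relation_dual_3} is immediate: the scalar product vanishes on cross pairs of $\circ_1$- and $\circ_2$-terms, and the terms appearing in the $R_{a,b}^i$ are always of the form $\Op_x \circ_i \Op_y$ with $x$ and $y$ comparable in $\Pca$, so they cannot pair with $\OpDual_c \circ_i \OpDual_d$ when $c$ and $d$ are incomparable. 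For~\eqref{equ:relation_dual_1}, a case analysis on whether $a = b$, $a \OrdStrict_\Pca b$, or $b \OrdStrict_\Pca a$ shows that, denoting by $r_e$ the element of~\eqref{equ:relation_dual_1} attached to $e \in \Pca$, each nontrivial matching between $r_e$ and $R_{a,b}^i$ produces exactly two contributions of opposite sign, one from a $\circ_1$-term and one from a $\circ_2$-term, which cancel thanks to the sign conventions $\epsilon_1 = 1$ and $\epsilon_2 = -1$ of the scalar product whenever $e \in \{a, b, a \Min_\Pca b\}$; all other contributions vanish trivially.

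To conclude, I would argue that these relations span the entire annihilator by a dimension count. Linear independence is clear: the element $r_e$ in \eqref{equ:relation_dual_1} is the only one to feature $\OpDual_e \circ_1 \OpDual_e$, so the $\#\Pca$ elements of~\eqref{equ:relation_dual_1} are independent; moreover, all their nonzero basis components $\OpDual_x \circ_i \OpDual_y$ involve comparable $x, y$, whereas \eqref{equ:relation_dual_2} and \eqref{equ:relation_dual_3} consist of pairwise distinct basis elements supported on incomparable ordered pairs. The proposed family then has cardinality
\begin{equation}
    \#\Pca + 2\left((\#\Pca)^2 - 2 \NbInterv(\Pca) + \#\Pca\right)
    = 2(\#\Pca)^2 - 4 \NbInterv(\Pca) + 3 \#\Pca,
\end{equation}
since the number of ordered comparable pairs of $\Pca$ equals $2 \NbInterv(\Pca) - \#\Pca$, so the number of incomparable ordered pairs is $(\#\Pca)^2 - 2 \NbInterv(\Pca) + \#\Pca$. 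Combined with Proposition~\ref{prop:dimension_relations}, this matches exactly $\dim \Free(\FreeGen_\Pca^{\OpDual})(3) - \dim \FreeRel_\Pca^\Op$, so the family spans $\FreeRel_\Pca^{\OpDual}$.

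The main technical obstacle is the orthogonality verification for~\eqref{equ:relation_dual_1}: although no single case is difficult, one must handle three cases for the shape of each $R_{a,b}^i$ and, within each, distinguish which of $a$, $b$, or $a \Min_\Pca b$ coincides with $e$, carefully tracking the signs arising from $\epsilon_2 = -1$ so that the $\circ_1$- and $\circ_2$-contributions pair up and cancel.
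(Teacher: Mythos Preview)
Your argument is correct. It differs from the paper's proof in methodology: the paper proceeds by taking a generic element $x = \sum_\Tfr \lambda_\Tfr \Tfr$ of the annihilator, writing out the linear constraints $\langle r, x\rangle = 0$ for each generator $r$ of $\FreeRel_\Pca^\Op$, and solving these to deduce that $x$ is a combination of the claimed relations. You instead verify orthogonality directly for each proposed relation and then use a dimension count (comparing $\dim\Free(\FreeGen_\Pca^{\OpDual})(3) - \dim\FreeRel_\Pca^\Op$ with the size of the proposed family) to conclude that nothing is missing. Both approaches are standard linear algebra; yours has the advantage of being entirely mechanical and of making the dimension equality explicit, which the paper only checks afterwards in Proposition~\ref{prop:dimension_relations_dual}, whereas the paper's approach explains more directly \emph{why} the relations~\eqref{equ:relation_dual_1} take their particular shape. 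The orthogonality case analysis you flag as the main obstacle is indeed routine once one notes that every basis tree appearing in any $R_{a,b}^i$ has both labels comparable and that, for fixed $e$, the $\circ_1$- and $\circ_2$-terms of $r_e$ are matched in sign-reversing pairs against each $R_{a,b}^i$.
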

\begin{proof}
    Let
    \begin{equation}
        x :=
        \sum_{\Tfr \in \Free(\FreeGen_\Pca^\Op)(3)}
        \lambda_\Tfr \, \Tfr,
    \end{equation}
    be an element of $\FreeRel_\Pca^\Op$, where the $\lambda_\Tfr$ are
    elements of $\K$. By definition of the Koszul duality of operads,
    $\langle r, x \rangle = 0$ for all $r \in \FreeRel_\Pca^\Op$,
    where $\langle -, - \rangle$ is the scalar product defined
    in~\eqref{equ:scalar_product_koszul}. Then,
    Relations~\eqref{equ:relation_1} and~\eqref{equ:relation_2} imply
    the relations
    \begin{subequations}
    \begin{equation}
        \lambda_{\Op_a \circ_1 \Op_b} +
        \lambda_{\Op_{a \Min_\Pca b} \circ_2 \Op_{a \Min_\Pca b}} = 0,
    \end{equation}
    \begin{equation}
        \lambda_{\Op_{a \Min_\Pca b} \circ_1 \Op_{a \Min_\Pca b}} +
        \lambda_{\Op_a \circ_2 \Op_b} = 0,
    \end{equation}
    \end{subequations}
    between the $\lambda_\Tfr$, for all $a, b \in \Pca$ such that
    $a \Ord_\Pca b$ or $b \Ord_\Pca a$. This implies that $x$ is of the
    form
    \begin{equation}\begin{split}
        x = \lambda_1
            \sum_{a \in \Pca} (\Op_a \circ_1 \Op_a - \Op_a \circ_2 \Op_a)
            + \lambda_1
            \sum_{\substack{a, b \in \Pca \\ a \OrdStrict_\Pca b}}
            (\Op_b \circ_1 \Op_a + \Op_a \circ_1 \Op_b
            - \Op_b \circ_2 \Op_a - \Op_a \circ_2 \Op_b) \\
            + \lambda_2
            \sum_{\substack{c, d \in \Pca \\ c \not \Ord_\Pca d \\
                d \not \Ord_\Pca c}} (\Op_c \circ_1 \Op_d)
            + \lambda_3
            \sum_{\substack{c, d \in \Pca \\ c \not \Ord_\Pca d \\
                d \not \Ord_\Pca c}} (\Op_c \circ_2 \Op_d),
    \end{split}\end{equation}
    where $\lambda_1$, $\lambda_2$, and $\lambda_3$ are elements of $\K$.
    Therefore, by identifying $\OpDual_a$ with $\Op_a$ for all
    $a \in \Pca$, we obtain that $\FreeRel_\Pca^{\OpDual}$ is generated
    by the elements~\eqref{equ:relation_dual_1},
    \eqref{equ:relation_dual_2}, and~\eqref{equ:relation_dual_3}.
\end{proof}
\medskip

\begin{Proposition} \label{prop:dimension_relations_dual}
    Let $\Pca$ be a poset. Then, the dimension of the space
    $\FreeRel_\Pca^{\OpDual}$ of relations of $\As(\Pca)^!$ satisfies
    \begin{equation} \label{equ:dimension_relations_koszul_dual}
        \dim \FreeRel_\Pca^{\OpDual} =
        2\, (\# \Pca)^2 + 3\, \# \Pca - 4\, \NbInterv(\Pca).
    \end{equation}
\end{Proposition}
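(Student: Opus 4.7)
The plan is to use the generating set of $\FreeRel_\Pca^{\OpDual}$ provided by Proposition~\ref{prop:presentation_koszul_dual} and show that its three families of elements \eqref{equ:relation_dual_1}, \eqref{equ:relation_dual_2}, \eqref{equ:relation_dual_3} are linearly independent in $\Free(\FreeGen_\Pca^{\OpDual})(3)$; once this is established, the dimension of $\FreeRel_\Pca^{\OpDual}$ is just the total count of generators, which I will then match with the right-hand side of~\eqref{equ:dimension_relations_koszul_dual}.

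First, I would isolate the two disjoint subsets of the canonical basis of $\Free(\FreeGen_\Pca^{\OpDual})(3)$ that carry the three families: the elements $\OpDual_c \circ_i \OpDual_d$ with $c$ and $d$ incomparable (carrying \eqref{equ:relation_dual_2} and \eqref{equ:relation_dual_3}) and the elements $\OpDual_x \circ_i \OpDual_y$ with $x$ and $y$ comparable (carrying \eqref{equ:relation_dual_1}). Since each element of \eqref{equ:relation_dual_2} (resp. \eqref{equ:relation_dual_3}) is itself a distinct basis tree, each of these two families is linearly independent. For \eqref{equ:relation_dual_1}, the key observation is that the generator indexed by $a$ is the only one containing the basis element $\OpDual_a \circ_1 \OpDual_a$, because for $a' \ne a$ the diagonal term $\OpDual_{a'} \circ_1 \OpDual_{a'}$ is distinct and the summation terms $\OpDual_b \circ_i \OpDual_{a'}$ and $\OpDual_{a'} \circ_i \OpDual_b$ in the relation for $a'$ all involve the index $a'$. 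Hence \eqref{equ:relation_dual_1} is linearly independent, and it lives in the subspace spanned by comparable basis trees, disjoint from the subspace spanned by the incomparable ones used in \eqref{equ:relation_dual_2}-\eqref{equ:relation_dual_3}.

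It then remains to count. The family \eqref{equ:relation_dual_1} contributes $\#\Pca$ generators. For the incomparable ordered pairs $(c,d)$, I would use the identity: the number of comparable ordered pairs, namely pairs with $c \Ord_\Pca d$ or $d \Ord_\Pca c$, equals $2\NbInterv(\Pca) - \#\Pca$ by inclusion-exclusion (pairs with $c \Ord_\Pca d$ number $\NbInterv(\Pca)$, same for $d \Ord_\Pca c$, intersection being the diagonal of size $\#\Pca$). Therefore, the number of incomparable ordered pairs equals $(\#\Pca)^2 - 2\NbInterv(\Pca) + \#\Pca$, and this is the number of generators of type \eqref{equ:relation_dual_2}, as well as of type \eqref{equ:relation_dual_3}.

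Summing these three contributions yields
\begin{equation*}
\dim \FreeRel_\Pca^{\OpDual} = \#\Pca + 2\bigl((\#\Pca)^2 - 2\NbInterv(\Pca) + \#\Pca\bigr) = 2(\#\Pca)^2 + 3\#\Pca - 4\NbInterv(\Pca),
\end{equation*}
which is~\eqref{equ:dimension_relations_koszul_dual}. The only nontrivial point is the linear independence of the full collection, and the argument above reduces it to the observation that the three families sit in mutually compatible positions with respect to the basis of $\Free(\FreeGen_\Pca^{\OpDual})(3)$; I do not expect any substantive obstacle beyond keeping track of which basis trees appear in \eqref{equ:relation_dual_1}.
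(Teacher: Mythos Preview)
Your proposal is correct and follows essentially the same approach as the paper: both use the presentation of Proposition~\ref{prop:presentation_koszul_dual}, observe that the three generating families \eqref{equ:relation_dual_1}, \eqref{equ:relation_dual_2}, \eqref{equ:relation_dual_3} are linearly independent and span subspaces in direct sum, count incomparable ordered pairs via $(\#\Pca)^2 - (2\,\NbInterv(\Pca) - \#\Pca)$, and sum the contributions. You supply a bit more detail on why the family \eqref{equ:relation_dual_1} is independent (via the diagonal term $\OpDual_a \circ_1 \OpDual_a$), which the paper simply asserts, but the argument is otherwise the same.
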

\begin{proof}
    To compute the dimension of the space of relations
    $\FreeRel_\Pca^{\OpDual}$ of $\As(\Pca)^!$, we consider the
    presentation of $\As(\Pca)^!$ exhibited by
    Proposition~\ref{prop:presentation_koszul_dual}. Consider the space
    $\FreeRel_1$ generated by the family consisting in the elements
    of~\eqref{equ:relation_dual_1}. Since this family is linearly
    independent and each of its elements is totally specified by an $a$
    of $\Pca$, $\FreeRel_1$ is of dimension $\# \Pca$. Consider now the
    space $\FreeRel_2$ generated by the family consisting in the elements
    of~\eqref{equ:relation_dual_2}. This family is linearly independent
    and each of its elements is totally specified by two incomparable
    elements $c$ and $d$ of $\Pca$. Since the number of pairs of
    comparable elements of $\Pca$ is $2\, \NbInterv(\Pca) - \# \Pca$, we
    obtain
    \begin{equation}
        \dim \FreeRel_2 = (\# \Pca)^2 + \# \Pca - 2\, \NbInterv(\Pca).
    \end{equation}
    For the same reason, the dimension of the space $\FreeRel_3$
    generated by the elements of~\eqref{equ:relation_dual_3} satisfies
    $\dim \FreeRel_3 = \dim \FreeRel_2$. Therefore, since
    \begin{equation}
        \FreeRel_\Pca^{\OpDual} =
        \FreeRel_1 \oplus \FreeRel_2 \oplus \FreeRel_3,
    \end{equation}
    we obtain the stated formula~\eqref{equ:dimension_relations_koszul_dual}
    by summing the dimensions of $\FreeRel_1$, $\FreeRel_2$,
    and~$\FreeRel_3$.
\end{proof}
\medskip

Observe that, by Propositions~\ref{prop:dimension_relations}
and~\ref{prop:dimension_relations_dual}, we have
\begin{equation}\begin{split}
    \dim \FreeRel_\Pca^\Op + \dim \FreeRel_\Pca^{\OpDual} & =
    4 \, \NbInterv(\Pca) - 3 \, \# \Pca +
    2\, (\# \Pca)^2 + 3\, \# \Pca - 4\, \NbInterv(\Pca) \\
    & = 2\, (\# \Pca)^2 \\
    & = \dim \Free(\FreeGen_\Pca^\Op)(3),
\end{split}\end{equation}
as expected by Koszul duality.
\medskip

\subsubsection{Alternative presentation}%
\label{subsubsect:alternative_presentation_koszul_dual}
For any element $a$ of a poset $\Pca$ (not necessarily a forest poset
just now), let $\OpDualB_a$ be the element of
$\Free\left(\FreeGen_\Pca^{\OpDual}\right)(2)$ defined by
\begin{equation} \label{equ:basis_change_dual}
    \OpDualB_a :=
    \sum_{\substack{b \in \Pca \\ a \Ord_\Pca b}}
    \OpDual_b.
\end{equation}
We denote by $\FreeGen_\Pca^{\OpDualB}$ the set of all $\OpDualB_a$,
$a \in \Pca$. By triangularity, the family $\FreeGen_{\Pca}^{\OpDualB}$
forms a basis of $\Free(\FreeGen_\Pca^{\OpDual})(2)$ and hence,
generates $\As(\Pca)^!$. Consider for instance the poset
\begin{equation}
    \Pca :=
    \begin{tikzpicture}
        [baseline=(current bounding box.center),xscale=.4,yscale=.42]
        \node[Vertex](1)at(0,0){\begin{math}1\end{math}};
        \node[Vertex](2)at(2,0){\begin{math}2\end{math}};
        \node[Vertex](3)at(1,-1){\begin{math}3\end{math}};
        \node[Vertex](4)at(3,-1){\begin{math}4\end{math}};
        \node[Vertex](5)at(2,-2){\begin{math}5\end{math}};
        \draw[Edge](1)--(3);
        \draw[Edge](2)--(3);
        \draw[Edge](2)--(4);
        \draw[Edge](4)--(5);
    \end{tikzpicture}\,.
\end{equation}
The elements of $\FreeGen_\Pca^{\OpDualB}$ then express as
\begin{subequations}
\begin{equation}
    \OpDualB_1 = \OpDual_1 + \OpDual_3,
\end{equation}
\begin{equation}
    \OpDualB_2 = \OpDual_2 + \OpDual_3 + \OpDual_4 + \OpDual_5,
\end{equation}
\begin{equation}
    \OpDualB_3 = \OpDual_3,
\end{equation}
\begin{equation}
    \OpDualB_4 = \OpDual_4 + \OpDual_5,
\end{equation}
\begin{equation}
    \OpDualB_5 = \OpDual_5.
\end{equation}
\end{subequations}
\medskip

\begin{Proposition} \label{prop:alternative_presentation_koszul_dual}
    Let $\Pca$ be a forest poset. Then, the operad $\As(\Pca)^!$ admits
    the following presentation. It is generated by
    $\FreeGen_{\Pca}^{\OpDualB}$ and its space of relations
    $\FreeRel_{\Pca}^{\OpDualB}$ is generated by
    \begin{subequations}
    \begin{equation} \label{equ:alternative_relation_dual_1}
        \OpDualB_a \circ_1 \OpDualB_a - \OpDualB_a \circ_2 \OpDualB_a,
        \qquad a \in \Pca,
    \end{equation}
    \begin{equation} \label{equ:alternative_relation_dual_2}
        \OpDualB_c \circ_1 \OpDualB_d,
        \qquad c, d \in \Pca
        \mbox{ and } c \not \Ord_\Pca d
        \mbox{ and } d \not \Ord_\Pca c,
    \end{equation}
    \begin{equation} \label{equ:alternative_relation_dual_3}
        \OpDualB_c \circ_2 \OpDualB_d,
        \qquad c, d \in \Pca
        \mbox{ and } c \not \Ord_\Pca d
        \mbox{ and } d \not \Ord_\Pca c.
    \end{equation}
    \end{subequations}
\end{Proposition}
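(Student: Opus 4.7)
The plan is to derive the new presentation from the one given by Proposition~\ref{prop:presentation_koszul_dual}. Since the change of basis~\eqref{equ:basis_change_dual} is triangular with respect to any linear extension of $\Pca$, the family $\FreeGen_\Pca^{\OpDualB}$ is itself a basis of the span of $\FreeGen_\Pca^{\OpDual}$ and is therefore a generating set for $\As(\Pca)^!$. It remains to show that the three families of elements~\eqref{equ:alternative_relation_dual_1}--\eqref{equ:alternative_relation_dual_3}, once re-expanded in the $\OpDual$ basis via~\eqref{equ:basis_change_dual}, span the same subspace of $\Free(\FreeGen_\Pca^{\OpDual})(3)$ as the relations provided by Proposition~\ref{prop:presentation_koszul_dual}.

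The key input, which is where the forest hypothesis enters, is the following observation: if $c, d \in \Pca$ are incomparable and $c \Ord e$, $d \Ord f$, then $e$ and $f$ are also incomparable. Indeed, were $e$ and $f$ comparable (including the case $e = f$), the set of elements below the larger one would contain both $c$ and $d$; but in a forest poset this downward set is a chain, forcing $c$ and $d$ to be comparable. This immediately implies that expanding~\eqref{equ:alternative_relation_dual_2} and~\eqref{equ:alternative_relation_dual_3} produces only summands of types~\eqref{equ:relation_dual_2} and~\eqref{equ:relation_dual_3}, so both families lie in $\FreeRel_\Pca^{\OpDual}$. For~\eqref{equ:alternative_relation_dual_1}, expansion yields
\begin{equation*}
    \OpDualB_a \circ_1 \OpDualB_a - \OpDualB_a \circ_2 \OpDualB_a
    = \sum_{\substack{b, c \in \Pca \\ a \Ord b, \, a \Ord c}}
    \left(\OpDual_b \circ_1 \OpDual_c - \OpDual_b \circ_2 \OpDual_c\right).
\end{equation*}
Splitting this double sum according to whether $b = c$, $b \OrdStrict c$, $c \OrdStrict b$, or $b, c$ are incomparable, and regrouping the two strictly comparable cases symmetrically, one identifies the result as $\sum_{a \Ord b} r_b$ plus a linear combination of elements of types~\eqref{equ:relation_dual_2} and~\eqref{equ:relation_dual_3}, where $r_b$ denotes the relation~\eqref{equ:relation_dual_1} indexed by~$b$.

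To conclude, I argue by dimension count. The three new families are manifestly linearly independent: \eqref{equ:alternative_relation_dual_2} and~\eqref{equ:alternative_relation_dual_3} are so by triangularity of the $\OpDualB$ basis, and~\eqref{equ:alternative_relation_dual_1} is the only family that involves the diagonal terms $\OpDualB_a \circ_1 \OpDualB_a$. Their total count is $\# \Pca + 2\bigl((\# \Pca)^2 - 2\NbInterv(\Pca) + \# \Pca\bigr) = 2(\# \Pca)^2 + 3\#\Pca - 4\NbInterv(\Pca)$, which equals $\dim \FreeRel_\Pca^{\OpDual}$ by Proposition~\ref{prop:dimension_relations_dual}. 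Being contained in $\FreeRel_\Pca^{\OpDual}$ and of matching dimension, they span the whole space of relations, and the announced presentation follows. The main obstacle will be the careful bookkeeping in the reindexing step for~\eqref{equ:alternative_relation_dual_1}; the forest hypothesis is indispensable in the treatment of~\eqref{equ:alternative_relation_dual_2} and~\eqref{equ:alternative_relation_dual_3}, since in a non-forest poset the expansion could produce summands $\OpDual_e \circ_i \OpDual_f$ with $e$ and $f$ comparable, which do not belong to $\FreeRel_\Pca^{\OpDual}$.
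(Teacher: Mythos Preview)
Your proof is correct and follows essentially the same strategy as the paper's: show that each of the new relations~\eqref{equ:alternative_relation_dual_1}--\eqref{equ:alternative_relation_dual_3}, expanded via~\eqref{equ:basis_change_dual}, lies in $\FreeRel_\Pca^{\OpDual}$, then conclude by a dimension count against Proposition~\ref{prop:dimension_relations_dual}. The only cosmetic difference is that the paper phrases the containment step via the canonical projection $\pi : \Free(\FreeGen_\Pca^{\OpDual}) \to \As(\Pca)^!$ (checking $\pi(x) = 0$ for each new relation), whereas you identify the expansion explicitly as $\sum_{a \Ord_\Pca b} r_b$ plus incomparable-index terms; both arguments invoke the forest hypothesis at exactly the same point, namely to guarantee that the expansion of $\OpDualB_c \circ_i \OpDualB_d$ for incomparable $c,d$ produces only incomparable-index summands.
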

\begin{proof}
    Let us show that $\FreeRel_{\Pca}^{\OpDualB}$ is equal to the space
    of relations $\FreeRel_\Pca^{\OpDual}$ of $\As(\Pca)^!$ defined in
    the statement of Proposition~\ref{prop:presentation_koszul_dual}. By
    this last proposition, for any
    $x \in \Free(\FreeGen_\Pca^{\OpDual})(3)$, $x$ is in
    $\FreeRel_\Pca^{\OpDual}$ if and only if $\pi(x) = 0$ where
    $\pi : \Free(\FreeGen_\Pca^{\OpDual}) \to \As(\Pca)^!$ is the
    canonical projection.
    \smallskip

    Let us compute the image by $\pi$ of the
    elements~\eqref{equ:alternative_relation_dual_1},
    \eqref{equ:alternative_relation_dual_2},
    and~\eqref{equ:alternative_relation_dual_3} generating
    $\FreeRel_\Pca^{\OpDualB}$, by expanding these over the elements
    $\OpDual_a$, $a \in \Pca$, by using~\eqref{equ:basis_change_dual}.
    We first have, for all $a \in \Pca$,
    \begin{equation}\begin{split}
    \label{equ:alternative_presentation_koszul_dual_computation_1}
        \pi\left(\OpDualB_a \circ_1 \OpDualB_a
                - \OpDualB_a \circ_2 \OpDualB_a\right)
            & = \sum_{\substack{b, b' \in \Pca \\
                a \Ord_\Pca b \\ a \Ord_\Pca b'}}
            \pi\left(\OpDual_b \circ_1 \OpDual_{b'}\right) -
            \pi\left(\OpDual_b \circ_2 \OpDual_{b'}\right) \\
            & = \sum_{\substack{b \in \Pca \\
                a \Ord_\Pca b}}
            \pi\left(\OpDual_b \circ_1 \OpDual_b -
            \OpDual_b \circ_2 \OpDual_b\right) \\
            & \qquad +
            \sum_{\substack{b \in \Pca \\
                a \Ord_\Pca b}} \;
            \sum_{\substack{b' \in \Pca \\
                b \OrdStrict_\Pca b'}}
            \pi\left(\OpDual_b \circ_1 \OpDual_{b'}
                + \OpDual_{b'} \circ_1 \OpDual_b
                - \OpDual_b \circ_2 \OpDual_{b'}
                - \OpDual_{b'} \circ_2 \OpDual_b\right) \\
            & = 0.
    \end{split}\end{equation}
    Indeed, the second equality
    of~\eqref{equ:alternative_presentation_koszul_dual_computation_1}
    comes from the fact that $\pi(\OpDual_b \circ_i \OpDual_{b'}) = 0$
    for all $i = 1$ or $i = 2$ whenever $b$ and $b'$ are incomparable
    elements of $\Pca$. The last equality
    of~\eqref{equ:alternative_presentation_koszul_dual_computation_1}
    is a consequence of the fact that~\eqref{equ:relation_dual_1} is
    in $\FreeRel_\Pca^{\OpDual}$. Moreover, for all incomparable
    elements $c$ and $d$ of $\Pca$, we have
    \begin{equation}\begin{split}
        \pi\left(\OpDualB_c \circ_1 \OpDualB_d\right)
            & = \sum_{\substack{c', d' \in \Pca \\
                c \Ord_\Pca c' \\ d \Ord_\Pca d'}}
            \pi\left(\OpDual_{c'} \circ_1 \OpDual_{d'}\right) = 0.
    \end{split}\end{equation}
    Indeed, since $\Pca$ is a forest poset, for all $c', d' \in \Pca$
    such that $c \Ord_\Pca c'$ and $d \Ord_\Pca d'$, $c'$ and $d'$ are
    incomparable in $\Pca$. We have shown that $\FreeRel_\Pca^{\OpDualB}$
    is a subspace of $\FreeRel_\Pca^{\OpDual}$.
    \smallskip

    Finally, one can observe that all the
    elements~\eqref{equ:alternative_relation_dual_1},
    \eqref{equ:alternative_relation_dual_2},
    and~\eqref{equ:alternative_relation_dual_3} are linearly independent,
    and, by using the same arguments as the ones used in the proof of
    Proposition~\ref{prop:dimension_relations_dual}, we obtain
    \begin{equation}
        \dim \FreeRel_\Pca^{\OpDualB} =
        2\, (\# \Pca)^2 + 3\, \# \Pca - 4\, \NbInterv(\Pca).
    \end{equation}
    This shows that $\FreeRel_\Pca^{\OpDualB}$ and
    $\FreeRel_\Pca^{\OpDual}$ and have the same dimension. The statement
    of the proposition follows.
\end{proof}
\medskip

By considering the presentation of $\As(\Pca)^!$ furnished by
Proposition~\ref{prop:alternative_presentation_koszul_dual} when $\Pca$
is a forest poset, we obtain by Koszul duality a new presentation
$(\FreeGen_\Pca^{\OpB}, \FreeRel_\Pca^{\OpB})$ for $\As(\Pca)$ where the
set of generators $\FreeGen_\Pca^{\OpB}$ is defined by
\begin{equation}
    \FreeGen_\Pca^{\OpB} := \FreeGen_\Pca^{\OpB}(2)
    := \{\OpB_a : a \in \Pca\},
\end{equation}
and the space of relations $\FreeRel_\Pca^{\OpB}$ is generated by
\begin{subequations}
\begin{equation}
    \OpB_a \circ_1 \OpB_a - \OpB_a \circ_2 \OpB_a,
    \qquad a \in \Pca,
\end{equation}
\begin{equation}
    \OpB_a \circ_1 \OpB_b,
    \qquad a, b \in \Pca
    \mbox{ and (} a \OrdStrict_\Pca b
    \mbox{ or } b \OrdStrict_\Pca a \mbox{)},
\end{equation}
\begin{equation}
    \OpB_a \circ_2 \OpB_b,
    \qquad a, b \in \Pca
    \mbox{ and (} a \OrdStrict_\Pca b
    \mbox{ or } b \OrdStrict_\Pca a \mbox{)}.
\end{equation}
\end{subequations}
\medskip

\subsubsection{Example}
To end this section, let us give a complete example of the spaces of
relations $\FreeRel_\Pca^\Op$, $\FreeRel_\Pca^{\OpDual}$,
$\FreeRel_\Pca^{\OpDualB}$, and $\FreeRel_\Pca^{\OpB}$ of the operads
$\As(\Pca)$ and $\As(\Pca)^!$ where $\Pca$ is the forest poset
\begin{equation}
    \Pca :=
    \begin{tikzpicture}
        [baseline=(current bounding box.center),xscale=.35,yscale=.42]
        \node[Vertex](1)at(0,0){\begin{math}1\end{math}};
        \node[Vertex](2)at(-1,-1){\begin{math}2\end{math}};
        \node[Vertex](3)at(1,-1){\begin{math}3\end{math}};
        \draw[Edge](1)--(2);
        \draw[Edge](1)--(3);
    \end{tikzpicture}\,.
\end{equation}
\medskip

First, by definition of $\As$ and by Lemma~\ref{lem:relations}, the
space of relations  $\FreeRel_\Pca^\Op$ of $\As(\Pca)$ contains
\begin{subequations}
\begin{equation}\begin{split}
    \Op_1 \circ_1 \Op_1
    = \Op_1 \circ_1 \Op_2
    & = \Op_2 \circ_1 \Op_1
    = \Op_1 \circ_1 \Op_3
    = \Op_3 \circ_1 \Op_1 \\
    & = \Op_3 \circ_2 \Op_1
    = \Op_1 \circ_2 \Op_3
    = \Op_2 \circ_2 \Op_1
    = \Op_1 \circ_2 \Op_2
    = \Op_1 \circ_2 \Op_1,
\end{split}
\end{equation}
\begin{equation}
    \Op_2 \circ_1 \Op_2 = \Op_2 \circ_2 \Op_2,
\end{equation}
\begin{equation}
    \Op_3 \circ_1 \Op_3 = \Op_3 \circ_2 \Op_3.
\end{equation}
\end{subequations}
\medskip

By Proposition~\ref{prop:presentation_koszul_dual}, the space of
relations  $\FreeRel_\Pca^{\OpDual}$ of $\As(\Pca)^!$ contains
\begin{subequations}
\begin{equation}\begin{split}
    \OpDual_1 \circ_1 \OpDual_1
    + \OpDual_1 \circ_1 \OpDual_2
    & + \OpDual_2 \circ_1 \OpDual_1
    + \OpDual_1 \circ_1 \OpDual_3
    + \OpDual_3 \circ_1 \OpDual_1 \\
    & = \OpDual_3 \circ_2 \OpDual_1
    + \OpDual_1 \circ_2 \OpDual_3
    + \OpDual_2 \circ_2 \OpDual_1
    + \OpDual_1 \circ_2 \OpDual_2
    + \OpDual_1 \circ_2 \OpDual_1,
\end{split}
\end{equation}
\begin{equation}
    \OpDual_2 \circ_1 \OpDual_2 = \OpDual_2 \circ_2 \OpDual_2,
\end{equation}
\begin{equation}
    \OpDual_3 \circ_1 \OpDual_3 = \OpDual_3 \circ_2 \OpDual_3,
\end{equation}
\begin{equation}
    \OpDual_2 \circ_1 \OpDual_3
    = \OpDual_3 \circ_1 \OpDual_2
    = \OpDual_3 \circ_2 \OpDual_2
    = \OpDual_2 \circ_2 \OpDual_3 = 0.
\end{equation}
\end{subequations}
\medskip

By Proposition~\ref{prop:alternative_presentation_koszul_dual}, the
space of relations  $\FreeRel_\Pca^{\OpDualB}$ of $\As(\Pca)^!$ contains
\begin{subequations}
\begin{equation}
    \OpDualB_1 \circ_1 \OpDualB_1 = \OpDualB_1 \circ_2 \OpDualB_1,
\end{equation}
\begin{equation}
    \OpDualB_2 \circ_1 \OpDualB_2 = \OpDualB_2 \circ_2 \OpDualB_2,
\end{equation}
\begin{equation}
    \OpDualB_3 \circ_1 \OpDualB_3 = \OpDualB_3 \circ_2 \OpDualB_3,
\end{equation}
\begin{equation}
    \OpDualB_2 \circ_1 \OpDualB_3
    = \OpDualB_3 \circ_1 \OpDualB_2
    = \OpDualB_3 \circ_2 \OpDualB_2
    = \OpDualB_2 \circ_2 \OpDualB_3 = 0.
\end{equation}
\end{subequations}
\medskip

Finally, by the observation established at the end of
Section~\ref{subsubsect:alternative_presentation_koszul_dual}, the space
of relations  $\FreeRel_\Pca^{\OpB}$ of $\As(\Pca)$ contains
\begin{subequations}
\begin{equation}
    \OpB_1 \circ_1 \OpB_1 = \OpB_1 \circ_2 \OpB_1,
\end{equation}
\begin{equation}
    \OpB_2 \circ_1 \OpB_2 = \OpB_2 \circ_2 \OpB_2,
\end{equation}
\begin{equation}
    \OpB_3 \circ_1 \OpB_3 = \OpB_3 \circ_2 \OpB_3,
\end{equation}
\begin{equation}\begin{split}
    \OpB_1 \circ_1 \OpB_2
    = \OpB_2 \circ_1 \OpB_1
    & = \OpB_1 \circ_1 \OpB_3
    = \OpB_3 \circ_1 \OpB_1 \\
    & = \OpB_3 \circ_2 \OpB_1
    = \OpB_1 \circ_2 \OpB_3
    = \OpB_2 \circ_2 \OpB_1
    = \OpB_1 \circ_2 \OpB_2 = 0.
\end{split}\end{equation}
\end{subequations}
\medskip

\section{Thin forest posets and Koszul duality}%
\label{sec:thin_forest_posets}
As we have seen in Section~\ref{sec:forest_posets}, certain properties
satisfied by the poset $\Pca$ imply properties for the operad $\As(\Pca)$.
In this section, we show that when $\Pca$ is a forest poset with an extra
condition, the Koszul dual $\As(\Pca)^!$ of $\As(\Pca)$ can be
constructed via the construction $\As$.
\medskip

\subsection{Thin forest posets}
A subclass of the class of forest posets, whose elements are called thin
forest posets, is described here. We also define an involution on these
posets linked, as we shall see later, to Koszul duality of the concerned
operads.
\medskip

\subsubsection{Description}
A {\em thin forest poset} is a forest poset avoiding the pattern
$\LinePattern\; \LinePattern$. In other words, a thin forest poset is a
poset so that the nonplanar rooted tree $\Tfr$ obtained by adding a
(new) root to the Hasse diagram of $\Pca$ has the following property.
Any node $x$ of $\Tfr$ has at most one child $y$ such that the bottom
subtree of $\Tfr$ rooted at $y$ has two nodes or more. For instance, any
forest poset admitting the Hasse diagram
\begin{equation} \label{equ:example_thin_forest_poset}
    \begin{tikzpicture}
        [baseline=(current bounding box.center),xscale=.45,yscale=.35]
        \node[Vertex](1)at(3,0){};
        \node[Vertex](2)at(4,0){};
        \node[Vertex](3)at(5.5,0){};
        \node[Vertex](4)at(5,-1){};
        \node[Vertex](5)at(6,-1){};
        \node[Vertex](6)at(6,-2){};
        \node[Vertex](7)at(6,-3){};
        \node[Vertex](8)at(5,-4){};
        \node[Vertex](9)at(6,-4){};
        \node[Vertex](A)at(7,-4){};
        \node[Vertex](B)at(7,-5){};
        \draw[Edge](3)--(4);
        \draw[Edge](3)--(5);
        \draw[Edge](5)--(6);
        \draw[Edge](6)--(7);
        \draw[Edge](7)--(8);
        \draw[Edge](7)--(9);
        \draw[Edge](7)--(A);
        \draw[Edge](A)--(B);
    \end{tikzpicture}
\end{equation}
is a thin forest poset while any forest poset admitting the Hasse diagram
\begin{equation}
    \begin{tikzpicture}
        [baseline=(current bounding box.center),xscale=.45,yscale=.35]
        \node[Vertex](1)at(0,0){};
        \node[Vertex](2)at(1.75,0){};
        \node[Vertex](3)at(1,-1){};
        \node[Vertex](4)at(2.5,-1){};
        \node[Vertex](5)at(1.75,-2){};
        \node[Vertex](6)at(1.75,-3){};
        \node[Vertex](7)at(3.25,-2){};
        \node[Vertex](8)at(2.75,-3){};
        \node[Vertex](9)at(3.75,-3){};
        \draw[Edge](2)--(3);
        \draw[Edge](2)--(4);
        \draw[Edge](4)--(5);
        \draw[Edge](4)--(7);
        \draw[Edge](5)--(6);
        \draw[Edge](7)--(8);
        \draw[Edge](7)--(9);
    \end{tikzpicture}
\end{equation}
is not.
\medskip

The {\em standard labeling} of a thin forest poset $\Pca$ consists in
labeling the vertices of the Hasse diagram of $\Pca$ from $1$ to
$\# \Pca$ in the order they appear in a depth first traversal, by always
visiting in a same sibling the node with the biggest subtree as last.
For instance, the standard labeling of a poset
admitting~\eqref{equ:example_thin_forest_poset} as Hasse diagram
produces the Hasse diagram
\begin{equation}
    \begin{tikzpicture}
        [baseline=(current bounding box.center),xscale=.65,yscale=.45]
        \node[Vertex](1)at(3,0){\begin{math}1\end{math}};
        \node[Vertex](2)at(4,0){\begin{math}2\end{math}};
        \node[Vertex](3)at(5.5,0){\begin{math}3\end{math}};
        \node[Vertex](4)at(5,-1){\begin{math}4\end{math}};
        \node[Vertex](5)at(6,-1){\begin{math}5\end{math}};
        \node[Vertex](6)at(6,-2){\begin{math}6\end{math}};
        \node[Vertex](7)at(6,-3){\begin{math}7\end{math}};
        \node[Vertex](8)at(5,-4){\begin{math}8\end{math}};
        \node[Vertex](9)at(6,-4){\begin{math}9\end{math}};
        \node[Vertex](A)at(7,-4){\tiny \begin{math}10\end{math}};
        \node[Vertex](B)at(7,-5){\tiny \begin{math}11\end{math}};
        \draw[Edge](3)--(4);
        \draw[Edge](3)--(5);
        \draw[Edge](5)--(6);
        \draw[Edge](6)--(7);
        \draw[Edge](7)--(8);
        \draw[Edge](7)--(9);
        \draw[Edge](7)--(A);
        \draw[Edge](A)--(B);
    \end{tikzpicture}\,.
\end{equation}
In what follows, we shall consider only standardly labeled thin forest
posets and we shall identify any element $x$ of a thin forest posets
$\Pca$ as the label of $x$ in the standard labeling of $\Pca$. Moreover,
we shall see thin forest posets as forests of nonplanar rooted trees,
obtained by considering Hasse diagrams of these posets.
\medskip

Thin forest posets admit the following recursive description. If $\Pca$
is a thin forest poset, then $\Pca$ is the empty forest $\PosetEmpty$,
or it is the forest
\begin{equation}
    \raisebox{.25em}{\OnePoset} \; \Pca'
\end{equation}
consisting in the tree of one node $\raisebox{.25em}{\OnePoset}$
(labeled by $1$) and a thin forest poset $\Pca'$, or it is the forest
\begin{equation}
    \begin{tikzpicture}[baseline=(current bounding box.center),yscale=.6]
        \node[Vertex](1)at(0,0){};
        \node(2)at(0,-1){\begin{math}\Pca'\end{math}};
        \draw[Edge](1)--(2);
    \end{tikzpicture}
\end{equation}
consisting in one root (labeled by $1$) attached to the roots of the
trees of the thin forest poset~$\Pca'$. Therefore, there are $2^{n - 1}$
thin forest posets of size $n \geq 1$.
\medskip

\subsubsection{Duality}
Given a thin forest poset $\Pca$, the {\em dual} of $\Pca$ is the poset
$\Pca^\perp$ such that, for all $a, b \in \Pca^\perp$,
$a \Ord_{\Pca^\perp} b$ if and only if $a = b$ or $a$ and $b$ are
incomparable in $\Pca$ and $a < b$. For instance, consider the poset
\begin{equation}
    \Pca :=
    \begin{tikzpicture}
        [baseline=(current bounding box.center),xscale=.45,yscale=.42]
        \node[Vertex](1)at(0,2){\begin{math}1\end{math}};
        \node[Vertex](2)at(2,2){\begin{math}2\end{math}};
        \node[Vertex](3)at(1,1){\begin{math}3\end{math}};
        \node[Vertex](4)at(2,1){\begin{math}4\end{math}};
        \node[Vertex](5)at(3,1){\begin{math}5\end{math}};
        \node[Vertex](6)at(3,0){\begin{math}6\end{math}};
        \draw[Edge](2)--(3);
        \draw[Edge](2)--(4);
        \draw[Edge](2)--(5);
        \draw[Edge](5)--(6);
    \end{tikzpicture}\,.
\end{equation}
Since $1 \not \Ord_\Pca 2$, $1 \not \Ord_\Pca 3$, $1 \not \Ord_\Pca 4$,
$1 \not \Ord_\Pca 5$, $1 \not \Ord_\Pca 6$, $3 \not \Ord_\Pca 4$,
$3 \not \Ord_\Pca 5$, $3 \not \Ord_\Pca 6$, $4 \not \Ord_\Pca 5$,
$4 \not \Ord_\Pca 6$, in the dual $\Pca^\perp$ of $\Pca$ we have
$1 \Ord_{\Pca^\perp} 2$, $1 \Ord_{\Pca^\perp} 3$, $1 \Ord_{\Pca^\perp} 4$,
$1 \Ord_{\Pca^\perp} 5$, $1 \Ord_{\Pca^\perp} 6$, $3 \Ord_{\Pca^\perp} 4$,
$3 \Ord_{\Pca^\perp} 5$, $3 \Ord_{\Pca^\perp} 6$, $4 \Ord_{\Pca^\perp} 5$,
$4 \Ord_{\Pca^\perp} 6$ and hence,
\begin{equation}
    \Pca^\perp =
    \begin{tikzpicture}
        [baseline=(current bounding box.center),xscale=.45,yscale=.42]
        \node[Vertex](1)at(.25,3){\begin{math}1\end{math}};
        \node[Vertex](2)at(-.5,2){\begin{math}2\end{math}};
        \node[Vertex](3)at(1,2){\begin{math}3\end{math}};
        \node[Vertex](4)at(1,1){\begin{math}4\end{math}};
        \node[Vertex](5)at(0.5,0){\begin{math}5\end{math}};
        \node[Vertex](6)at(1.5,0){\begin{math}6\end{math}};
        \draw[Edge](1)--(2);
        \draw[Edge](1)--(3);
        \draw[Edge](3)--(4);
        \draw[Edge](4)--(5);
        \draw[Edge](4)--(6);
    \end{tikzpicture}\,.
\end{equation}
Observe that this operation $^\perp$ is an involution on thin forest
posets.
\medskip

We now state two lemmas about thin forest posets and the
operation~$^\perp$.
\medskip

\begin{Lemma} \label{lem:recursive_description_dual_poset}
    Let $\Pca$ be a thin forest poset. The dual $\Pca^\perp$ of $\Pca$
    admits the following recursive expression:
    \begin{enumerate}[label={\it (\roman*)}]
        \item if $\Pca$ is the empty forest $\PosetEmpty$, then
        \begin{equation}
            \PosetEmpty^\perp = \PosetEmpty;
        \end{equation}
        \item if $\Pca$ is of the form
        $\Pca = \raisebox{.25em}{\OnePoset} \; \Pca'$ where $\Pca'$ is
        a thin forest poset, then
        \begin{equation}
            \left(\raisebox{.25em}{\OnePoset} \; \Pca'\right)^\perp =
            \begin{tikzpicture}
                [baseline=(current bounding box.center),yscale=.6]
                \node[Vertex](1)at(0,0){};
                \node(2)at(0,-1){\begin{math}{\Pca'}^\perp\end{math}};
                \draw[Edge](1)--(2);
            \end{tikzpicture};
        \end{equation}
        \item otherwise, $\Pca$ is of the form
        \begin{math}
            \Pca =
            \begin{tikzpicture}
                [baseline=(current bounding box.center),yscale=.4]
                \node[Vertex](1)at(0,0){};
                \node(2)at(0,-1){\footnotesize\begin{math}\Pca'\end{math}};
                \draw[Edge](1)--(2);
            \end{tikzpicture}
        \end{math}
        where $\Pca'$ is a thin forest poset, and then
        \begin{equation}
            \left(
            \begin{tikzpicture}
                [baseline=(current bounding box.center),yscale=.6]
                \node[Vertex](1)at(0,0){};
                \node(2)at(0,-1){\begin{math}\Pca'\end{math}};
                \draw[Edge](1)--(2);
            \end{tikzpicture}
            \right)^\perp
            =
            \raisebox{.25em}{\OnePoset} \; {\Pca'}^\perp.
        \end{equation}
    \end{enumerate}
\end{Lemma}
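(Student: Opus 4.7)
The plan is to proceed by case analysis matching exactly the three cases of the recursive description of thin forest posets given just before the statement. Case (i) is immediate: the empty poset has no relations, so its dual, still a poset on the empty set, equals $\PosetEmpty$.

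For case (ii), I would write $\Pca = \raisebox{.25em}{\OnePoset} \; \Pca'$. First I note that the isolated top-level vertex receives the label $1$ under the standard labeling (its subtree has size $1$, so it is visited first among the children of the virtual root), while the elements of $\Pca'$ receive labels $2, \dots, \# \Pca$ preserving the relative order of the standard labeling of $\Pca'$—this is just the statement that a depth-first traversal of $\Pca$ that skips the isolated vertex first coincides, up to the global shift $+1$, with the one of $\Pca'$. Then, since $1$ is incomparable to every other element of $\Pca$ and is the smallest label, the definition of $^\perp$ gives $1 \Ord_{\Pca^\perp} b$ for all $b \in \Pca$, so $1$ is a global minimum of $\Pca^\perp$. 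For two elements $a, b \in \{2, \dots, \# \Pca\}$, they are incomparable in $\Pca$ if and only if they are incomparable as elements of $\Pca'$, and $a < b$ holds after the shift if and only if it holds in $\Pca'$; hence the subposet of $\Pca^\perp$ induced on $\{2, \dots, \# \Pca\}$ is, up to the label shift, equal to ${\Pca'}^\perp$. This gives exactly the claimed form.

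For case (iii), $\Pca$ consists of a single tree whose root gets label $1$ under the standard labeling (it is the only top-level node and is visited first in the traversal). Now $1$ is comparable to every other element of $\Pca$, so $1 \not \Ord_{\Pca^\perp} b$ for all $b \neq 1$, which makes $1$ an isolated vertex of $\Pca^\perp$. The same argument as in case (ii) shows that the restriction of $\Pca^\perp$ to $\{2, \dots, \# \Pca\}$ reproduces ${\Pca'}^\perp$ up to the shift. Combining the two pieces yields $\Pca^\perp = \raisebox{.25em}{\OnePoset} \; {\Pca'}^\perp$. The only delicate point in the whole argument is the compatibility of the standard labeling of $\Pca$ with that of $\Pca'$, but since the rule visits the largest sibling subtree last and is otherwise determined by the tree structure, this compatibility is direct from the traversal definition; I see no substantial obstacle beyond being pedantic about this relabeling bookkeeping.
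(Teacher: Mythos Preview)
Your proof is correct and follows essentially the same approach as the paper. The paper introduces an auxiliary notation $^\vee$ for the recursively defined operation and proves $^\vee = {}^\perp$ by structural induction, whereas you directly verify in each case that $\Pca^\perp$ has the claimed form; the underlying reasoning (element $1$ is a global minimum in $\Pca^\perp$ in case~(ii), isolated in case~(iii), and the restriction to $\{2,\dots,\#\Pca\}$ recovers ${\Pca'}^\perp$ after the label shift) is identical, and your version is in fact more explicit about the relabeling bookkeeping.
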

\begin{proof}
    We denote by $^\vee$ the operation on thin forest posets defined
    in the statement of the lemma. To prove that $^\vee = ^\perp$, we
    proceed by structural induction on $\Pca$. If $\Pca = \PosetEmpty$,
    we have
    \begin{math}
        \Pca^\perp = \PosetEmpty^\perp =
        \PosetEmpty = \PosetEmpty^\vee
        = \Pca^\vee.
    \end{math}
    Else, if $\Pca$ is of the form
    $\Pca = \raisebox{.25em}{\OnePoset} \; \Pca'$ where $\Pca'$ is
    a thin forest poset, by definition of $^\perp$, since
    $1 \Ord_{\Pca} a$ implies $a = 1$,
    we have $1 \Ord_{\Pca^\perp} a$ for all $a \in \Pca^\perp$. Now,
    by definition of $^\vee$, we have
    \begin{equation}
        \Pca^\vee =
        \left(\raisebox{.25em}{\OnePoset} \; \Pca'\right)^\vee =
        \begin{tikzpicture}[baseline=(current bounding box.center),yscale=.6]
            \node[Vertex](1)at(0,0){};
            \node(2)at(0,-1){\begin{math}{\Pca'}^\vee\end{math}};
            \draw[Edge](1)--(2);
        \end{tikzpicture}.
    \end{equation}
    Then, since $1 \Ord_{\Pca^\vee} a$ for all $a \in \Pca^\vee$
    and, by induction hypothesis, ${\Pca'}^\perp = {\Pca'}^\vee$, we
    obtain that $\Pca^\perp = \Pca^\vee$.
    Otherwise, $\Pca$ is of the form
    \begin{math}
        \Pca =
        \begin{tikzpicture}[baseline=(current bounding box.center),yscale=.4]
            \node[Vertex](1)at(0,0){};
            \node(2)at(0,-1){\footnotesize\begin{math}\Pca'\end{math}};
            \draw[Edge](1)--(2);
        \end{tikzpicture}
    \end{math}
    where $\Pca'$ is a thin forest poset. By definition of $\perp$,
    since $1 \Ord_\Pca a$ for all $a \in \Pca$, we have
    $1 \Ord_{\Pca^\perp} a$ implies $a = 1$. Now, by definition of
    $\vee$, we have
    \begin{equation}
        \Pca^\vee =
        \left(
        \begin{tikzpicture}
            [baseline=(current bounding box.center),yscale=.6]
            \node[Vertex](1)at(0,0){};
            \node(2)at(0,-1){\begin{math}\Pca'\end{math}};
            \draw[Edge](1)--(2);
        \end{tikzpicture}
        \right)^\vee
        =
        \raisebox{.25em}{\OnePoset} \; {\Pca'}^\vee.
    \end{equation}
    Then, since $1 \Ord_{\Pca^\vee} a$ implies $a = 1$ and, by
    induction hypothesis, ${\Pca'}^\perp = {\Pca'}^\vee$, we obtain
    that $\Pca^\perp = \Pca^\vee$.
\end{proof}
\medskip

\begin{Lemma} \label{lem:relation_intervals_poset_dual}
    Let $\Pca$ be a thin forest poset. Then, the number of intervals of
    $\Pca$ and the number of intervals of its dual are related by
    \begin{equation}
        \NbInterv(\Pca) + \NbInterv\left(\Pca^\perp\right) =
        \frac{(\# \Pca)^2 + 3\, \# \Pca}{2}.
    \end{equation}
\end{Lemma}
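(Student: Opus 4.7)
The plan is to verify the identity by a straightforward double count of the ordered pairs in $\Pca \times \Pca$, with no appeal to the thin forest structure beyond its role in ensuring that $\Pca^\perp$ is a well-defined poset. Setting $n := \#\Pca$, the idea is to partition the $n^2$ ordered pairs $(a, b)$ into three classes: the $n$ diagonal pairs, the ordered pairs of distinct comparable elements, and the ordered pairs of distinct incomparable elements. Since $\NbInterv(\Pca)$ counts all $(a, b)$ with $a \Ord_\Pca b$, including the $n$ diagonal pairs, there are $\NbInterv(\Pca) - n$ strict comparisons $a \OrdStrict_\Pca b$; by symmetry, the number of ordered pairs of distinct comparable elements is $2(\NbInterv(\Pca) - n)$. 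Subtraction then yields that the number $I$ of ordered pairs of distinct incomparable elements is $I = n^2 + n - 2\NbInterv(\Pca)$.

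Next I would rewrite $\NbInterv(\Pca^\perp)$ directly from the definition of the dual. The relation $a \Ord_{\Pca^\perp} b$ holds exactly when $a = b$, contributing $n$ pairs, or when $a$ and $b$ are incomparable in $\Pca$ with $a < b$. Because incomparability in $\Pca$ is symmetric and the condition $a < b$ selects exactly one representative out of each unordered pair $\{a, b\}$ of distinct incomparable elements, the number of such strict pairs is $I/2$. This gives
\begin{equation*}
    \NbInterv(\Pca^\perp) = n + \frac{I}{2} = n + \frac{n^2 + n - 2\NbInterv(\Pca)}{2} = \frac{n^2 + 3n - 2\NbInterv(\Pca)}{2}.
\end{equation*}
Adding $\NbInterv(\Pca)$ to both sides produces the claimed formula, with the $\NbInterv(\Pca)$ terms cancelling.

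The argument contains essentially no obstacle: the thin forest hypothesis enters only implicitly, to guarantee that the binary relation defining $\Pca^\perp$ is transitive (and hence a partial order) so that $\NbInterv(\Pca^\perp)$ is meaningful in the first place. The only point deserving a short justification is the passage from unordered to ordered counting of incomparable pairs via the involution $(a, b) \leftrightarrow (b, a)$, which is immediate.
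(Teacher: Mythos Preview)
Your proof is correct and takes a genuinely different route from the paper. The paper proves the identity by structural induction on thin forest posets, using the recursive description of Lemma~\ref{lem:recursive_description_dual_poset}: it checks the base case $\Pca = \emptyset$ and then, in each of the two recursive cases, computes $\NbInterv(\Pca)$ and $\NbInterv(\Pca^\perp)$ in terms of $\NbInterv(\Pca')$, $\NbInterv({\Pca'}^\perp)$, and $\#\Pca'$, and applies the induction hypothesis. Your argument bypasses all of this: you read $\NbInterv(\Pca^\perp)$ directly off the defining relation of~$\Pca^\perp$, partition $\Pca \times \Pca$ into diagonal, comparable, and incomparable ordered pairs, and finish with a one-line algebraic manipulation. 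Your approach is shorter and more conceptual, and it makes transparent that the identity is really just the statement that the strict relations of $\Pca$ and $\Pca^\perp$ partition the set of two-element subsets of $\Pca$; the thin forest hypothesis is needed only so that $\Pca^\perp$ is actually a poset and $\NbInterv(\Pca^\perp)$ makes sense. The paper's inductive proof, while longer, has the virtue of staying within the recursive framework that is used throughout Section~\ref{sec:thin_forest_posets} and of exercising Lemma~\ref{lem:recursive_description_dual_poset}.
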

\begin{proof}
    We proceed by structural induction on $\Pca$. If $\Pca = \PosetEmpty$,
    then $\Pca^\perp = \Pca$ and
    $\NbInterv(\Pca) + \NbInterv\left(\Pca^\perp\right) = 0$, so that
    the statement of the lemma is satisfied. Else, if $\Pca$ is of the
    form $\Pca = \raisebox{.25em}{\OnePoset} \; \Pca'$ where $\Pca'$ is
    a thin forest poset, by
    Lemma~\ref{lem:recursive_description_dual_poset},
    \begin{equation}
        \left(\raisebox{.25em}{\OnePoset} \; \Pca'\right)^\perp =
        \begin{tikzpicture}[baseline=(current bounding box.center),yscale=.6]
            \node[Vertex](1)at(0,0){};
            \node(2)at(0,-1){\begin{math}{\Pca'}^\perp\end{math}};
            \draw[Edge](1)--(2);
        \end{tikzpicture}\,.
    \end{equation}
    Now, we have
    \begin{equation}
        \NbInterv(\Pca) =
        \NbInterv\left(\raisebox{.25em}{\OnePoset} \; \Pca'\right) =
        1 + \NbInterv(\Pca')
    \end{equation}
    and
    \begin{equation}
        \NbInterv(\Pca^\perp) =
        \NbInterv\left(
        \begin{tikzpicture}
                [baseline=(current bounding box.center),yscale=.6]
            \node[Vertex](1)at(0,0){};
            \node(2)at(0,-1){\begin{math}{\Pca'}^\perp\end{math}};
            \draw[Edge](1)--(2);
        \end{tikzpicture}
        \right) =
        1 + \NbInterv\left({\Pca'}^\perp\right) + \# {\Pca'}^\perp.
    \end{equation}
    By induction hypothesis, we obtain that
    $\Pca$ satisfies the statement of the lemma. Otherwise, $\Pca$ is of
    the form
    \begin{math}
        \Pca =
        \begin{tikzpicture}
                [baseline=(current bounding box.center),yscale=.4]
            \node[Vertex](1)at(0,0){};
            \node(2)at(0,-1){\footnotesize\begin{math}\Pca'\end{math}};
            \draw[Edge](1)--(2);
        \end{tikzpicture}
    \end{math}
    where $\Pca'$ is a thin forest poset. By
    Lemma~\ref{lem:recursive_description_dual_poset},
    \begin{equation}
        \Pca^\perp =
        \left(
        \begin{tikzpicture}
                [baseline=(current bounding box.center),yscale=.6]
            \node[Vertex](1)at(0,0){};
            \node(2)at(0,-1){\begin{math}\Pca'\end{math}};
            \draw[Edge](1)--(2);
        \end{tikzpicture}
        \right)^\perp
        =
        \raisebox{.25em}{\OnePoset} \; {\Pca'}^\perp,
    \end{equation}
    so that
    \begin{equation}
        \NbInterv(\Pca) =
        \NbInterv\left(
        \begin{tikzpicture}
                [baseline=(current bounding box.center),yscale=.6]
            \node[Vertex](1)at(0,0){};
            \node(2)at(0,-1){\begin{math}{\Pca'}\end{math}};
            \draw[Edge](1)--(2);
        \end{tikzpicture}
        \right) =
        1 + \NbInterv\left({\Pca'}\right) + \# {\Pca'}
    \end{equation}
    and
    \begin{equation}
        \NbInterv(\Pca^\perp) =
        \NbInterv\left(\raisebox{.25em}{\OnePoset} \; {\Pca'}^\perp\right)
        = 1 + \NbInterv\left({\Pca'}^\perp\right).
    \end{equation}
    By using the same arguments as those used for the previous case, the
    statement of the lemma is established.
\end{proof}
\medskip

\subsection{Koszul duality and poset duality}
By defining here an alternative basis for $\As(\Pca)$ when $\Pca$ is a
thin forest poset, we show that the construction $\As$ is closed under
Koszul duality on thin forest posets. More precisely, we show that
$\As(\Pca)^!$ and $\As(\Pca^\perp)$ are two isomorphic operads.
\medskip

\subsubsection{Alternative basis}
Let $\Pca$ be a thin forest poset. For any element $b$ of $\Pca$, let
$\OpDualA_b$ be the element of $\Free(\FreeGen_\Pca^{\OpDualB})(2)$
defined by
\begin{equation} \label{equ:basis_change_dual_thin_forest_poset}
    \OpDualA_b :=
    \sum_{\substack{a \in \Pca^\perp \\ a \Ord_{\Pca^\perp} b}}
    \OpDualB_a.
\end{equation}
We denote by $\FreeGen_\Pca^{\OpDualA}$ the set of all $\OpDualA_b$,
$b \in \Pca$. By triangularity, the family $\FreeGen_\Pca^{\OpDualA}$
forms a basis of $\Free(\FreeGen_\Pca^{\OpDualB})(2)$ and hence,
generates $\As(\Pca)^!$. Consider for instance the thin forest poset
\begin{equation} \label{equ:thin_forest_poset_example}
    \Pca :=
    \begin{tikzpicture}
        [baseline=(current bounding box.center),xscale=.45,yscale=.42]
        \node[Vertex](1)at(0,0){\begin{math}1\end{math}};
        \node[Vertex](2)at(1,0){\begin{math}2\end{math}};
        \node[Vertex](3)at(2.75,0){\begin{math}3\end{math}};
        \node[Vertex](4)at(2.25,-1){\begin{math}4\end{math}};
        \node[Vertex](5)at(3.25,-1){\begin{math}5\end{math}};
        \node[Vertex](6)at(3.25,-2){\begin{math}6\end{math}};
        \draw[Edge](3)--(4);
        \draw[Edge](3)--(5);
        \draw[Edge](5)--(6);
    \end{tikzpicture}\,.
\end{equation}
The dual poset of $\Pca$ is
\begin{equation} \label{equ:thin_forest_poset_dual_example}
    \Pca^\perp =
    \begin{tikzpicture}
        [baseline=(current bounding box.center),xscale=.45,yscale=.42]
        \node[Vertex](1)at(0,0){\begin{math}1\end{math}};
        \node[Vertex](2)at(0,-1){\begin{math}2\end{math}};
        \node[Vertex](3)at(-1,-2){\begin{math}3\end{math}};
        \node[Vertex](4)at(1,-2){\begin{math}4\end{math}};
        \node[Vertex](5)at(.5,-3){\begin{math}5\end{math}};
        \node[Vertex](6)at(1.5,-3){\begin{math}6\end{math}};
        \draw[Edge](1)--(2);
        \draw[Edge](2)--(3);
        \draw[Edge](2)--(4);
        \draw[Edge](4)--(5);
        \draw[Edge](4)--(6);
    \end{tikzpicture}
\end{equation}
and hence, the elements of $\FreeGen_\Pca^{\OpDualA}$ express as
\begin{subequations}
\begin{equation}
    \OpDualA_1 = \OpDualB_1,
\end{equation}
\begin{equation}
    \OpDualA_2 = \OpDualB_1 + \OpDualB_2,
\end{equation}
\begin{equation}
    \OpDualA_3 = \OpDualB_1 + \OpDualB_2 + \OpDualB_3,
\end{equation}
\begin{equation}
    \OpDualA_4 = \OpDualB_1 + \OpDualB_2 + \OpDualB_4,
\end{equation}
\begin{equation}
    \OpDualA_5 = \OpDualB_1 + \OpDualB_2 + \OpDualB_4 + \OpDualB_5,
\end{equation}
\begin{equation}
    \OpDualA_6 = \OpDualB_1 + \OpDualB_2 + \OpDualB_4 + \OpDualB_6.
\end{equation}
\end{subequations}
\medskip

\begin{Lemma} \label{lem:dimensions_relations_thin_forest_poset}
    Let $\Pca$ be a thin forest poset. Then, the dimension of the space
    $\FreeRel_{\Pca^\perp}^\Op$ of relations of $\As(\Pca^\perp)$ and
    the dimension of the space $\FreeRel_\Pca^{\OpDual}$ of relations of
    $\As(\Pca)^!$ are related by
    \begin{equation} \label{equ:dimensions_relations_thin_forest_poset}
        \dim \FreeRel_{\Pca^\perp}^\Op =
        4 \, \NbInterv\left(\Pca^\perp\right) - 3 \, \# \Pca =
        \dim \FreeRel_\Pca^{\OpDual}.
    \end{equation}
\end{Lemma}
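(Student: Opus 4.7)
The plan is to observe that the statement decomposes into two independent equalities, and that each of them is essentially a direct bookkeeping computation that pulls together results already established.

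First, I would handle the left-hand equality, $\dim \FreeRel_{\Pca^\perp}^\Op = 4\,\NbInterv(\Pca^\perp) - 3\,\#\Pca$. This is simply Proposition~\ref{prop:dimension_relations} applied to the poset $\Pca^\perp$ in place of $\Pca$, together with the trivial observation that $\#\Pca^\perp = \#\Pca$ since $^\perp$ only modifies the order relation, not the underlying set. No structural properties of $\Pca$ beyond being a thin forest poset are needed here.

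Second, I would tackle the right-hand equality, $4\,\NbInterv(\Pca^\perp) - 3\,\#\Pca = \dim \FreeRel_\Pca^{\OpDual}$. By Proposition~\ref{prop:dimension_relations_dual}, the right-hand side equals $2(\#\Pca)^2 + 3\,\#\Pca - 4\,\NbInterv(\Pca)$. So after rearranging, the desired equality is equivalent to
\begin{equation}
    \NbInterv(\Pca) + \NbInterv(\Pca^\perp) = \frac{(\#\Pca)^2 + 3\,\#\Pca}{2},
\end{equation}
which is exactly Lemma~\ref{lem:relation_intervals_poset_dual}. I would verify the arithmetic explicitly and cite the lemma.

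There is essentially no obstacle: the statement is a direct combination of Proposition~\ref{prop:dimension_relations} (applied to $\Pca^\perp$), Proposition~\ref{prop:dimension_relations_dual} (applied to $\Pca$), and Lemma~\ref{lem:relation_intervals_poset_dual}. The role of the thin forest hypothesis is only to ensure that the identity from Lemma~\ref{lem:relation_intervals_poset_dual} is available, since that lemma requires $\Pca^\perp$ to be well-defined (which in turn requires $\Pca$ to be a thin forest poset). The proof is therefore a short chain of substitutions with no case analysis or inductive step.
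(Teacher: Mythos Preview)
Your proposal is correct and follows essentially the same approach as the paper: the first equality is Proposition~\ref{prop:dimension_relations} applied to $\Pca^\perp$ (using $\#\Pca^\perp = \#\Pca$), and the second equality combines Proposition~\ref{prop:dimension_relations_dual} with Lemma~\ref{lem:relation_intervals_poset_dual} via the same arithmetic rearrangement.
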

\begin{proof}
    First, by Proposition~\ref{prop:dimension_relations}, the first
    equality of~\eqref{equ:dimensions_relations_thin_forest_poset} is
    established.
    \smallskip

    Moreover, by Proposition~\ref{prop:dimension_relations_dual} and by
    Lemma~\ref{lem:relation_intervals_poset_dual}, we have
    \begin{equation}\begin{split}
        \dim \FreeRel_\Pca^{\OpDual} & =
            2\, (\# \Pca)^2 + 3\, \# \Pca - 4\, \NbInterv(\Pca) \\
        & = 2\, (\# \Pca)^2 + 3\, \# \Pca -
            4 \frac{(\# \Pca)^2 + 3\, \# \Pca}{2}
            + 4\, \NbInterv\left(\Pca^\perp\right) \\
        & = 4\, \NbInterv\left(\Pca^\perp\right) - 3 \, \# \Pca,
    \end{split}\end{equation}
    establishing the second equality
    of~\eqref{equ:dimensions_relations_thin_forest_poset} and implying
    the statement of the lemma.
\end{proof}
\medskip

\subsubsection{Isomorphism}

\begin{Theorem} \label{thm:isomorphism_thin_forests_dual_operad}
    Let $\Pca$ be a thin forest poset. Then, the map
    $\phi : \As(\Pca^\perp) \to \As(\Pca)^!$ defined for any
    $a \in \Pca^\perp$ by $\phi(\Op_a) := \OpDualA_a$ extends in a
    unique way to an isomorphism of operads.
\end{Theorem}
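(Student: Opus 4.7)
The plan is to prove that the linear map $\tilde\phi_2 : \Free(\FreeGen_{\Pca^\perp}^\Op)(3) \to \Free(\FreeGen_\Pca^{\OpDualB})(3)$ defined on degree-$2$ trees by $\tilde\phi_2(\Op_a \circ_i \Op_b) := \OpDualA_a \circ_i \OpDualA_b$ sends the space of relations $\FreeRel_{\Pca^\perp}^\Op$ of $\As(\Pca^\perp)$ into the space of relations $\FreeRel_\Pca^{\OpDualB}$ of $\As(\Pca)^!$. Once this is established, three observations combine to yield the theorem: the change of basis~\eqref{equ:basis_change_dual_thin_forest_poset} on generators is triangular, so $\tilde\phi_2$ is a linear bijection between vector spaces of equal dimension $2(\#\Pca)^2$; by Lemma~\ref{lem:dimensions_relations_thin_forest_poset}, the subspaces $\FreeRel_{\Pca^\perp}^\Op$ and $\FreeRel_\Pca^{\OpDualB}$ have equal dimensions, forcing the inclusion $\tilde\phi_2(\FreeRel_{\Pca^\perp}^\Op) \subseteq \FreeRel_\Pca^{\OpDualB}$ to be an equality; both operads therefore admit the same presentation under the identification $\Op_a \leftrightarrow \OpDualA_a$, so $\phi$ extends uniquely to an operad isomorphism (uniqueness is automatic since $\FreeGen_\Pca^{\OpDualA}$ generates $\As(\Pca)^!$).

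The core technical task is thus to verify, for each pair of elements $a,b$ of $\Pca^\perp$ comparable in $\Pca^\perp$, the two identities
\begin{equation*}
    \OpDualA_a \circ_1 \OpDualA_b = \OpDualA_{a \Min_{\Pca^\perp} b}
    \circ_2 \OpDualA_{a \Min_{\Pca^\perp} b}
    \quad \mbox{and} \quad
    \OpDualA_{a \Min_{\Pca^\perp} b} \circ_1 \OpDualA_{a \Min_{\Pca^\perp} b}
    = \OpDualA_a \circ_2 \OpDualA_b
\end{equation*}
in $\As(\Pca)^!$. Expanding each $\OpDualA_x$ via~\eqref{equ:basis_change_dual_thin_forest_poset} produces double sums of terms of the form $\OpDualB_c \circ_i \OpDualB_d$. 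Proposition~\ref{prop:alternative_presentation_koszul_dual} tells us that such a term vanishes in $\As(\Pca)^!$ whenever $c$ and $d$ are distinct and incomparable in $\Pca$, and that $\OpDualB_c \circ_1 \OpDualB_c = \OpDualB_c \circ_2 \OpDualB_c$ for all $c$. Using the elementary fact that, in any thin forest poset, two distinct elements are incomparable in $\Pca$ if and only if they are comparable in $\Pca^\perp$ (a direct consequence of the definition of the dual poset), most off-diagonal terms in the expansion vanish, while the diagonal terms may be exchanged freely between $\circ_1$ and $\circ_2$.

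The main obstacle is the combinatorial bookkeeping of the surviving terms. After simplification, both sides of each identity should reduce to the same sum of terms $\OpDualB_c \circ_i \OpDualB_d$, indexed by pairs $(c,d)$ lying in explicit downsets of $\Pca^\perp$ and jointly comparable in $\Pca$. To carry out this matching cleanly, I would proceed by structural induction on $\Pca$ using the recursive description of $\Pca^\perp$ furnished by Lemma~\ref{lem:recursive_description_dual_poset}, treating the three cases separately (the empty poset, a disjoint union $\raisebox{.25em}{\OnePoset}\,\Pca'$ with a singleton, and the addition of a new minimum above a thin forest $\Pca'$). The thin-forest hypothesis is essential here: without it, the pattern $\LinePattern\;\LinePattern$ would allow distinct elements $c, d$ that are simultaneously incomparable in $\Pca$ and incomparable in $\Pca^\perp$, producing unmatched nonzero terms on one side of the identity. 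An alternative and more direct route is a case analysis based on whether $a \Ord_{\Pca^\perp} b$ or $b \Ord_{\Pca^\perp} a$ (so that $a \Min_{\Pca^\perp} b$ equals one of $a$ or $b$), followed by a careful accounting of the subcases governing the positions of $c$ and $d$ relative to $a$, $b$, and $a \Min_{\Pca^\perp} b$ in both $\Pca$ and $\Pca^\perp$.
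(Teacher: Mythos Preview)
Your approach is correct and essentially the same as the paper's: show that the generating relations of $\As(\Pca^\perp)$, rewritten in the $\OpDualA$-basis, lie in $\FreeRel_\Pca^{\OpDualB}$, then invoke Lemma~\ref{lem:dimensions_relations_thin_forest_poset} to promote the inclusion to an equality. The paper carries out the verification by the direct computation you list as your ``alternative route'' rather than by structural induction: expanding $\OpDualA_a \circ_1 \OpDualA_b - \OpDualA_a \circ_2 \OpDualA_a$ for $a \Ord_{\Pca^\perp} b$ and applying the relations of Proposition~\ref{prop:alternative_presentation_koszul_dual}, the only surviving terms are the diagonal ones $\OpDualB_{a'} \circ_i \OpDualB_{a'}$ with $a' \Ord_{\Pca^\perp} a$, and these cancel.

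One small correction: your claim that without the thin-forest hypothesis one could have distinct elements simultaneously incomparable in $\Pca$ and in $\Pca^\perp$ is not right, since by the very definition of $\Ord_{\Pca^\perp}$ distinct elements are comparable in $\Pca^\perp$ exactly when they are incomparable in $\Pca$. The hypothesis is used elsewhere: it ensures that $\Pca^\perp$ is again a (thin) forest poset, so that the downset $\{x : x \Ord_{\Pca^\perp} b\}$ is a chain. This is precisely what forces any two indices $a', b'$ appearing in the double sum that are comparable in $\Pca$ (hence incomparable in $\Pca^\perp$) and both $\Ord_{\Pca^\perp} b$ to coincide, collapsing the sum to its diagonal.
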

\begin{proof}
    Let us denote by $\FreeRel_\Pca^{\OpDualA}$ the space of relations
    of $\As(\Pca)^!$, expressed on the generating family
    $\FreeGen_\Pca^{\OpDualA}$. This space is the same as the space
    $\FreeRel_\Pca^{\OpDualB}$, described by
    Proposition~\ref{prop:alternative_presentation_koszul_dual}. Let us
    exhibit a generating family of $\FreeRel_\Pca^{\OpDualA}$ as a vector
    space. For this, let
    $\pi : \Free(\FreeGen_\Pca^{\OpDualB}) \to \As(\Pca)^!$ be the
    canonical projection and let $a$ and $b$ be two elements of
    $\Pca^\perp$ such that $a \Ord_{\Pca^\perp} b$. We have, by
    using~\eqref{equ:basis_change_dual_thin_forest_poset},
    \begin{equation}\begin{split}
    \label{equ:isomorphism_thin_forests_dual_operad_computation_1}
        \pi\left(\OpDualA_a \circ_1 \OpDualA_b
                - \OpDualA_a \circ_2 \OpDualA_a\right) & =
        \sum_{\substack{a', b' \in \Pca^\perp \\
            a' \Ord_{\Pca^\perp} a \\
            b' \Ord_{\Pca^\perp} b}}
        \pi\left(\OpDualB_{a'} \circ_1 \OpDualB_{b'}\right)
        -
        \sum_{\substack{a', a'' \in \Pca^\perp \\
            a' \Ord_{\Pca^\perp} a \\
            a'' \Ord_{\Pca^\perp} a}}
        \pi\left(\OpDualB_{a'} \circ_2 \OpDualB_{a''}\right) \\
        & =
        \sum_{\substack{a' \in \Pca^\perp \\
            a' \Ord_{\Pca^\perp} a}}
        \pi\left(\OpDualB_{a'} \circ_1 \OpDualB_{a'}\right)
        -
        \sum_{\substack{a' \in \Pca^\perp \\
            a' \Ord_{\Pca^\perp} a}}
        \pi\left(\OpDualB_{a'} \circ_2 \OpDualB_{a'}\right) \\
        & = 0.
    \end{split}\end{equation}
    Indeed the second equality
    of~\eqref{equ:isomorphism_thin_forests_dual_operad_computation_1}
    comes, by
    Proposition~\ref{prop:alternative_presentation_koszul_dual}, from the
    presence of the elements~\eqref{equ:alternative_relation_dual_2}
    and~\eqref{equ:alternative_relation_dual_3} in
    $\FreeRel_\Pca^{\OpDualB}$, together with the fact that for all
    comparable elements $a'$ and $b'$ in $\Pca$, the fact that
    $a' \Ord_{\Pca^\perp} a$, $b' \Ord_{\Pca^\perp} b$, and
    $a \Ord_{\Pca^\perp} b$ implies that $a' = b'$. Besides, the last
    equality
    of~\eqref{equ:isomorphism_thin_forests_dual_operad_computation_1}
    comes, by
    Proposition~\ref{prop:alternative_presentation_koszul_dual}, from the
    presence of the elements~\eqref{equ:alternative_relation_dual_1} in
    $\FreeRel_\Pca^{\OpDualB}$. Similar arguments show that
    \begin{subequations}
    \begin{equation}
        \pi\left(\OpDualA_b \circ_1 \OpDualA_a
            - \OpDualA_a \circ_2 \OpDualA_a\right) = 0,
    \end{equation}
    \begin{equation}
        \pi\left(\OpDualA_a \circ_1 \OpDualA_a
            - \OpDualA_b \circ_2 \OpDualA_a\right) = 0,
    \end{equation}
    \begin{equation}
        \pi\left(\OpDualA_a \circ_1 \OpDualA_a
            - \OpDualA_a \circ_2 \OpDualA_b\right) = 0.
    \end{equation}
    \end{subequations}
    \smallskip

    We then have shown that the elements
    \begin{subequations}
    \begin{equation}
        \label{equ:isomorphism_thin_forests_dual_operad_generator_1}
        \OpDualA_a \circ_1 \OpDualA_b
        -
        \OpDualA_{a \Min_{\Pca^\perp} b} \circ_2
                \OpDualA_{a \Min_{\Pca^\perp} b},
        \qquad a, b \in {\Pca^\perp} \mbox{ and }
            (a \Ord_{\Pca^\perp} b \mbox{ or } b \Ord_{\Pca^\perp} a),
    \end{equation}
    \begin{equation}
        \label{equ:isomorphism_thin_forests_dual_operad_generator_2}
        \OpDualA_{a \Min_{\Pca^\perp} b} \circ_1
                \OpDualA_{a \Min_{\Pca^\perp} b}
        -
        \OpDualA_a \circ_2 \OpDualA_b,
        \qquad  a, b \in {\Pca^\perp} \mbox{ and }
            (a \Ord_{\Pca^\perp} b \mbox{ or } b \Ord_{\Pca^\perp} a).
    \end{equation}
    \end{subequations}
    are in $\FreeRel_\Pca^{\OpDualB}$. It is immediate that the family
    consisting in the
    elements~\eqref{equ:isomorphism_thin_forests_dual_operad_generator_1}
    and~\eqref{equ:isomorphism_thin_forests_dual_operad_generator_2} is
    free. We denote by $\FreeRel$ the vector space generated by this
    family. By using the same arguments as the ones used in the proof
    of Proposition~\ref{prop:dimension_relations}, we obtain that the
    dimension of $\FreeRel$ is
    \begin{equation}
        \dim \FreeRel =
        4 \, \NbInterv\left(\Pca^\perp\right) - 3 \, \# \Pca^\perp.
    \end{equation}
    Now, by Lemma~\ref{lem:dimensions_relations_thin_forest_poset},
    we deduce that
    \begin{math}
        \dim \FreeRel =
        \dim \FreeRel_\Pca^{\OpDual} =
        \dim \FreeRel_\Pca^{\OpDualB},
    \end{math}
    implying that $\FreeRel$ and $\FreeRel_\Pca^{\OpDualB}$ are equal.
    \smallskip

    Therefore, the family of the $\FreeGen_\Pca^{\OpDualA}$ generating
    $\As(\Pca)^!$ is submitted to the same relations as the family of
    the $\FreeGen_\Pca^\Op$ generating $\As(\Pca^\perp)$
    (compare~\eqref{equ:isomorphism_thin_forests_dual_operad_generator_1}
    with~\eqref{equ:relation_1}
    and~\eqref{equ:isomorphism_thin_forests_dual_operad_generator_2}
    with~\eqref{equ:relation_2}). Whence the statement of the theorem.
\end{proof}
\medskip

The isomorphism $\phi$ between $\As(\Pca^\perp)$ and $\As(\Pca)^!$
provided by Theorem~\ref{thm:isomorphism_thin_forests_dual_operad}
expresses from the generating family $\FreeGen_{\Pca^\perp}^\Op$ of
$\As(\Pca^\perp)$ to the generating family $\FreeGen_\Pca^{\OpDual}$ of
$\As(\Pca)^!$, for any $b \in \Pca^\perp$, as
\begin{equation}
    \phi(\Op_b) =
    \sum_{\substack{a \in \Pca^\perp \\ a \Ord_{\Pca^\perp} b}} \;
    \sum_{\substack{c \in \Pca \\ a \Ord_\Pca c}} \OpDual_c.
\end{equation}
\medskip

For instance, by considering the pair of thin forest posets in duality
\begin{equation}
    \left(\Pca, \Pca^\perp\right) =
    \left(
    \begin{tikzpicture}
        [baseline=(current bounding box.center),xscale=.45,yscale=.42]
        \node[Vertex](1)at(0,0){\begin{math}1\end{math}};
        \node[Vertex](2)at(1,0){\begin{math}2\end{math}};
        \node[Vertex](3)at(2.75,0){\begin{math}3\end{math}};
        \node[Vertex](4)at(2.25,-1){\begin{math}4\end{math}};
        \node[Vertex](5)at(3.25,-1){\begin{math}5\end{math}};
        \node[Vertex](6)at(3.25,-2){\begin{math}6\end{math}};
        \draw[Edge](3)--(4);
        \draw[Edge](3)--(5);
        \draw[Edge](5)--(6);
    \end{tikzpicture}\,, \enspace
    \begin{tikzpicture}
        [baseline=(current bounding box.center),xscale=.45,yscale=.42]
        \node[Vertex](1)at(0,0){\begin{math}1\end{math}};
        \node[Vertex](2)at(0,-1){\begin{math}2\end{math}};
        \node[Vertex](3)at(-1,-2){\begin{math}3\end{math}};
        \node[Vertex](4)at(1,-2){\begin{math}4\end{math}};
        \node[Vertex](5)at(.5,-3){\begin{math}5\end{math}};
        \node[Vertex](6)at(1.5,-3){\begin{math}6\end{math}};
        \draw[Edge](1)--(2);
        \draw[Edge](2)--(3);
        \draw[Edge](2)--(4);
        \draw[Edge](4)--(5);
        \draw[Edge](4)--(6);
    \end{tikzpicture}\,\right),
\end{equation}
the map $\phi : \As(\Pca^\perp) \to \As(\Pca)^!$ defined in the
statement of Theorem~\ref{thm:isomorphism_thin_forests_dual_operad}
satisfies
\begin{subequations}
\begin{equation}
    \phi(\Op_1) = \OpDual_1,
\end{equation}
\begin{equation}
    \phi(\Op_2) = \OpDual_1 + \OpDual_2,
\end{equation}
\begin{equation}
    \phi(\Op_3) = \OpDual_1 + \OpDual_2 + \OpDual_3 + \OpDual_4
        + \OpDual_5 + \OpDual_6,
\end{equation}
\begin{equation}
    \phi(\Op_4) = \OpDual_1 + \OpDual_2 + \OpDual_4,
\end{equation}
\begin{equation}
    \phi(\Op_5) = \OpDual_1 + \OpDual_2 + \OpDual_4 + \OpDual_5
        + \OpDual_6,
\end{equation}
\begin{equation}
    \phi(\Op_6) = \OpDual_1 + \OpDual_2 + \OpDual_4 + \OpDual_6.
\end{equation}
\end{subequations}
\medskip

Moreover, by considering the opposite pair of thin posets forests in
duality
\begin{equation}
    \left(\Pca, \Pca^\perp\right) =
    \left(
    \begin{tikzpicture}
        [baseline=(current bounding box.center),xscale=.45,yscale=.42]
        \node[Vertex](1)at(0,0){\begin{math}1\end{math}};
        \node[Vertex](2)at(0,-1){\begin{math}2\end{math}};
        \node[Vertex](3)at(-1,-2){\begin{math}3\end{math}};
        \node[Vertex](4)at(1,-2){\begin{math}4\end{math}};
        \node[Vertex](5)at(.5,-3){\begin{math}5\end{math}};
        \node[Vertex](6)at(1.5,-3){\begin{math}6\end{math}};
        \draw[Edge](1)--(2);
        \draw[Edge](2)--(3);
        \draw[Edge](2)--(4);
        \draw[Edge](4)--(5);
        \draw[Edge](4)--(6);
    \end{tikzpicture}\,, \enspace
    \begin{tikzpicture}
        [baseline=(current bounding box.center),xscale=.45,yscale=.42]
        \node[Vertex](1)at(0,0){\begin{math}1\end{math}};
        \node[Vertex](2)at(1,0){\begin{math}2\end{math}};
        \node[Vertex](3)at(2.75,0){\begin{math}3\end{math}};
        \node[Vertex](4)at(2.25,-1){\begin{math}4\end{math}};
        \node[Vertex](5)at(3.25,-1){\begin{math}5\end{math}};
        \node[Vertex](6)at(3.25,-2){\begin{math}6\end{math}};
        \draw[Edge](3)--(4);
        \draw[Edge](3)--(5);
        \draw[Edge](5)--(6);
    \end{tikzpicture}\,\right),
\end{equation}
the map $\phi : \As(\Pca^\perp) \to \As(\Pca)^!$ defined in the
statement of Theorem~\ref{thm:isomorphism_thin_forests_dual_operad}
satisfies
\begin{subequations}
\begin{equation}
    \phi(\Op_1) = \OpDual_1 + \OpDual_2 + \OpDual_3 + \OpDual_4
        + \OpDual_5 + \OpDual_6,
\end{equation}
\begin{equation}
    \phi(\Op_2) = \OpDual_2 + \OpDual_3 + \OpDual_4
        + \OpDual_5 + \OpDual_6,
\end{equation}
\begin{equation}
    \phi(\Op_3) = \OpDual_3,
\end{equation}
\begin{equation}
    \phi(\Op_4) = \OpDual_3 + \OpDual_4 + \OpDual_5 + \OpDual_6,
\end{equation}
\begin{equation}
    \phi(\Op_5) = \OpDual_3 + \OpDual_5,
\end{equation}
\begin{equation}
    \phi(\Op_6) = \OpDual_3 + \OpDual_5 + \OpDual_6.
\end{equation}
\end{subequations}
\medskip

Notice also that since the dual of the total order $\Pca$ on a set of
$\ell \geq 0$ elements is the trivial order $\Pca^\perp$ on the same set,
by Theorem~\ref{thm:isomorphism_thin_forests_dual_operad}, $\As(\Pca)$
is the Koszul dual of $\As(\Pca^\perp)$. This is coherent with the
results of~\cite{Gir15b} about the multiassociative operad (equal to
$\As(\Pca)$) and the dual multiassociative operad (equal
to~$\As(\Pca^\perp)$).
\medskip

\section*{Conclusion and remaining questions}
Through this work, we have presented a functorial construction $\As$
from posets to operads establishing a link between the two underlying
categories. The operads obtained through this construction generalize
the (dual) multiassociative operads introduced in~\cite{Gir15b}. As we
have seen, some combinatorial properties of the starting posets $\Pca$
imply properties on the obtained operads $\As(\Pca)$ as, among others,
basicity and Koszulity.
\medskip

This work raises several questions. We have presented two classes of
$\Pca$-associative algebras: the free $\Pca$-associative algebras over
one generator where $\Pca$ are forest posets and a polynomial algebra
involving the antichains of a poset $\Pca$. The question to define
free $\Pca$-associative algebras over one generator with no assumption
on $\Pca$ is open. Also, the question to define some other interesting
$\Pca$-associative algebras has not been considered in this work and
deserves to be addressed.
\medskip

Besides, we have shown that when $\Pca$ is a forest poset, $\As(\Pca)$
is Koszul. The property of being a forest poset for $\Pca$ is only a
sufficient condition for the Koszulity of $\As(\Pca)$ and the question
to find a necessary condition is worthwhile. Notice that the strategy to
prove the Koszulity of an operad by the partition poset
method~\cite{MY91,Val07} (see also~\cite{LV12}) cannot be applied to our
context. Indeed, this strategy applies only on basic operads and we have
shown that almost all operads $\As(\Pca)$ are not basic.
\medskip

\bibliographystyle{alpha}
\bibliography{Bibliography}

\end{document}